
%


%

%

%



\documentclass[preprint]{elsarticle}
\usepackage{amssymb,enumerate, amsmath}
\usepackage{cancel}
\usepackage{graphicx}
\usepackage{multirow}
\usepackage{longtable}
\usepackage{setspace}
\usepackage{epsfig,latexsym,graphics,bm,pifont,subfigure}
\usepackage{dcolumn}
\usepackage{afterpage}
\usepackage{color}

\setcounter{MaxMatrixCols}{10}

        \newtheorem{thm}{Theorem}[section]
	\newtheorem{prop}[thm]{Proposition}
	\newtheorem{cor}[thm]{Corollary}
	
	\newtheorem{lem}[thm]{Lemma}
	\newtheorem{rem}[thm]{Remark}
	\newtheorem{mydef}[thm]{Definition}
	\newproof{proof}{Proof}
         \newproof{pot}{Proof of Theorem \ref{thm2}}
	
	\newcommand{\vectornorm}[1]{\parallel#1\parallel}

\newcommand{\Grad}{\nabla}

\newcommand{\Div}{\nabla \cdot}
\newcommand{\Gradh}{\nabla_{h}}
\newcommand{\Divh}{\nabla_{h} \cdot}

\newcommand{\bu}{{\bf u}}

\newcommand{\bv}{{\bf v}}
  
\newcommand{\br}{{\bf r}}
\newcommand{\bg}{{\bf g}}

    \newcommand{\cip}[2]{\left( #1 \middle| #2 \right)}
    \newcommand{\eipns}[2]{\left[ #1 \middle\| #2 \right]_{\rm ns}}
    \newcommand{\eipew}[2]{\left[ #1 \middle\| #2 \right]_{\rm ew}}
    
    \newcommand{\ciptwo}[2]{\left( #1 \middle\| #2 \right)}
     \newcommand{\hf}{\frac{1}{2}}
      \newcommand{\Dh}{\Delta_h}

\begin{document}

\begin{frontmatter}
\title{ Energy Stable Multigrid Method for Local and Non-local Hydrodynamic Models for Freezing}
          
\author[uci]{Arvind Baskaran}
\ead{baskaran@math.uci.edu}

\author[uci]{Zhen Guan}
\ead{guanz2@math.uci.edu}

\author[uci]{John Lowengrub}
\ead{lowengrb@math.uci.edu}

\address[uci]{Department of Mathematics, University of California Irvine, Irvine, CA 92697 USA}





\begin{abstract}
In this paper we present a numerical method for hydrodynamic models that arise from time dependent density functional theories of freezing.
The models take the form of compressible Navier-Stokes equations whose pressure is determined by the variational derivative 
of a free energy, which is a functional of the density field.
We present unconditionally energy stable and mass conserving implicit finite difference methods for the models.
The methods are based on a convex splitting of the free energy and that ensures that a discrete energy is non-increasing
for any choice of time and space step. The methods are applicable to a large class of models, including
both local and non-local free energy functionals. The theoretical basis for the numerical method is presented in a general context. 
The method is applied to problems using two specific free energy functionals: one local and one non-local functional.
A nonlinear multigrid method is used to solve the numerical method, which is nonlinear at the implicit time step.
The non-local functional, which is a convolution operator, is approximated using the Discrete Fourier Transform.
Numerical simulations that confirm the stability  and accuracy of the numerical method are presented. 
\end{abstract}
\begin{keyword}
Classical Density Functional Theory \sep Phase Field Crystal \sep Compressible Navier-Stokes \sep Convex Splitting, Finite Difference Methods \sep Energy stability
\end{keyword}

\end{frontmatter}


\section{Introduction}
Solid liquid phase transitions are of great scientific interest.
The equilibrium properties of this physical process are fairly well understood in
the context of classical density functional theory (CDFT) of freezing \cite{Hansen2006,Lutsko2010}.
This theory characterizes the equilibrium state of a pairwise interacting set of particles 
in terms of the one particle density field.
The density function $\rho$ represents the spatial distribution of particles, i.e., the probability of finding a particle
at some point in space.
This function at equilibrium is represented as a minimizer of a free energy, which in turn is a functional of the density.
At equilibrium this function admits two forms of solutions corresponding to a homogeneous distribution known
as the liquid phase and the inhomogeneous distribution known as the solid phase.
The inhomogeneous distribution typically consists of peaks located on an ordered Bravais lattice
representing the probable locations of atoms.
This approach is attractive for the reason that the equilibrium solid phase carries the information
about the lattice symmetries.
A non-equilibrium time dependent theory that characterizes the non-equilibrium distribution has the 
potential to capture lattice-dependent anisotropic effects such as defect formation and microstructure evolution.
Such models have many potential applications in the area of materials modeling.
Consequently, the development of time dependent models has been the focus of recent research \cite{Marconi1999,2012Kaliadasis,Espanol2009,Yoshimori2005,Archer2009, Lutsko2012,Chavanis2011,Baskaran2014}.

Given a free energy functional $\mathcal{F}[\rho]$ the simplest dynamical equations for the evolution of the density field
 are to use gradient dynamics on the free energy surface \cite{Marconi1999,2012Kaliadasis,Espanol2009,Yoshimori2005}.
While this approach is appealing it has several shortcomings including the inability to capture the inertial effects and the effect of flow on the 
phase transition. Hydrodynamic coupling has been incorporated in models involving colloidal suspensions \cite{Archer2009, Lutsko2012} and in models that describe the
freezing of a dense hard sphere gas \cite{Baskaran2014}. 
%
%
%
In all these approaches the hydrodynamic model takes the form of a compressible Navier-Stokes-like equations where the 
pressure ($p$) is defined by the equation of state $\Grad p := \rho \Grad \frac{\delta \mathcal{F}}{\delta \rho }$.
This can be easily seen from a thermodynamic point of view through the Gibbs Duhem relation $dp = \rho d \mu$ where
$\mu$ is the chemical potential taken to be the variational derivative of the free energy \cite{Lutsko2012}.
Models for hydrodynamic coupling with solid-liquid phase transitions thus take the form
\begin{equation}
\label{Eqn:Hydro_damp}
\begin{array}{ll}
\displaystyle\partial _{t}\rho +\nabla \cdot \left(\rho \mathbf{u}\right)
& =0, \\
\displaystyle\partial _{t}(\rho \mathbf{u})+\nabla \cdot
(\rho \mathbf{u}\otimes \mathbf{u}) & =\displaystyle-\rho \nabla  \left( \frac{\delta \mathcal{F}}{\delta \rho }\right) +   \gamma \Grad^2 \bu ,
\end{array}
\end{equation}%
where $\rho(\br,t) : \Omega \times [0,\infty) \to \mathbb{R}^d$, $\bu (\br,t) : \Omega\times [0,\infty) \to \mathbb{R}^d$,  $\Omega \subset \mathbb{R}^d $, $\mathcal{F}[\rho]$ is a functional of $\rho$  which may be local or non-local in nature, $\gamma  >0$ is  a viscosity coefficient and $d$ is the dimension of the system.
It is easy to see that the model in Eq. (\ref{Eqn:Hydro_damp}) admits an energy 
\begin{equation}
\label{total_energy}
\mathcal{E} [\rho, \bu] = \frac{1}{2} \int_{\Omega} \rho \mid \bu \mid^2 d \br + \mathcal{F}[\rho],
\end{equation}
such that
\begin{equation}
\label{energy_derivative}
\frac{d \mathcal{E}}{dt} \leq 0.
\end{equation}
 The proof of Eq. (\ref{energy_derivative})  is presented in \ref{general_energy}.
The long time equilibrium for the above system corresponds to $\delta_{\rho} \mathcal{F}[\rho] =0, \bu =0$.
Thus the equilibrium properties of the system are determined by the functional $\mathcal{F}[\rho]$ which represents the free energy functional.
The specific functional form of the free energy $\mathcal{F}[\rho]$ determines the theory and the phase diagram 
of the model. One must appeal to CDFT or a similar approximation for the specific form of the free energy.

It is worth noting that the time dynamics of the system however is much richer than a standard gradient descent model \cite{Baskaran2014}.
The first set of simulations using this type of model was presented in \cite{Baskaran2014}. It was observed that while the total energy $\mathcal{E}[\rho,\bu]$
and the free energy $\mathcal{F}[\rho]$ were non-increasing functions of time, the kinetic energy and the individual components
of the free energy were not. However no details about the numerical method were given in \cite{Baskaran2014}. Here, we present the first
energy-stable method to solve such coupled systems.

The physical properties and relevance of the model lies in the free energy functional $\mathcal{F}[\rho]$ used.
A large number of models for the free energy exist in literature.
These models include CDFT \cite{Hansen2006,Lutsko2010}
derived in the framework of equilibrium statistical mechanics, as well as 
phase field crystal (PFC) models \cite{PFC_book},
which are derived using phenomenological theories motivated by CDFT.
The energy in CDFT is typically a non-local functional of the density while the energy in the PFC is a weakly non-local 
functional that depends only on gradients of the density.  In addition, in CDFT the ideal gas part of
the free energy contains a logarithm. In CDFT, the combination of the logarithmic part of the free energy with the nonlocal interaction energy induces much sharper peaks in the density field than the peaks obtained using the PFC model where the logarithmic term is replaced by a polynomial and a gradient approximation is used for the interaction energy (as described in Appendix B). Thus, the CDFT system requires a finer grid to resolve the dynamics.

These free energy models have been very popular and capture phase transitions at  the atomic length scales,
but on diffusion time scales, thus enabling the simulation of microstructure evolution at much longer time scales
than can be captured using molecular dynamics or other stochastic approaches \cite{PFC_book}.
%
This makes the hydrodynamic theory considered here an extremely valuable tool in investigating non-equilibrium
effects such as the effect of flow on the phase transition.
Thus developing an efficient and stable numerical method for this class of models is of critical importance.

In this work we use the convex splitting framework introduced  by Eyre \cite{Eyre} to develop stable numerical schemes.
The convex splitting framework has been successfully applied to the PFC-based models \cite {WISE2009,WISE2009_2,MPFC2011,MPFC2013,MPFC2013_2}
and to non-local CDFT-like models \cite{Zhen1,Zhen2}.
However these approaches consider conserved gradient descent models without hydrodynamics.
The current literature on hydrodynamic models driven by free energy gradients
mainly considers incompressible multiphase fluids using the Cahn-Hilliard free energy (see \cite{Kim2012} for a comprehensive review).
We note that in \cite{Axel2015}, the incompressible Navier-Stokes system was coupled with the PFC model.
A similar approach using a phenomenological PFC-like free energy driven hydrodynamic model was presented in \cite{Laszlo2014}
however no numerical methods were presented. While there has been much work in the literature on the development of numerical methods for compressible fluids driven by van der Waals and Cahn-Hilliard-like free energies, to the best of our knowledge the development of accurate and energy-stable numerical methods for the case of compressible fluids driven by  gradients of PFC and CDFT free energies, which are higher order and nonlocal (CDFT),  has not been addressed.

The paper is structured as follows. In Section \ref{section:free_energy} we present the description of the model 
and the associated free energies. In Section \ref{section:CS} we present the basic ideas of convex splitting 
and outline the need for a comprehensive treatment for the various free energy models discussed in this work. This section
also presents the basic energy estimates required to develop an unconditionally energy stable method for 
the hydrodynamic theory of freezing. An unconditionally energy stable conservative discretization of time 
is presented in Section \ref{section:method_semi},  where the properties of scheme are proven. The corresponding 
fully discrete numerical method is presented in Section \ref{section:method_full}.
Finally the numerical results are presented in Section \ref{section:numerical}, where the properties of the numerical method 
are explored numerically. In Section \ref{section:conclusion}, conclusions are drawn and future work is discussed.
Details of the model derivation, definitions of the discrete operators, and the nonlinear multigrid method used
in the simulations are presented in the
Appendices.




\section{Hydrodynamic Model and the Associated Free Energies} \label{section:free_energy}
In the reminder of the paper we will restrict ourselves to 2 dimensions.
However we note that the ideas, numerical methods, theorems and proofs extend to 3 dimensions in a straightforward manner.
We will consider a spatial domain $\Omega : = [0,L_x) \times [0, L_y)$ with periodic boundary conditions.
The results however extend to other boundary conditions such as homogenous Dirichlet (no-slip) solid wall boundary conditions and Neumann boundary conditions
 (see \ref{BC}).
The hydrodynamic equations Eq. (\ref{Eqn:Hydro_damp}) are first rewritten in the primitive variable form for convenience as follows:
\begin{eqnarray}
\label{hydro_prim}
\displaystyle\partial _{t}\rho +\Grad \cdot \left(\rho \mathbf{u}\right) =0, \\
\displaystyle \rho \left( \partial _{t} \mathbf{u} + \bu \cdot \Grad \bu \right) =\displaystyle-\rho 
\Grad \left( \frac{\delta \mathcal{F}}{\delta \rho }\right) + \gamma \Div \mathcal{D}.
\end{eqnarray}%
 The dissipation tensor $\mathcal{D}$ is defined as
\begin{equation}
\mathcal{D} := \left( \Grad \bu + \Grad \bu^{T} \right) - ( \Div \bu ) \mathbf{I},
\end{equation}
 where $\mathbf{I}$ is the identity tensor of rank $2$. 
In this work we will consider the following two specific models for the free energy.
\subsection{Classical Density Functional Theory}
Classical density functional theory typically relies on a free energy that is a non-local functional of 
the density $\rho$.
A typical form for the free energy is given by
\begin{equation}
\mathcal{F}_{CDFT} [ \rho ] = \mathcal{F}_{id} [ \rho] + \mathcal{F}_{ex} [\rho],
\label{energy_DFT}
\end{equation}
where
\begin{equation}
\mathcal{F}_{id} [ \rho ] := \int_{\Omega} \rho (\ln ( \rho) - 1) d \br,
\end{equation}
is the ideal gas part of the free energy (e.g., describes mixing)
and 
\begin{equation}
\mathcal{F}_{ex} [ \rho ] := -\frac{1}{2} \int_{\Omega} \rho ( J * \rho) d \br,
\end{equation}
which is the excess free energy functional (e.g., describes interactions), with
\begin{equation}
(J * \rho) := \int_{\Omega} J({\bf x} - {\bf y}) \rho( {\bf y} ) d {\bf y}.
\end{equation}
The convolution kernel $J$ represents different quantities in different approximations of 
the density functional theory. For example, in the context of the local density approximation 
 $J(\br - \br') = \tilde{J}( \mid \br - \br' \mid)$, represents the pairwise inter-atomic  potential energy associated with
two particles that located at $\br$ and $\br'$ respectively (see \cite{Hansen2006}). 
In the context of the Ramakrishnan-Yousseff density functional theory the quantity $J$ represents the two particle
direct correlation function \cite{Ramakrishnan1979}. 
While the convolution kernel has different physical meanings for different physical approximations the form of the 
excess free energy is consistent across a large class of density functional theories and    
is worth addressing in general.

For the reminder of this paper we will assume that 
the convolution kernel $J$ is a sufficiently regular and  $\Omega$-periodic function such that
\begin{itemize}
\item $J = J_c - J_e$, where $J_c, J_e $ are sufficiently regular, $\Omega$-periodic and  pointwise non-negative.
\item $J_c$ and $J_e$ are even, i.e.,  $J_{\alpha} (- \br ) = J_{\alpha} (\br)$ where $\br \in \mathbb{R}^2$, $\alpha = c,e$.
\item $\int_{\Omega} J(\br) d \br <0$.
\end{itemize}
It is worth noting that the above assumptions are in general true. For example, the nontrivial assumption that the kernel is even 
follows from radial symmetry of the interaction potentials for both the local density approximation and the Ramakrishnan-Yousseff formalism \cite{Hansen2006}.

This free energy satisfies a convex splitting of the form  $\mathcal{F}_{CDFT} : = \mathcal{F}_{CDFT,c} - \mathcal{F}_{CDFT,e}$
where $\mathcal{F}_{CDFT,c}$ and $\mathcal{F}_{CDFT,e}$ are convex functionals defined as:

\begin{equation}
\begin{array}{c}
\displaystyle \mathcal{F}_{CDFT,c} [ \rho ] := \int_{\Omega}  \left\{ \rho ( \ln \rho -1) + (J_e * 1) \rho^2  \right\} d\br  ;\\
\displaystyle \mathcal{F}_{CDFT,e} [ \rho ] := \int_{\Omega}  \left\{   \frac{1}{2} \rho ( J * \rho)  + (J_e * 1) \rho^2 \right\}d \br .
\end{array}
\label{DDFT_CS}
\end{equation}  
Here we say a functional $\mathcal{F}[\rho]$  is convex in a Hilbert space $H$ (consisting of sufficiently regular functions) if for any
$\rho \in H$ that $\lim_{\epsilon \to 0} \frac{d^2 }{d \epsilon^2} \mathcal{F}[\rho +\epsilon v] \geq 0$ for all $v \in H$.
This convex splitting is not unique and other forms of convex splitting can be obtained (see \cite{Zhen1,Zhen2}).
This specific convex splitting is similar to the one proposed in \cite{Zhen1} as it has certain advantages, 
which will become apparent in the later sections.
It is worth noting that in \cite{Zhen1} the authors considered the case where $(J*1)>0$.
However in CDFT $(J *1)$ is usually negative as assumed here. 
For example in the Ramakrishnan-Youssef \cite{Ramakrishnan1979} CDFT model
$(J*1)$ is related to the isothermal compressibility of the liquid and is negative \cite{Hansen2006}.




\subsection{Phase Field Crystal Model}
The phase field crystal (PFC) approximation was first proposed by Elder et al \cite{Elder2004}. While the PFC was first derived using a phenomenological approach, the PFC has since been 
re-interpreted as an approximation to dynamic density functional theory (DDFT), which is a time-dependent counterpart
to CDFT \cite{Elder2004,VanTeeffelen2009,Akusti,PFC_book}. 
Typically, PFC formulations use a scaled version of the deviation of the density from a reference density rather than the density field itself (see \ref{PFC_derivation}).  As a result, the density is not always non-negative in PFC models. Here, we use a formulation of the PFC that uses the particle density field directly and maintains its non-negativity.

%
%
%
%
The PFC energy functional we consider is given by
\begin{equation}
\mathcal{F}_{PFC} ( \rho ) = \int_{\Omega} \left \{ \frac{1}{12} \left( \rho-\frac{3}{2} \right)^4 + \frac{\alpha}{2} \left( \rho-\frac{3}{2} \right)^2 - |\nabla \rho |^2 + \frac{1}{2} (\Delta \rho)^2 \right \} d\mathbf{r} ,
\label{energy}
\end{equation}
with
\begin{equation}
\frac{\delta \mathcal{F}_{PFC}}{\delta \rho}    =  \frac{1}{3}\left( \rho-\frac{3}{2} \right)^3 + \alpha \left( \rho-\frac{3}{2} \right) + 2 \Delta \rho + \Delta^2 \rho ,
\label{energy_PFC}
\end{equation}
where $\alpha$ is a free parameter that determines the strength of interactions between the particles and hence the elastic constants of the solid phase. 
We refer the reader to \ref{PFC_derivation} for details regarding the relation of the above free energy 
and more standard formulations of the PFC energy.
We note that this free energy admits a convex splitting of the form $\mathcal{F}_{PFC} : = \mathcal{F}_{PFC,c} - \mathcal{F}_{PFC,e}$,
where $\mathcal{F}_{PFC,c}$ and $\mathcal{F}_{PFC,e}$ are convex functional defined as:
\begin{equation}
\mathcal{F}_{PFC,c} [ \rho ] := \int_{\Omega} \left \{ \frac{1}{12} \left( \rho-\frac{3}{2} \right)^4 + \frac{\alpha}{2} \left( \rho-\frac{3}{2} \right)^2 + \frac{1}{2} (\Delta \rho)^2 \right \} d\mathbf{r},
\label{CS_PFC1}
\end{equation}
 and
\begin{equation}
\mathcal{F}_{PFC,e} [ \rho ] := \int_{\Omega}   \mid \nabla \rho \mid^2 d\mathbf{r}.
\label{CS_PFC2}
\end{equation}

 

 
 \section{General Framework of the Convex Splitting Schemes}\label{section:CS}
In this section we discuss in considerable generality the ideas involved in convex splitting schemes for the class of models 
described so far.
%
Previously, Wise et al in \cite{WISE2009} considered  a class of local free energies like the PFC model that satisfies the form 
$\mathcal{F} : = \mathcal{F}_c - \mathcal{F}_e$, where $\mathcal{F}_e [\rho]$ and $\mathcal{F}_c [\rho]$ are convex functionals of $\rho$ with a specific 
functional form.
It was assumed that  $\mathcal{F}_e [\rho]$ and $\mathcal{F}_c [\rho]$ are integrals of energy densities $f_{c}(\rho, \partial_x \rho, \partial_y \rho, \Delta \rho)$ and $f_{e}(\rho, \partial_x \rho, \partial_y \rho, \Delta \rho)$
respectively. It was further assumed in \cite{WISE2009} that the functions $f_c$ and $f_e$ are convex in their arguments respectively.
The authors derived an energy estimate for this special case as summarized in the following theorem.
	\begin{thm}
	\label{cor_estimate3}
	Given a free energy $\mathcal{F}[\rho]$ that admits a convex splitting of the form  
	$\mathcal{F} : = \mathcal{F}_c - \mathcal{F}_e$ where $\mathcal{F}_e [\rho]$ and $\mathcal{F}_c [\rho]$ are convex functionals of $\rho$
	of the form	
         \begin{equation}
	\mathcal{F}_c[\rho] = \int_{\Omega} f_{c}(\rho, \partial_x \rho, \partial_y \rho, \Delta \rho)
	\label{energydensity_form1}
	\end{equation}
	and 
	\begin{equation}
	\mathcal{F}_e[\rho] = \int_{\Omega} f_{e}(\rho, \partial_x \rho, \partial_y \rho, \Delta \rho)
	\label{energydensity_form2}
	\end{equation}
	such that $f_c$ and $f_e$ are convex in their arguments.
	 Further if $\rho$ is sufficiently  regular and periodic in $\Omega$ and $\partial_x \rho, \partial_y \rho$ are also periodic in $\Omega$, then
	\begin{equation}
	\mathcal{F}[\phi] - \mathcal{F}[\psi] \leq ( \delta_{\phi} \mathcal{F}_c[\phi] - \delta_{\psi} \mathcal{F}_e[\psi],\phi - \psi)_2   ,       
	\label{ineq_CS2}
	\end{equation}
	where $(\cdot,\cdot)_2$ is the usual $L_2$ inner product and $\delta_{\rho}$ represents the variational derivative with respect to $\rho$.
	\end{thm}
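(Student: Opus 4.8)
The plan is to reduce the functional inequality (\ref{ineq_CS2}) to the elementary tangent-line inequality for scalar convex functions, applied along the straight segment joining $\psi$ to $\phi$. Writing $w := \phi - \psi$, I would introduce the two real functions $g(s) := \mathcal{F}_c[\psi + s w]$ and $h(s) := \mathcal{F}_e[\psi + s w]$ for $s \in [0,1]$, so that $\mathcal{F}[\phi] - \mathcal{F}[\psi] = \big(g(1) - g(0)\big) - \big(h(1) - h(0)\big)$. The whole argument then rests on showing that $g$ and $h$ are convex and on identifying their derivatives with $L^2$ pairings.

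First I would establish convexity of $g$ and $h$. Applying the paper's definition of a convex functional at the base point $\rho = \psi + s w$ with the fixed direction $v = w$ gives $g''(s) = \lim_{\epsilon \to 0} \frac{d^2}{d\epsilon^2}\mathcal{F}_c[\psi + s w + \epsilon w] \geq 0$ for every $s \in [0,1]$, and likewise $h''(s) \geq 0$; the assumed convexity of the energy densities $f_c$ and $f_e$ in their arguments is precisely what secures this nonnegativity. With $g, h$ scalar convex functions, the tangent-line inequalities are immediate: expanding $g$ about $s = 1$ yields $g(0) \geq g(1) - g'(1)$, i.e. $g(1) - g(0) \leq g'(1)$, while expanding $h$ about $s = 0$ yields $h(1) \geq h(0) + h'(0)$, i.e. $h(1) - h(0) \geq h'(0)$. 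Subtracting gives $\mathcal{F}[\phi] - \mathcal{F}[\psi] \leq g'(1) - h'(0)$, and the result follows once the two derivatives are recognized as $(\delta_\phi \mathcal{F}_c[\phi], w)_2$ and $(\delta_\psi \mathcal{F}_e[\psi], w)_2$.

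The step I expect to be the real obstacle is that last identification, i.e. verifying $g'(1) = (\delta_\phi \mathcal{F}_c[\phi], \phi - \psi)_2$ and $h'(0) = (\delta_\psi \mathcal{F}_e[\psi], \phi - \psi)_2$. By the chain rule $g'(s)$ is the Gateaux derivative of $\mathcal{F}_c$ at $\psi + s w$ in the direction $w$; differentiating under the integral sign through the energy-density form $f_c(\rho, \partial_x \rho, \partial_y \rho, \Delta \rho)$ produces four terms, paired against $w$, $\partial_x w$, $\partial_y w$ and $\Delta w$ respectively. To collapse these into a single $L^2$ pairing with the variational derivative $\delta_\rho \mathcal{F}_c$ I would integrate by parts, once for the two gradient terms and twice for the Laplacian term. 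This is exactly where the regularity of $\rho$ together with the periodicity of $\rho$, $\partial_x \rho$ and $\partial_y \rho$ over $\Omega$ enters: it forces every boundary contribution on $\partial\Omega$ to cancel, leaving the clean identity $g'(s) = (\delta_\rho \mathcal{F}_c[\psi + s w], w)_2$ for all $s$. Evaluating at $s = 1$ and, for $h$, at $s = 0$, then combining with the tangent-line bounds above, completes the proof of (\ref{ineq_CS2}).
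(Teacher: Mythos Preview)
Your proof is correct and follows essentially the same route as the paper. The paper factors the argument through a more general statement (Theorem~\ref{thm_estimate2}), first proving the one-sided tangent-line bound $\mathcal{F}_\alpha[\phi]-\mathcal{F}_\alpha[\psi]\ge(\delta_\psi\mathcal{F}_\alpha[\psi],\phi-\psi)_2$ for any functional satisfying $\frac{d^2}{d\epsilon^2}\mathcal{F}_\alpha[\rho+\epsilon v]\ge0$ globally in $\epsilon$, and then observing that convexity of $f_c,f_e$ in their arguments supplies exactly this hypothesis; your $g(s),h(s)$ construction, scalar convexity, and tangent-line inequalities are the same computation carried out directly for the two pieces at once.
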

	
\begin{rem} \label{time_estimate}
Note that the estimate Eq. (\ref{ineq_CS2}) can be immediately translated into an energy estimate. 
Consider a discretization of time $t^k \in [0,T]$ such that $t^k := 0 + k s$ where $s>0$, 
$s \in \mathbb{R}$ and $k \in \mathbb{Z}^+$.
Then we have the energy decay estimate 
\begin{equation}
\mathcal{F}[\rho^{k+1}] - \mathcal{F}[\rho^k] \leq ( \delta_{\rho} \mathcal{F}_c[\rho^{k+1}] - \delta_{\rho} \mathcal{F}_e[\rho^k],\rho^{k+1} - \rho^k)_2,
\end{equation}
where $\rho^k : = \rho({\bf x},t^k)$.
This energy decay estimate can be used to show energy stability for convex splitting schemes.

\end{rem}	
The energy estimate however is limited to the class of free energy functionals considered in Theorem \ref{cor_estimate3}.
The CDFT free energy presented in Eq. (\ref{energy_DFT}), which is a nonlocal functional of the density field, is an example
of a free energy functional out of the scope of Theorem \ref{cor_estimate3}.
Nonlocal free energies were considered in \cite{Zhen1,Zhen2}, where a convex splitting scheme was presented taking advantage 
of the specific form of the CDFT free energy functional.

In what follows we consider a general approach to convex splitting schemes and the associated energy decay estimates
that seamlessly include a much larger class of free energies including the PFC and CDFT.
Before we proceed further we define the notion of a proper convex splitting:
\begin{mydef}
\label{PCS}
Consider a functional $\mathcal{F}[\rho] : H \subset L_{per}^2(\Omega) \to \mathbb{R}$, where $H \subset L_{per}^2(\Omega)$ is a Hilbert space of sufficiently regular periodic functions 
$\rho: \Omega \to \mathbb{R}$ with $\Omega = [0,L_x) \times [0,L_y)$.
$F[\rho]$ is said to admit a proper convex splitting  if $\mathcal{F} : = \mathcal{F}_c - \mathcal{F}_e$ such that
\begin{equation}
\frac{d^2}{d \epsilon^2} \mathcal{F}_\alpha [ \rho +\epsilon v] \geq 0 \qquad \forall \rho, v \in H, \epsilon \in \mathbb{R}, \alpha \in \{c,e\}.
\label{condition2}
\end{equation}
\end{mydef}
Note that the definition above is stronger than a simple convex splitting, where $\mathcal{F}_c$ and $\mathcal{F}_e$ are convex functionals, i.e.,
\begin{equation}
\lim_{\epsilon \to 0}\frac{d^2}{d \epsilon^2} \mathcal{F}_\alpha [ \rho +\epsilon v] \geq 0 \qquad \forall \rho, v \in H, \epsilon \in \mathbb{R}, \alpha \in \{c,e\}.
\label{condition22a}
\end{equation}
If $\mathcal{F} : = \mathcal{F}_c - \mathcal{F}_e$ is a proper convex splitting of $\mathcal{F}$ then the $\mathcal{F}_c$ and $\mathcal{F}_e$ are indeed convex functionals.
However the converse is not necessarily true.
Taking advantage of the proper convex splitting of $\mathcal{F}$ we can obtain the energy decay estimate presented in Theorem \ref{cor_estimate3}.

	\begin{thm}
	\label{thm_estimate2}
	Given a free energy $\mathcal{F}[\rho]$ that admits a proper convex splitting of the form  
	$\mathcal{F} : = \mathcal{F}_c - \mathcal{F}_e$, where $\mathcal{F}_e [\rho]$ and $\mathcal{F}_c [\rho]$ are  functionals of $\rho$
         such that 	
         \begin{equation}
	\frac{d^2}{d \epsilon^2} \mathcal{F}_\alpha [ \rho +\epsilon v] \geq 0 \qquad \forall \rho, v \in H, \epsilon \in \mathbb{R}, \alpha \in \{ c,e \},
	\end{equation}
	 then
	\begin{equation}
	\mathcal{F}[\phi] - \mathcal{F}[\psi] \leq ( \delta_{\phi} \mathcal{F}_c[\phi] - \delta_{\psi} \mathcal{F}_e[\psi],\phi - \psi)_2,          
	\label{ineq_CS}
	\end{equation}
	where $(\cdot,\cdot)_2$ is the usual $L_2$ inner product and $\delta_{\rho}$ represents the variational derivative with respect to $\rho$.
	\end{thm}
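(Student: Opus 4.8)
The plan is to reduce the functional inequality (\ref{ineq_CS}) to a pair of elementary one-dimensional convexity (tangent-line) inequalities, one for $\mathcal{F}_c$ and one for $\mathcal{F}_e$, and then recombine them. First I would fix $\psi, \phi \in H$ and introduce the straight-line segment joining them, $\rho(\epsilon) := \psi + \epsilon(\phi - \psi)$ for $\epsilon \in [0,1]$, so that $\rho(0) = \psi$ and $\rho(1) = \phi$. For $\alpha \in \{c,e\}$ I would then define the scalar function $g_\alpha(\epsilon) := \mathcal{F}_\alpha[\rho(\epsilon)]$ and compute its first derivative via the chain rule for the variational derivative, obtaining $g_\alpha'(\epsilon) = (\delta_{\rho(\epsilon)} \mathcal{F}_\alpha[\rho(\epsilon)], \phi - \psi)_2$ since $\frac{d\rho}{d\epsilon} = \phi - \psi$. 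In particular $g_\alpha'(0) = (\delta_\psi \mathcal{F}_\alpha[\psi], \phi - \psi)_2$ and $g_\alpha'(1) = (\delta_\phi \mathcal{F}_\alpha[\phi], \phi - \psi)_2$, which are precisely the two terms that appear on the right-hand side of (\ref{ineq_CS}).

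The crucial observation is that the proper convex splitting hypothesis (\ref{condition2}), applied with the single choice $\rho = \psi$ and $v = \phi - \psi$, states exactly that $g_\alpha''(\epsilon) = \frac{d^2}{d\epsilon^2} \mathcal{F}_\alpha[\psi + \epsilon(\phi - \psi)] \geq 0$ for all $\epsilon \in \mathbb{R}$; hence each $g_\alpha$ is a convex function of the real variable $\epsilon$ on the whole interval $[0,1]$. This is where the distinction between a proper convex splitting (\ref{condition2}) and a merely pointwise convex splitting (\ref{condition22a}) becomes essential: I need the second derivative to be nonnegative along the entire segment, not only in the limit $\epsilon \to 0$, in order to invoke the global tangent-line inequality below. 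This is also what allows the argument to dispense with the specific energy-density form required in Theorem \ref{cor_estimate3}.

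For a convex scalar function $g$ the tangent-line inequality $g(b) \geq g(a) + g'(a)(b - a)$ holds for all $a, b$. I would apply it to $g_c$ with $a = 1$, $b = 0$, which gives $g_c(1) - g_c(0) \leq g_c'(1)$, that is, $\mathcal{F}_c[\phi] - \mathcal{F}_c[\psi] \leq (\delta_\phi \mathcal{F}_c[\phi], \phi - \psi)_2$; and I would apply it to $g_e$ with $a = 0$, $b = 1$, which gives $g_e(1) - g_e(0) \geq g_e'(0)$, that is, $\mathcal{F}_e[\phi] - \mathcal{F}_e[\psi] \geq (\delta_\psi \mathcal{F}_e[\psi], \phi - \psi)_2$. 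The asymmetry in the two applications (tangent at $\phi$ for the convex part, tangent at $\psi$ for the expansive part) is exactly what produces the mixed pairing $\delta_\phi \mathcal{F}_c[\phi] - \delta_\psi \mathcal{F}_e[\psi]$. Subtracting the second inequality from the first and using $\mathcal{F} = \mathcal{F}_c - \mathcal{F}_e$ together with the bilinearity of $(\cdot, \cdot)_2$ then yields exactly (\ref{ineq_CS}).

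The only genuinely technical point, and the one I would treat most carefully, is justifying the chain-rule identity $g_\alpha'(\epsilon) = (\delta_{\rho(\epsilon)} \mathcal{F}_\alpha[\rho(\epsilon)], \phi - \psi)_2$ and the existence of the second derivative $g_\alpha''$ that the hypothesis controls; this is what forces the standing assumptions that $H$ consist of sufficiently regular periodic functions and that each $\mathcal{F}_\alpha$ be twice Gâteaux differentiable along affine paths. Everything else is the elementary calculus of one-variable convex functions, so once the differentiability of the restrictions $g_\alpha$ is in hand I expect no further obstacle.
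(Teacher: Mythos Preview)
Your proposal is correct and follows essentially the same route as the paper: the paper isolates the one-dimensional tangent-line inequality as a separate lemma (Theorem~\ref{thm_estimate1}), proving $\mathcal{F}_\alpha[\phi]-\mathcal{F}_\alpha[\psi]\geq(\delta_\psi\mathcal{F}_\alpha[\psi],\phi-\psi)_2$ from convexity of $\epsilon\mapsto\mathcal{F}_\alpha[\psi+\epsilon v]$, and then applies it once to $\mathcal{F}_c$ with the roles of $\phi,\psi$ swapped and once to $\mathcal{F}_e$ directly, adding the two---which is exactly your asymmetric pair of tangent applications at $\epsilon=1$ and $\epsilon=0$. Your observation about why the \emph{proper} convex splitting (global nonnegativity of the second derivative, not just at $\epsilon=0$) is needed also matches the paper's remark following the lemma.
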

	\begin{proof}
	The proof is presented in \ref{Proof_Estimate}.
	\end{proof}

In order to demonstrate the power of this decay estimate let us first consider the simple gradient model :
\begin{equation}
\partial_t \rho = \Grad^2 \delta_{\rho} \mathcal{F}.
\end{equation}
Using the energy estimate in Remark. \ref{time_estimate} we can construct an unconditionally energy stable
conservative scheme for gradient dynamics with free energies that satisfy a proper convex splitting.
This is summarized in the following theorem.

\begin{thm}
\label{thm_grad}
Suppose that $\Omega = [0,L_x)\times[0,L_y)$ and $\rho^k : \Omega \to \mathbb{R}^2$ and $\bu^k : \Omega \to \mathbb{R}^2$ for all $k\geq 0, k\in \mathbb{Z}^+$ are
periodic and sufficiently regular. Then
the solution to the scheme 
\begin{equation}
\rho^{k+1} - \rho^k = s \Grad^2 (\delta_{\rho} \mathcal{F}_c[\rho^{k+1}] - \delta_{\rho} \mathcal{F}_e[\rho^{k}] ) ,
\label{gradient_scheme}
\end{equation} 
conserves mass, i.e.,
\begin{equation}
\left( \rho^{k+1},1\right)_2 = \left( \rho^{k},1\right)_2, \qquad \forall k\geq 0.
\end{equation}
If in addition $\mathcal{F}$ admits a proper convex splitting  such that
\begin{equation}
\mathcal{F}[\phi] - \mathcal{F}[\psi] \leq ( \delta_{\phi} \mathcal{F}_c[\phi] - \delta_{\psi} \mathcal{F}_e[\psi],\phi - \psi)_2,
\label{ineq1b}
\end{equation}
then the scheme is unconditionally energy stable and
\begin{equation}
\mathcal{F}[\rho^{k+1}] \leq \mathcal{F}[\rho^k] \quad  \forall k\geq 0, 
\end{equation}  
for any $s>0$.
\end{thm}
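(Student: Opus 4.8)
The plan is to treat the two assertions separately, since both reduce to integration-by-parts identities on the periodic box $\Omega$ combined with the convex-splitting inequality Eq. (\ref{ineq1b}), which I may assume from Theorem \ref{thm_estimate2}. Throughout I would abbreviate the discrete chemical potential as $\mu^k := \delta_{\rho} \mathcal{F}_c[\rho^{k+1}] - \delta_{\rho} \mathcal{F}_e[\rho^{k}]$, so that the scheme Eq. (\ref{gradient_scheme}) reads compactly $\rho^{k+1} - \rho^k = s\, \Grad^2 \mu^k$.

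For mass conservation, I would test the scheme against the constant function $1$ in the $L^2$ inner product, giving
\begin{equation}
\left( \rho^{k+1} - \rho^k, 1 \right)_2 = s\left( \Grad^2 \mu^k, 1 \right)_2 = s \int_{\Omega} \Grad^2 \mu^k \, d\br .
\end{equation}
The right-hand side vanishes because the integral of a Laplacian of a sufficiently regular $\Omega$-periodic function is zero: by the divergence theorem the surface integral of $\Grad \mu^k \cdot \mathbf{n}$ over $\partial\Omega$ collapses to contributions on opposite faces that cancel under periodicity. Hence $(\rho^{k+1}, 1)_2 = (\rho^k, 1)_2$ for every $k$, which is the claimed conservation.

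For unconditional energy stability, I would apply the proper-convex-splitting estimate Eq. (\ref{ineq1b}) with the choice $\phi = \rho^{k+1}$ and $\psi = \rho^k$, yielding
\begin{equation}
\mathcal{F}[\rho^{k+1}] - \mathcal{F}[\rho^k] \leq \left( \mu^k, \rho^{k+1} - \rho^k \right)_2 .
\end{equation}
Substituting $\rho^{k+1} - \rho^k = s\,\Grad^2 \mu^k$ and integrating by parts once (again using periodicity to discard the boundary term) gives
\begin{equation}
\left( \mu^k, \rho^{k+1} - \rho^k \right)_2 = s\left( \mu^k, \Grad^2 \mu^k \right)_2 = -\, s \int_{\Omega} | \Grad \mu^k |^2 \, d\br \leq 0 ,
\end{equation}
where the final inequality uses $s > 0$. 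Chaining the two displays gives $\mathcal{F}[\rho^{k+1}] \leq \mathcal{F}[\rho^k]$ for arbitrary time step $s$, which is the desired unconditional stability.

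There is no serious analytic obstacle here once Theorem \ref{thm_estimate2} is in hand; the argument is essentially a one-line energy test. The only points requiring care are bookkeeping rather than difficulty: one must ensure $\mu^k$ is sufficiently regular and $\Omega$-periodic so that both the vanishing of $\int_\Omega \Grad^2 \mu^k\, d\br$ and the integration by parts $(\mu^k, \Grad^2 \mu^k)_2 = -\|\Grad\mu^k\|_2^2$ are justified with no boundary contributions — which is guaranteed by the standing regularity and periodicity hypotheses on $\rho^k$ and the structure of $\mathcal{F}_c, \mathcal{F}_e$. The conceptual content lies entirely in Eq. (\ref{ineq1b}); the present theorem simply couples that estimate to the dissipative structure of the Laplacian in the scheme.
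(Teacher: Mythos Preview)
Your proof is correct and follows essentially the same approach as the paper's own proof: integrate the scheme over $\Omega$ for mass conservation, then apply the convex-splitting estimate with $\phi=\rho^{k+1}$, $\psi=\rho^k$, substitute the scheme, and integrate by parts to obtain $-s\|\Grad\mu^k\|_2^2\le 0$. The paper presents the same argument in slightly more compressed form, without your explicit remarks on why the boundary terms vanish.
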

\begin{proof}
Conservation follows by integrating Eq. (\ref{gradient_scheme}) over $\Omega$ to obtain 
\begin{equation}
(\rho^{k+1},1)_2= (\rho^k,1)_2.
\end{equation}
Choosing $\phi =\rho^{k+1}$ and $\psi = \rho^{k}$ in Eq. (\ref{ineq1b}) we have the  estimate :
\begin{equation}
\mathcal{F}[\rho^{k+1}] - \mathcal{F}[\rho^k] \leq ( \delta_{\rho} \mathcal{F}_c[\rho^{k+1}] - \delta_{\rho} \mathcal{F}_e[\rho^k],\rho^{k+1} - \rho^k)_2.
\label{ineq2}
\end{equation}
Now using Eq. (\ref{gradient_scheme}) to replace $\rho^{k+1} - \rho^k$ we have
\begin{equation}
\mathcal{F}[\rho^{k+1}] - \mathcal{F}[\rho^k] \leq -s \parallel \Grad \left( \delta_{\rho} \mathcal{F}_c[\rho^{k+1}] - \delta_{\rho} \mathcal{F}_e[\rho^k] \right)\parallel_2^2 \leq 0.
\end{equation}
In the above expression $\parallel \cdot \parallel_2^2 := (\cdot, \cdot)_2$.
\end{proof}

\begin{rem}
The above theorem seamlessly includes both the local and non-local free energy functionals discussed in Section  \ref{section:free_energy}, i.e., 
the cases of PFC and CDFT,
thus serving as a stronger result than the one presented in Wise et al \cite{WISE2009}.
Further Theorem \ref{cor_estimate3} follows immediately from Theorem \ref{thm_estimate2}.
To see this note that convexity of $f_c$ and $f_e$ in their arguments implies $\mathcal{F}_c$ and $\mathcal{F}_e$ satisfy 
Eq. (\ref{condition2}). Hence $\mathcal{F} = \mathcal{F}_c- \mathcal{F}_e$ forms a proper convex splitting. 
The proof of Theorem \ref{cor_estimate3} now follows from Theorem \ref{thm_estimate2}
and can be treated as a corollary.
\end{rem}

\begin{rem}
D. Eyre \cite{Eyre} is credited widely for the convex splitting approach which was originally applied to 
the Cahn-Hilliard equations.
This is the first instance to the best of our knowledge where the energy decay was characterized solely in terms of 
a convexity property without specific functional forms, which enables 
convex splitting arguments to apply to a wider class of free energies,
i.e., the ones that satisfy what we call a proper convex splitting.
In a similar manner one can construct convex splitting schemes for the hydrodynamic models that form the subject of this paper.
This is presented in the following section.
\end{rem}



\section{Semi-discrete Energy Stable discretization of the Hydrodynamic Model} \label{section:method_semi}
An unconditionally energy stable time discretization of the model in Eq. (\ref{hydro_prim}) takes the form (space remains continuous):
\begin{eqnarray}
\rho^{k+1} - \rho^{k} = -s \Div ( \rho^k \bu^{k+\frac{1}{2}} ), \label{semisicrete_1_a} \\
\begin{array}{r} \displaystyle \rho^k ( \bu^{k+1} - \bu^{k} ) = s \left[ -\rho^k \omega^k \times \bu^{k+\frac{1}{2}} - \frac{\rho^k}{2} \Grad \mid \bu^{k+1} \mid^2 - \rho^k \Grad \mu^{k+1} \right. \\
\displaystyle \left. + \gamma \Div \mathcal{D}^{k+\frac{1}{2}} \right],
\end{array} \label{semisicrete_1_b} \\
\mu^{k+1} = \delta_{\rho} \mathcal{F}_c[\rho^{k+1}] - \delta_{\rho} \mathcal{F}_e[\rho^{k}]\label{semisicrete_1_c} , 
\end{eqnarray}
where we have used the relation $\bu \cdot \Grad \bu = \omega \times \bu + \frac{1}{2} \Grad \mid  \bu \mid^2 $ and $\omega = \Grad \times \bu $.
In the above equations the superscripts ($k$ and $k+1$) stand for the discretization of time as in the case of Remark \ref{time_estimate} and 
\begin{equation}
\label{u_half}
\bu^{k+\frac{1}{2}} := \frac{1}{2} (\bu^{k+1} + \bu^k), 
\end{equation}
and 
\begin{equation}
\label{D_half}
\mathcal{D}^{k+\frac{1}{2}} :=  \left(\Grad \bu^{k+\frac{1}{2}} + ({\Grad \bu^{k+\frac{1}{2}}})^T\right) - \Div \bu^{k+\frac{1}{2}} \mathbf{I}.
\end{equation}
It is easy to see that $\Div \mathcal{D}^{k+\frac{1}{2}} = \Grad^2 \bu$.
Note that this discretization respects the proper convex splitting of the free energies, i.e., the 
convex part of the free energy is treated as an implicit term and the concave parts are treated as explicit terms.
Now we have the following theorem.

\begin{thm}
\label{discrete_time_thm}
Suppose that $\Omega = [0,L_x)\times [0,L_y)$ and $\rho^k : \Omega \to \mathbb{R}^2$ and $\bu^k : \Omega \to \mathbb{R}^2$ for all $k\geq 0, k\in \mathbb{Z}^+$ are
periodic and sufficiently regular. Then
the solution to the scheme Eq. (\ref{semisicrete_1_a}) - (\ref{semisicrete_1_c}) conserves mass, i.e.,
\begin{equation}
\left( \rho^{k+1},1\right)_{2} = \left( \rho^{k},1\right)_{2}, \qquad \forall k\geq 0.
\end{equation}
If in addition $\mathcal{F}$ admits a proper convex splitting  such that
\begin{equation}
\mathcal{F}[\phi] - \mathcal{F}[\psi] \leq ( \delta_{\phi} \mathcal{F}_c[\phi] - \delta_{\psi} \mathcal{F}_e[\psi],\phi - \psi)_2,
\label{ineq1}
\end{equation}
then the scheme is unconditionally energy stable with respect to the total energy $\mathcal{E}[\rho, \bu]$ (defined in Eq. \ref{total_energy}) and
\begin{equation}
\mathcal{E}[\rho^{k+1},\bu^{k+1}] \leq \mathcal{E}[\rho^k,\bu^k] \quad  \forall k\geq 0, 
\end{equation}  
for any $s>0$.
\end{thm}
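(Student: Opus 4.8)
The plan is to verify mass conservation directly and then to establish the energy decay by testing the continuity and momentum updates against carefully chosen multipliers, arranged so that every conservative (non-dissipative) term cancels and only the viscous contribution survives. Conservation is immediate: integrating (\ref{semisicrete_1_a}) over $\Omega$ and using periodicity to discard $\int_\Omega \Div(\rho^k\bu^{k+\frac{1}{2}})\,d\br = 0$ gives $(\rho^{k+1},1)_2 = (\rho^k,1)_2$ for every $k$.

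For the energy I would split $\mathcal{E}[\rho^{k+1},\bu^{k+1}] - \mathcal{E}[\rho^k,\bu^k]$ into a free-energy part and a kinetic part. The free-energy part is controlled by the proper convex splitting: applying (\ref{ineq1}) with $\phi = \rho^{k+1}$, $\psi = \rho^k$ and recalling (\ref{semisicrete_1_c}) gives $\mathcal{F}[\rho^{k+1}] - \mathcal{F}[\rho^k] \leq (\mu^{k+1},\rho^{k+1}-\rho^k)_2$; substituting the continuity update (\ref{semisicrete_1_a}) for $\rho^{k+1}-\rho^k$ and integrating by parts yields the bound $\mathcal{F}[\rho^{k+1}] - \mathcal{F}[\rho^k] \leq s\,(\rho^k\Grad\mu^{k+1},\bu^{k+\frac{1}{2}})_2$. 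For the kinetic part I would use the pointwise identity
\[
\frac{1}{2}\left(\rho^{k+1}|\bu^{k+1}|^2 - \rho^k|\bu^k|^2\right) = \frac{1}{2}(\rho^{k+1}-\rho^k)|\bu^{k+1}|^2 + \rho^k\,\bu^{k+\frac{1}{2}}\cdot(\bu^{k+1}-\bu^k),
\]
which follows from $|\bu^{k+1}|^2-|\bu^k|^2 = 2\,\bu^{k+\frac{1}{2}}\cdot(\bu^{k+1}-\bu^k)$. In the first term on the right I substitute (\ref{semisicrete_1_a}) and integrate by parts to obtain $+\frac{s}{2}(\rho^k\bu^{k+\frac{1}{2}},\Grad|\bu^{k+1}|^2)_2$, while in the second I substitute the momentum update (\ref{semisicrete_1_b}) dotted with $\bu^{k+\frac{1}{2}}$.

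The crux is then the bookkeeping of cancellations, which is exactly what the scheme is engineered to produce. The rotational term vanishes pointwise because $\bu^{k+\frac{1}{2}}\cdot(\omega^k\times\bu^{k+\frac{1}{2}}) = 0$; the Bernoulli term $-\frac{\rho^k}{2}\Grad|\bu^{k+1}|^2$ from (\ref{semisicrete_1_b}) cancels exactly against the $+\frac{s}{2}(\rho^k\bu^{k+\frac{1}{2}},\Grad|\bu^{k+1}|^2)_2$ produced by the continuity substitution; and the chemical-potential work $-s(\rho^k\bu^{k+\frac{1}{2}},\Grad\mu^{k+1})_2$ cancels against the free-energy bound established above. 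Adding the two parts, all that remains is the viscous term $s\gamma(\bu^{k+\frac{1}{2}},\Div\mathcal{D}^{k+\frac{1}{2}})_2$. Using $\Div\mathcal{D}^{k+\frac{1}{2}} = \Grad^2\bu^{k+\frac{1}{2}}$ and integration by parts over the periodic domain, this equals $-s\gamma\parallel\Grad\bu^{k+\frac{1}{2}}\parallel_2^2 \leq 0$ (equivalently $-\frac{s\gamma}{2}\parallel\mathcal{D}^{k+\frac{1}{2}}\parallel_2^2$, since $\mathcal{D}$ is symmetric and trace-free in two dimensions), so that $\mathcal{E}[\rho^{k+1},\bu^{k+1}] \leq \mathcal{E}[\rho^k,\bu^k]$ for any $s>0$.

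I expect the main obstacle to be precisely this cancellation bookkeeping: one must dot the momentum equation against the midpoint velocity $\bu^{k+\frac{1}{2}}$ (rather than $\bu^{k+1}$ or $\bu^k$) so that $\rho^k\bu^{k+\frac{1}{2}}\cdot(\bu^{k+1}-\bu^k)$ reproduces the kinetic-energy increment \emph{exactly}, and one must track signs so that the Bernoulli and pressure-work terms annihilate their counterparts independently of $s$. Ensuring that no boundary terms survive each integration by parts relies throughout on the periodicity and regularity hypotheses on $\rho^k$ and $\bu^k$.
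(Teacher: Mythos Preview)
Your proof is correct and follows essentially the same approach as the paper: test the momentum update against $\bu^{k+\frac{1}{2}}$, combine with the kinetic-energy identity you wrote down (which the paper uses in integrated form), and let the Bernoulli, rotational, and chemical-potential terms cancel via the continuity equation, leaving only the viscous dissipation. The only cosmetic difference is that the paper expresses the final dissipative remainder as $-\frac{s\gamma}{2}\int_\Omega \mathcal{D}^{k+\frac{1}{2}}:\mathcal{D}^{k+\frac{1}{2}}\,d\br$ via a tensor identity, whereas you go directly through $\Div\mathcal{D}^{k+\frac{1}{2}} = \Grad^2\bu^{k+\frac{1}{2}}$ to obtain $-s\gamma\|\Grad\bu^{k+\frac{1}{2}}\|_2^2$; the two are equivalent on the periodic domain.
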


\begin{proof}
Integrating Eq. (\ref{semisicrete_1_a})  over $\Omega$ we immediately have
mass conservation $\left( \rho^{k+1},1\right)_{2} = \left( \rho^{k},1\right)_{2}$.
Choosing $\phi =\rho^{k+1}$ and $\psi = \rho^{k}$ in Eq. (\ref{ineq1}) we have
the following estimate on the free energy :
\begin{equation}
\mathcal{F}[\rho^{k+1}] - \mathcal{F}[\rho^k] \leq ( \delta_{\rho} \mathcal{F}_c[\rho^{k+1}] - \delta_{\rho} \mathcal{F}_e[\rho^k],\rho^{k+1} - \rho^k)_2.
\label{ineq1_use}
\end{equation}
This estimate immediately translates to an estimate for the total energy (see Eq. (\ref{total_energy})) as follows :
 \begin{equation}
 \begin{array}{rl}
\displaystyle \mathcal{E} [ \rho^{k+1},\bu^{k+1}] - \mathcal{E}[\rho^k,\bu^k] & \displaystyle = \frac{1}{2}(\rho^{k+1} \bu^{k+1}, \bu^{k+1} ) - \frac{1}{2}\left( \rho^{k} \bu^{k}, \bu^{k} \right)  \\
& \displaystyle  \quad + \mathcal{F}[\rho^{k+1}] - \mathcal{F}[\rho^k]  \\
& \displaystyle \leq \frac{1}{2}(\rho^{k+1} \bu^{k+1}, \bu^{k+1} ) - \frac{1}{2}\left( \rho^{k} \bu^{k}, \bu^{k} \right)  \\
& \displaystyle  \quad +  (\rho^{k+1} - \rho^k,  \mu^{k+1} ), \\
\end{array}
\label{total_estimate}
\end{equation}
where we have used Eq. (\ref{ineq1_use}) in in step 2 along with the definition of $\mu^{k+1}$ (Eq. (\ref{semisicrete_1_c})).
Multiplying Eq. (\ref{semisicrete_1_b}) by $\bu^{k+\frac{1}{2}}$ and integrating over $\Omega$ we have
 \begin{equation}
 \label{eq_s0}
 \begin{array}{rl}
\displaystyle \frac{1}{2} \int_{\Omega}\rho^{k+1} \mid\bu^{k+1}\mid^2 d\br  - \frac{1}{2}\int_{\Omega} \rho^{k} \mid \bu^{k}\mid^2 d\br  
& \displaystyle - \frac{1}{2}\int_{\Omega} \left(\rho^{k+1} - \rho^k \right) \mid \bu^{k+1}\mid^2 d\br \\
 = & \displaystyle - s \frac{1}{2} \int_{\Omega} \rho^k \bu^{k+\frac{1}{2}} \cdot \Grad \mid \bu^{k+1} \mid^2 d\br \\
& \displaystyle  - s \int_{\Omega} \rho^k \bu^{k+\frac{1}{2}} \cdot \Grad \mu^{k+1} d \br  \\
& \displaystyle+ s \gamma 
\int_{\Omega} \bu^{k+\frac{1}{2}} \cdot  \left( \Div \mathcal{D}^{k+\frac{1}{2}} \right) d\br . 
\end{array}
\end{equation}
Now we note that
\begin{equation}
\label{eq_s1}
\begin{array}{rl}
\displaystyle - \frac{1}{2}\int_{\Omega} \left(\rho^{k+1} - \rho^k \right) \mid \bu^{k+1}\mid^2 d\br  & \displaystyle =  \frac{1}{2}\int_{\Omega} s \Div (\rho^k \bu^{k+\frac{1}{2}}) \mid \bu^{k+1}\mid^2 d\br,    \\
	 					& \displaystyle =  - s \frac{1}{2}\int_{\Omega} \rho^k \bu^{k+\frac{1}{2}} \cdot \Grad \mid \bu^{k+1}\mid^2 d\br,
\end{array}
\end{equation}
where we have used the continuity equation Eq. (\ref{semisicrete_1_a}) in the first step and integration by parts in the second step.
Further we have
\begin{equation}
\label{eq_s2}
\begin{array}{rl}
\displaystyle - s \int_{\Omega} \rho^k \bu^{k+\frac{1}{2}} \cdot \Grad \mu^{k+1} d \br &\displaystyle =  s \int_{\Omega} \Div \left( \rho^k \bu^{k+\frac{1}{2}} \right) \mu^{k+1} d \br,  \\
& \displaystyle =  - \int_{\Omega} \left(\rho^{k+1} - \rho^k \right) \mu^{k+1} d \br,  
\end{array}
\end{equation}
where we have used integration by parts in the first step and the continuity equation Eq. (\ref{semisicrete_1_a}) in the second step.
Now combining the relations Eq. (\ref{eq_s1}) and Eq. (\ref{eq_s2}) with Eq. (\ref{eq_s0}) and simplifying the expression we get

\begin{equation}
 \begin{array}{rl}
\displaystyle  \frac{1}{2}(\rho^{k+1} \bu^{k+1}, \bu^{k+1} ) - \frac{1}{2}\left( \rho^{k} \bu^{k}, \bu^{k} \right)  \\
\displaystyle \quad+  (\rho^{k+1} - \rho^k,  \mu^{k+1} ) 
& \displaystyle= s \gamma 
\int_{\Omega} \bu^{k+\frac{1}{2}} \cdot \Div \mathcal{D}^{k+\frac{1}{2}} d\br,  \\
& \displaystyle=  - \frac{s \gamma}{2} 
\int_{\Omega} \mathcal{D}^{k+\frac{1}{2}} :\mathcal{D}^{k+\frac{1}{2}} d\br,
\end{array}
\label{parts_eq}
\end{equation}
where we have used the identity
\begin{equation}
\int_{\Omega} \bu^{k+\frac{1}{2}} \cdot \Div \mathcal{D}^{k+\frac{1}{2}} d\br =  - \frac{1}{2} \int_{\Omega} \mathcal{D}^{k+\frac{1}{2}} : \mathcal{D}^{k+\frac{1}{2}} d \br,
\end{equation}
in the second step. This expression is proven in  \ref{Appendix_Tensor}.
Finally using Eq. (\ref{parts_eq}) to simplify the right hand side of Eq. (\ref{total_estimate})  we have
 \begin{equation}
 \begin{array}{rl}
\displaystyle \mathcal{E} [ \rho^{k+1},\bu^{k+1}] - \mathcal{E}[\rho^k,\bu^k] & \displaystyle \leq \frac{1}{2}(\rho^{k+1} \bu^{k+1}, \bu^{k+1} ) - \frac{1}{2}\left( \rho^{k} \bu^{k}, \bu^{k} \right)  \\
& \displaystyle  \quad +  (\rho^{k+1} - \rho^k,  \mu^{k+1} ), \\
& \displaystyle= -\frac{s \gamma}{2} \int_{\Omega} \mathcal{D}^{k+\frac{1}{2}} : \mathcal{D}^{k+\frac{1}{2}} d \br, \\
&\displaystyle \leq 0.
\end{array}
\end{equation}

Hence the total energy of the system is non-increasing, regardless of the time step $s>0$ i.e., $\mathcal{E}[\rho^{k+1},\bu^{k+1}] \leq \mathcal{E}[\rho^k,\bu^k]$. Such schemes are called unconditionally energy stable.
\end{proof}

The above theorem is one of the main results in this paper.
The discrete time continuous space method is energy stable for the 
CDFT and the PFC free energy models presented in the previous section.
It is worth noting that the proof of the above theorem relies entirely on the free energy 
satisfying a proper convex splitting. Thus the theorem holds for a large class of continuum free energy models 
where the free energy admits a proper convex splitting. In what follows we will present the fully discrete version of the method.




\section{Fully Discrete Numerical method for Hydrodynamic Models} \label{section:method_full}

A fully discrete energy stable discretization of Eq. (\ref{Eqn:Hydro_damp}) is written as :
\begin{eqnarray}
\displaystyle \rho^{k+1}_{ij} - \rho^{k}_{ij} = -s \Divh ( \rho^k_{ij} \bu^{k+\frac{1}{2}}_{ij} )\label{discrete_a},\\
\displaystyle \begin{array}{r}\displaystyle  \rho^k_{ij} ( \bu^{k+1}_{ij} - \bu^{k}_{ij} ) = 
 s \left[ - \rho^k_{ij} \omega_{ij}^k \times \bu^{k+\frac{1}{2}}_{ij} - \frac{\rho^k}{2} \Gradh \mid \bu^{k+1}_{ij} \mid^2 - \rho^k_{ij} \Gradh \mu^{k+1}_{ij} \right. \\ 
\displaystyle  \left. + \gamma \Delta_h \bu^{k+\frac{1}{2}}_{ij} \right],\end{array}
\label{discrete_b}\\
\displaystyle \mu_{ij}^{k+1} = \delta_{\rho} \mathcal{F}_c[\rho_{ij}^{k+1}] - \delta_{\rho} \mathcal{F}_e[\rho_{ij}^{k}]\label{discrete_c},\\
\displaystyle  \omega^k_{ij} = \left( \Gradh \times \bu^k_{ij} \right)\label{discrete_d}, 
\end{eqnarray}
where $\rho \in \mathcal{C}_{m\times n}$ periodic, $\mathcal{F},\mathcal{F}_c, \mathcal{F}_e : \mathcal{C}_{m \times n} \longrightarrow \mathbb{R}$ are fully discrete free energy functionals such that $\mathcal{F}[\rho] = \mathcal{F}_c[\rho] - \mathcal{F}_e[\rho]$ is a proper convex splitting of $\mathcal{F}[\rho]$. 
The discrete function spaces and the corresponding discrete differential operators 
and norms are defined in \ref{appendix_norms}.



\subsection{Unconditional Energy Stability and Conservation}
In order to show that the fully discrete scheme presented in  Eqs. (\ref{discrete_a}) - (\ref{discrete_d})
is unconditionally energy stable and mass conserving
we present the following general theorem which is independent of the form 
of the free energy.
The result however takes advantage of the energy estimate from the proper convex splitting to show unconditional stability.
\begin{thm}
Suppose that $\rho^{k} \in \mathcal{C}_{\bar{m} \times \bar{n}}$ and $\bu^k \in \mathcal{C}_{\bar{m} \times \bar{n}} \times \mathcal{C}_{\bar{m} \times \bar{n}} $ for all $k\geq 0$ are
periodic , then
the solution to the scheme Eq. (\ref{discrete_a}) - (\ref{discrete_d}), $\rho^{k+1}, \bu^{k+1}$ conserves mass, i.e.,
\begin{equation}
\left( \rho^{k+1} \parallel 1\right) = \left( \rho^{k} \parallel 1\right), \qquad \forall k\geq 0.
\end{equation}
If in addition $\mathcal{F}:= \mathcal{F}_c - \mathcal{F}_e$  satisfies the inequality
\begin{equation}
\mathcal{F}[\phi] - \mathcal{F}[\psi] \leq ( \delta_{\phi} \mathcal{F}_c[\phi] - \delta_{\psi} \mathcal{F}_e[\psi] \parallel \phi - \psi),
\label{ineq2c}
\end{equation}
then the scheme is also unconditionally energy stable, with discrete energy 
\begin{equation}
\mathcal{E}[\rho^k, \bu^k] = \frac{1}{2} (\rho^k \bu^k \parallel \bu^k ) + \mathcal{F}[\rho^k].
\end{equation}
Thus we have
\begin{equation}
\mathcal{E}[\rho^{k+1},\bu^{k+1}] \leq \mathcal{E}[\rho^k,\bu^k] \quad  \forall k\geq 0, 
\end{equation}  
for any $s>0$.
\label{thm_energy_stability}
\end{thm}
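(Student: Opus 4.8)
The plan is to follow the proof of the semi-discrete Theorem~\ref{discrete_time_thm} line by line, replacing each continuous integration by parts with the corresponding discrete summation by parts for the operators $\Gradh$, $\Divh$ and $\Dh$ defined in~\ref{appendix_norms}. Mass conservation is the quickest step: I would take the discrete inner product of the continuity update~(\ref{discrete_a}) with the constant grid function $1$. Because $\Divh$ of a periodic grid function sums to zero, the right-hand side vanishes and $( \rho^{k+1} \parallel 1 ) = ( \rho^k \parallel 1 )$. For the free-energy increment I would apply the hypothesis~(\ref{ineq2c}) with $\phi = \rho^{k+1}$ and $\psi = \rho^k$ and use the definition~(\ref{discrete_c}) of $\mu^{k+1}$ to obtain $\mathcal{F}[\rho^{k+1}] - \mathcal{F}[\rho^k] \leq ( \rho^{k+1} - \rho^k \parallel \mu^{k+1} )$.

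Next I would extract the kinetic-energy increment by taking the discrete inner product of the momentum update~(\ref{discrete_b}) with $\bu^{k+\frac{1}{2}}$. Using the pointwise identity $(\bu^{k+1} - \bu^k)\bcdot(\bu^{k+1} + \bu^k) = \mid \bu^{k+1} \mid^2 - \mid \bu^k \mid^2$ and writing $\rho^k = \rho^{k+1} - (\rho^{k+1} - \rho^k)$ reproduces the fully discrete analog of~(\ref{eq_s0}): the left-hand side becomes $\frac{1}{2}( \rho^{k+1} \bu^{k+1} \parallel \bu^{k+1} ) - \frac{1}{2}( \rho^k \bu^k \parallel \bu^k )$ together with the correction term $-\frac{1}{2}( \rho^{k+1} - \rho^k \parallel \mid \bu^{k+1} \mid^2 )$, while the right-hand side carries the advective, pressure and viscous contributions.

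The heart of the argument is then three cancellations, each a discrete mirror of a step in the semi-discrete proof. The advective work $-\frac{s}{2}( \rho^k \bu^{k+\frac{1}{2}} \parallel \Gradh \mid \bu^{k+1} \mid^2 )$ cancels the kinetic correction term exactly, which is the discrete form of~(\ref{eq_s1}) and follows from the discrete summation-by-parts identity $( \mathbf{w} \parallel \Gradh \phi ) = -( \Divh \mathbf{w} \parallel \phi )$ combined with~(\ref{discrete_a}). The pressure work $-s( \rho^k \bu^{k+\frac{1}{2}} \parallel \Gradh \mu^{k+1} )$ equals $-( \rho^{k+1} - \rho^k \parallel \mu^{k+1} )$ by the same summation by parts and~(\ref{discrete_a}), the discrete form of~(\ref{eq_s2}); this cancels the free-energy bound from the first paragraph. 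Finally, since~(\ref{discrete_b}) carries the viscous term as $\gamma \Dh \bu^{k+\frac{1}{2}}$ directly, I do not need the discrete tensor identity of~\ref{Appendix_Tensor}: it suffices that $s\gamma( \bu^{k+\frac{1}{2}} \parallel \Dh \bu^{k+\frac{1}{2}} ) = -s\gamma( \Gradh \bu^{k+\frac{1}{2}} \parallel \Gradh \bu^{k+\frac{1}{2}} ) \leq 0$ by discrete summation by parts, i.e.\ that $\Dh$ is negative semidefinite on periodic grid functions. Collecting the surviving terms yields $\mathcal{E}[\rho^{k+1},\bu^{k+1}] - \mathcal{E}[\rho^k,\bu^k] \leq -s\gamma( \Gradh \bu^{k+\frac{1}{2}} \parallel \Gradh \bu^{k+\frac{1}{2}} ) \leq 0$ for every $s > 0$.

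I expect the only real obstacle to be the discrete-operator bookkeeping rather than any new analytic idea: all three cancellations rest on $\Gradh$ and $\Divh$ being exact negative adjoints of one another on periodic grid functions, and on the discrete continuity relation~(\ref{discrete_a}) inserting $\Divh(\rho^k \bu^{k+\frac{1}{2}})$ in precisely the same form needed for both the advective and the pressure manipulations. Provided the operator definitions in~\ref{appendix_norms} are arranged so that these adjointness and consistency identities hold exactly (not merely to truncation order), the estimate closes with no restriction on $s$; the convexity hypothesis enters only through~(\ref{ineq2c}) in the first paragraph, exactly as in the continuous and semi-discrete cases.
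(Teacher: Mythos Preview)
Your proposal is correct and follows essentially the same route as the paper's proof: mass conservation via the discrete divergence theorem on~(\ref{discrete_a}), the convexity bound~(\ref{ineq2c}) for the free-energy increment, the inner product of~(\ref{discrete_b}) with $\bu^{k+\frac{1}{2}}$, and the same two summation-by-parts cancellations (advective and pressure) against the continuity relation. The only cosmetic difference is in the viscous term: the paper invokes the discrete Green's first identity (Proposition~\ref{green1stthm-2d}) to write $s\gamma(\bu^{k+\frac{1}{2}}\parallel\Dh\bu^{k+\frac{1}{2}}) = -s\gamma\bigl(\eipew{D_x u}{D_x u}+\eipns{D_y v}{D_y v}\bigr)$ in terms of the edge-centered differences $D_x,D_y$ rather than the cell-centered $\Gradh$, but either way the term is nonpositive and the estimate closes.
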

\begin{proof} The proof of the theorem is similar to Theorem \ref{discrete_time_thm} and is presented in \ref{proof_thm_energy_stability}. Note that $\left( \cdot \parallel \cdot \right)$ is an appropriately defined discrete inner product (see \ref{appendix_norms}).
\end{proof}
 
Now in order to construct a fully discrete convex splitting scheme that is unconditionally energy stable 
and mass conserving all we need is a fully discrete proper convex splitting of the free energy.




\subsection{Proper Convex-Splitting for Fully Discrete CDFT Free Energy}
The fully discrete free energy functional the CDFT model $\mathcal{F}_{CDFT}:\mathcal{C}_{m \times n} \longrightarrow \mathbb{R}$ 
is given by :
\begin{equation}
\mathcal{F}_{CDFT}[\rho] = \left( \rho \parallel \ln( \rho) -1 \right) -  \frac{1}{2} (\rho \parallel  J * \rho). 
\label{CS_DFT_discrete}
\end{equation}
where $J,J_c, J_e \in \mathcal{C}_{m \times n}$ are the restrictions of the convolution kernel given by $J_{i,j} := J( x_i, x_j)$.
In the above equation the periodic discrete convolution is defined as:
\begin{equation}
\label{discrete_convolution}
(J*\rho)_{i,j} := h^2 \sum_{k}^m \sum_l^n J_{k,l} \rho_{i-k.j-l},
\end{equation}
where $h$ is the uniform grid spacing for $\mathcal{C}_{m \times n}$ (see \ref{appendix_norms} for definitions).
We now have the following energy estimate.
\begin{lem} \label{lem:CDFT_ineq}
Suppose $\rho \in \mathcal{C}_{m \times n}$ is periodic then the energies
\begin{equation}
\displaystyle \mathcal{F}_{CDFT,c} [ \rho ] :=  \left( \rho \parallel \ln( \rho) -1 \right) + (J_e * 1)  \parallel \rho \parallel_2^2 ,
\label{CS_DFT_discrete1}
\end{equation} 
and
\begin{equation}
\displaystyle \mathcal{F}_{CDFT,e} [ \rho ] :=   (J_e  * 1)  \parallel \rho \parallel_2^2 +  \frac{1}{2} (\rho \parallel  J * \rho) ,
\label{CS_DFT_discrete2}
\end{equation}
are convex in $\rho$ and  $\mathcal{F}_{CDFT}:=\mathcal{F}_{CDFT,c} -\mathcal{F}_{CDFT,e}  $ is a proper convex splitting of the $\mathcal{F}_{CDFT}$.
The gradients of the energies are
 \begin{equation}
 \delta_{\rho} \mathcal{F}_{CDFT,c}[\rho] = \ln ( \rho ) + 2 (J_e * 1) \rho,
 \end{equation} 
  \begin{equation}
 \delta_{\rho} \mathcal{F}_{CDFT,e}[\rho] =  2 (J_e  * 1) \rho + J * \rho,
 \end{equation}
 and the energy satisfies 
\begin{equation}
\mathcal{F}[\phi] - \mathcal{F}[\psi] \leq ( \delta_{\rho} \mathcal{F}_c[\phi] - \delta_{\rho} \mathcal{F}_e[\psi] \parallel \phi - \psi).
\label{CDFT_ineq}
\end{equation}
\label{CDFT_estimate}
\end{lem}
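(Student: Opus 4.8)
The plan is to reduce the lemma to two convexity statements, since the final inequality (\ref{CDFT_ineq}) is exactly the conclusion of Theorem \ref{thm_estimate2} specialized to the finite-dimensional inner-product space $\mathcal{C}_{m\times n}$: once $\mathcal{F}_{CDFT,c}$ and $\mathcal{F}_{CDFT,e}$ are shown convex, the proper-convex-splitting estimate holds verbatim, obtained by subtracting the tangent-line inequality $\mathcal{F}_c[\phi]-\mathcal{F}_c[\psi]\le(\delta_\phi\mathcal{F}_c[\phi]\parallel\phi-\psi)$ from the reversed inequality for the convex $\mathcal{F}_e$. The proof of Theorem \ref{thm_estimate2} uses only convexity and the bilinearity of the inner product, so it transfers without change to the discrete space. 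I would therefore first record two routine checks: (i) the splitting is consistent, i.e. the two terms $(J_e*1)\parallel\rho\parallel_2^2$ cancel in $\mathcal{F}_{CDFT,c}-\mathcal{F}_{CDFT,e}$ to recover (\ref{CS_DFT_discrete}); and (ii) the stated gradients, obtained by differentiating $h^2\sum_{ij}\rho_{ij}(\ln\rho_{ij}-1)$ entrywise (the $-1$ cancels the derivative of $\rho\ln\rho$, leaving $\ln\rho$), differentiating the constant-weighted square to get $2(J_e*1)\rho$, and differentiating $\tfrac12(\rho\parallel J*\rho)$ to get $J*\rho$, where self-adjointness of the discrete convolution is precisely the evenness hypothesis $J_\alpha(-\br)=J_\alpha(\br)$.

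For convexity I would compute second variations. For $\mathcal{F}_{CDFT,c}$ one finds $\frac{d^2}{d\epsilon^2}(\rho+\epsilon v\parallel \ln(\rho+\epsilon v)-1)=h^2\sum_{ij} v_{ij}^2/(\rho_{ij}+\epsilon v_{ij})\ge0$ on the cone of strictly positive grid functions (the physically relevant domain, where $H$ is taken to consist of positive $\rho$ and directions are restricted to keep $\rho+\epsilon v>0$), while $\frac{d^2}{d\epsilon^2}(J_e*1)\parallel\rho+\epsilon v\parallel_2^2=2(J_e*1)\parallel v\parallel_2^2\ge0$ since $J_e\ge0$ pointwise forces the constant $J_e*1\ge0$. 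Adding these gives convexity of $\mathcal{F}_{CDFT,c}$.

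The crux is $\mathcal{F}_{CDFT,e}$, whose second variation is $2(J_e*1)\parallel v\parallel_2^2+(v\parallel J*v)$. The main technical step is the convolution bound $|(v\parallel J_\alpha*v)|\le (J_\alpha*1)\parallel v\parallel_2^2$, valid for any pointwise-nonnegative even periodic kernel $J_\alpha$. I would prove it by writing $(v\parallel J_\alpha*v)=h^4\sum_{ij}\sum_{kl}(J_\alpha)_{kl}\,v_{ij}\,v_{i-k,j-l}$, applying $\pm v_{ij}v_{i-k,j-l}\le\tfrac12(v_{ij}^2+v_{i-k,j-l}^2)$ termwise (legitimate because $(J_\alpha)_{kl}\ge0$), and using periodic shift-invariance $\sum_{ij}v_{i-k,j-l}^2=\sum_{ij}v_{ij}^2$ to collapse both double sums to $(J_\alpha*1)\parallel v\parallel_2^2$. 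Applying this to $J=J_c-J_e$ gives the lower bound $(v\parallel J*v)\ge -(J_c*1)\parallel v\parallel_2^2-(J_e*1)\parallel v\parallel_2^2$, so the second variation is at least $(J_e*1-J_c*1)\parallel v\parallel_2^2=-(J*1)\parallel v\parallel_2^2\ge0$, since the hypothesis $\int_\Omega J<0$ reads $J*1<0$ in the discrete setting. This establishes convexity of $\mathcal{F}_{CDFT,e}$, hence a proper convex splitting, and (\ref{CDFT_ineq}) follows from Theorem \ref{thm_estimate2}.

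The main obstacle is the convolution estimate together with its sign bookkeeping: securing an upper bound for the subtracted $J_e$-contribution and a lower bound for the added $J_c$-contribution, pinning down the exact constant $J_\alpha*1$, and then matching these against the convexifying term $2(J_e*1)\parallel v\parallel_2^2$ so that the residual deficit is controlled precisely by $-(J*1)>0$. Everything else is routine entrywise differentiation and an appeal to the already-established Theorem \ref{thm_estimate2}.
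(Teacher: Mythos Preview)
Your proposal is correct and follows essentially the same approach as the paper. The paper's own proof is a one-liner (``it is easy to verify that $\frac{d^2}{dq^2}\mathcal{F}_{CDFT,\alpha}[\rho+q\phi]\ge0$ \ldots\ the inequality follows from the proper convex splitting''), but the underlying computation is exactly the one you carry out: the key convolution bound $|(v\parallel J_\alpha*v)|\le(J_\alpha*1)\parallel v\parallel_2^2$ for nonnegative even kernels, proved via $\pm ab\le\tfrac12(a^2+b^2)$ and periodic shift-invariance, is precisely what the paper records separately as a proposition in its appendix on discrete-convolution properties, and your sign bookkeeping to obtain $-(J*1)\parallel v\parallel_2^2\ge0$ is the intended application.
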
 
\begin{proof}
It is easy to verify that \[  \frac{d^2}{d q^2} F_{CDFT,c}[\rho + q \phi] \geq0  \qquad \text{and} \qquad \frac{d^2}{d q^2} F_{CDFT,e}[\rho + q \phi] \geq0 \] 
for any periodic $\phi \in \mathcal{C}_{m \times n}, q \in \mathbb{R}$.
The inequality in Eq. (\ref{CDFT_ineq}) follows from the proper convex splitting of $\mathcal{F}_{CDFT}$.
\end{proof}

\begin{cor}
Suppose that $\rho^{k} \in \mathcal{C}_{\bar{m} \times \bar{n}}$ and $\bu^k \in \mathcal{C}_{\bar{m} \times \bar{n}} \times \mathcal{C}_{\bar{m} \times \bar{n}} $ for all $k\geq 0$ are periodic and
the discrete free energy is defined by Eqs. (\ref{CS_DFT_discrete})-(\ref{CS_DFT_discrete2}), then the scheme in Eqs. (\ref{discrete_a}) - (\ref{discrete_d}) is mass conserving and unconditionally energy stable for $s>0$.
\end{cor}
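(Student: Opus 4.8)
The plan is to observe that this corollary follows immediately by combining the general fully discrete stability result, Theorem \ref{thm_energy_stability}, with the specific energy estimate established for the CDFT free energy in Lemma \ref{lem:CDFT_ineq}. Theorem \ref{thm_energy_stability} asserts that for any fully discrete free energy admitting a decomposition $\mathcal{F} = \mathcal{F}_c - \mathcal{F}_e$ satisfying the inequality (\ref{ineq2c}), the scheme (\ref{discrete_a})--(\ref{discrete_d}) is both mass conserving and unconditionally energy stable. The entire task therefore reduces to verifying that the hypotheses of that theorem hold for the CDFT splitting defined in Eqs. (\ref{CS_DFT_discrete1})--(\ref{CS_DFT_discrete2}).

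First I would invoke Lemma \ref{lem:CDFT_ineq} directly: it establishes that $\mathcal{F}_{CDFT,c}$ and $\mathcal{F}_{CDFT,e}$ are each convex in $\rho$, that their difference is a proper convex splitting of $\mathcal{F}_{CDFT}$, and, crucially, that the resulting energy satisfies exactly the inequality
\begin{equation}
\mathcal{F}[\phi] - \mathcal{F}[\psi] \leq ( \delta_{\rho} \mathcal{F}_c[\phi] - \delta_{\rho} \mathcal{F}_e[\psi] \parallel \phi - \psi),
\end{equation}
which is precisely the form required as hypothesis (\ref{ineq2c}) in Theorem \ref{thm_energy_stability}. Thus no new analytic work is needed; the discrete CDFT energy is a concrete instance of the abstract class for which the general theorem applies.

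Second, I would apply Theorem \ref{thm_energy_stability} with $\mathcal{F} = \mathcal{F}_{CDFT}$, $\mathcal{F}_c = \mathcal{F}_{CDFT,c}$, and $\mathcal{F}_e = \mathcal{F}_{CDFT,e}$. Mass conservation follows from summing the discrete continuity equation (\ref{discrete_a}) over the grid, exactly as in the proof of the general theorem, and is independent of the choice of free energy. Energy stability then follows because the hypothesis (\ref{ineq2c}) is met, giving $\mathcal{E}[\rho^{k+1}, \bu^{k+1}] \leq \mathcal{E}[\rho^k, \bu^k]$ for every $s > 0$.

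The only substantive content lies not in the corollary itself but in Lemma \ref{lem:CDFT_ineq}, whose proof must confirm the convexity of the two pieces. The main obstacle there is showing that $\mathcal{F}_{CDFT,e}$ is convex despite containing the sign-indefinite interaction term $\frac{1}{2}(\rho \parallel J*\rho)$; this is where the added regularizing term $(J_e * 1)\parallel \rho \parallel_2^2$ must dominate, using the assumptions that $J = J_c - J_e$ with $J_c, J_e$ even and pointwise non-negative. Once that convexity is in hand, the corollary is a direct specialization requiring only the citation of the two preceding results.
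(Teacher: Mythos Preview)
Your proposal is correct and takes essentially the same approach as the paper: the paper's proof is the single line ``The proof follows from Theorem \ref{thm_energy_stability} and Lemma \ref{lem:CDFT_ineq},'' and you have simply unpacked that citation in more detail.
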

\begin{proof}
The proof follows from Theorem \ref{thm_energy_stability} and Lemma \ref{lem:CDFT_ineq}.
\end{proof}




\subsection{Proper Convex-Splitting Scheme for Fully Discrete PFC  Free Energy}
The fully discrete free energy functional the PFC model $\mathcal{F}_{PFC}:\mathcal{C}_{m \times n} \longrightarrow \mathbb{R}$ 
is given by :
\begin{equation}
\mathcal{F}_{PFC} [ \rho ] :=\frac{1}{12} \parallel \rho-\frac{3}{2} \parallel_4 + \frac{\alpha}{2} \parallel \rho-\frac{3}{2} \parallel_2 + \frac{1}{2} \parallel \Delta_h \rho \parallel_2 +  \left( \rho \parallel \Delta_h \rho\right),
\label{CS_PFC_discrete}
\end{equation}
where the last term is introduced to approximate $-\parallel \Grad \rho \parallel^2$ in Eq. (\ref{energy})  and the discrete norm $\parallel \cdot \parallel_4$ is defined in
\ref{appendix_norms}.
The corresponding energy estimate is now given by the following lemma.
\begin{lem} \label{lem:PFC_ineq}
Suppose $\rho \in \mathcal{C}_{m \times n}$ is periodic and $\Delta_h \rho$ is also periodic then the energies
\begin{equation}
\displaystyle \mathcal{F}_{PFC,c} [ \rho ] := \frac{1}{12} \parallel \rho-\frac{3}{2} \parallel_4 + \frac{\alpha}{2} \parallel \rho-\frac{3}{2} \parallel_2 + \frac{1}{2} \parallel \Delta_h \rho \parallel_2 ,
\label{CS_PFC_discrete1}
\end{equation} 
and
\begin{equation}
\displaystyle \mathcal{F}_{PFC,e} [ \rho ] :=   - \left(\rho \parallel \Delta_h \rho \right) ,
\label{CS_PFC_discrete2}
\end{equation}
are convex in $\rho$ and  $\mathcal{F}_{PFC}:=\mathcal{F}_{PFC,c} -\mathcal{F}_{PFC,e}  $ is a convex splitting of the $\mathcal{F}_{PFC}$.
The gradients of the energies are
 \begin{equation}
 \delta_{\rho} \mathcal{F}_{PFC,c}[\rho] = \frac{1}{3}\left( \rho-\frac{3}{2} \right)^3 + \alpha \left( \rho-\frac{3}{2} \right) + \Delta_h^2 \rho,
 \end{equation} 
  \begin{equation}
 \delta_{\rho} \mathcal{F}_{PFCT,e}[\rho] = - 2 \Delta_h \rho, 
 \end{equation}
 and the energy satisfies 
\begin{equation}
\mathcal{F}[\phi] - \mathcal{F}[\psi] \leq ( \delta_{\rho} \mathcal{F}_c[\phi] - \delta_{\rho} \mathcal{F}_e[\psi] \parallel \phi - \psi).
\label{PFC_ineq}
\end{equation}
\end{lem}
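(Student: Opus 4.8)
The plan is to follow the same route as the proof of Lemma \ref{lem:CDFT_ineq}: reduce the claim to verifying that each of $\mathcal{F}_{PFC,c}$ and $\mathcal{F}_{PFC,e}$ satisfies the second-derivative condition of Definition \ref{PCS}, so that $\mathcal{F}_{PFC} = \mathcal{F}_{PFC,c} - \mathcal{F}_{PFC,e}$ is a proper convex splitting, and then read off the inequality Eq. (\ref{PFC_ineq}) as a direct application of Theorem \ref{thm_estimate2}. First I would check that the splitting is consistent, i.e. that $\mathcal{F}_{PFC,c} - \mathcal{F}_{PFC,e}$ reproduces Eq. (\ref{CS_PFC_discrete}). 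This is immediate once $-(\rho \parallel \Delta_h \rho)$ is recognized as the term $(\rho \parallel \Delta_h \rho)$ with a sign flip; the discrete summation-by-parts identity $-(\rho \parallel \Delta_h \rho) = \parallel \Gradh \rho \parallel_2 \ge 0$ (valid for periodic grid functions, see \ref{appendix_norms}) confirms that $\mathcal{F}_{PFC,e}$ is the discrete Dirichlet energy discretizing $\int |\nabla \rho|^2$ of Eq. (\ref{CS_PFC2}) and is nonnegative.

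For the convexity of $\mathcal{F}_{PFC,c}$ I would compute $\frac{d^2}{dq^2}\mathcal{F}_{PFC,c}[\rho + q\phi]$ term by term for arbitrary periodic $\phi \in \mathcal{C}_{m \times n}$ and $q \in \mathbb{R}$. The quartic term contributes $h^2 \sum_{i,j}\bigl(\rho_{ij} + q\phi_{ij} - \tfrac{3}{2}\bigr)^2 \phi_{ij}^2 \ge 0$; the quadratic term contributes $\alpha \parallel \phi \parallel_2$; and the biharmonic term $\tfrac12 \parallel \Delta_h \rho \parallel_2$ contributes $\parallel \Delta_h \phi \parallel_2 \ge 0$, being a nonnegative quadratic form in $\rho$. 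The sum is manifestly nonnegative provided $\alpha \ge 0$, which is the relevant parameter regime; this positivity of the quadratic coefficient is the one genuinely necessary hypothesis, and it is the point I would flag explicitly, since the pointwise second derivative of the quartic-plus-quadratic density is $(\rho - \tfrac32)^2 + \alpha$, which fails to be nonnegative at $\rho = \tfrac32$ when $\alpha < 0$. For $\mathcal{F}_{PFC,e}$, expanding $-\bigl((\rho + q\phi) \parallel \Delta_h(\rho + q\phi)\bigr)$ and using self-adjointness of $\Delta_h$ gives second derivative $-2(\phi \parallel \Delta_h \phi) = 2\parallel \Gradh \phi \parallel_2 \ge 0$, so $\mathcal{F}_{PFC,e}$ is convex as well.

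The gradient formulas then follow from the same expansions by reading off the coefficient of the term linear in $q$ at $q=0$: the quartic and quadratic parts give $\tfrac13(\rho - \tfrac32)^3 + \alpha(\rho - \tfrac32)$, the biharmonic part gives $\Delta_h^2 \rho$ (applying self-adjointness of $\Delta_h$ twice, so that $(\Delta_h \rho \parallel \Delta_h \phi) = (\Delta_h^2 \rho \parallel \phi)$), and $\delta_{\rho}\mathcal{F}_{PFC,e} = -2\Delta_h \rho$. Since both parts satisfy Eq. (\ref{condition2}), the splitting is proper and Eq. (\ref{PFC_ineq}) is exactly the conclusion Eq. (\ref{ineq_CS}) of Theorem \ref{thm_estimate2} applied with $\phi, \psi$. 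I expect the only real subtlety to be the bookkeeping with the discrete operators, namely the self-adjointness of $\Delta_h$ and the summation-by-parts identities on periodic grid functions quoted from \ref{appendix_norms}, together with the dependence of the convexity of $\mathcal{F}_{PFC,c}$ on the sign of $\alpha$; the differential-calculus computations themselves are routine.
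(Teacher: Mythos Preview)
Your approach is correct and essentially identical to the paper's: the paper simply says the proof is analogous to that of Lemma~\ref{lem:CDFT_ineq}, which in turn amounts to checking $\frac{d^2}{dq^2}\mathcal{F}_{\alpha}[\rho+q\phi]\ge 0$ for $\alpha\in\{c,e\}$ and then invoking the proper-convex-splitting estimate (Theorem~\ref{thm_estimate2}). Your explicit flag that the convexity of $\mathcal{F}_{PFC,c}$ requires $\alpha\ge 0$ is a genuine hypothesis the paper leaves implicit (it holds in the regime $\alpha=1-\epsilon$ with $\epsilon<1$ used throughout).
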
 
\begin{proof}
The proof is similar to that of Lemma \ref{CDFT_estimate} and we omit the details for brevity.
\end{proof}

\begin{cor}
Suppose that $\rho^{k} \in \mathcal{C}_{\bar{m} \times \bar{n}}$ and $\bu^k \in \mathcal{C}_{\bar{m} \times \bar{n}} \times \mathcal{C}_{\bar{m} \times \bar{n}} $ for all $k\geq 0$ are periodic and
the discrete free energy is defined by Eqs. (\ref{CS_PFC_discrete})- (\ref{CS_PFC_discrete2}), then the scheme in Eqs. (\ref{discrete_a}) - (\ref{discrete_d}) is mass conserving and unconditionally energy stable for $s>0$.
\end{cor}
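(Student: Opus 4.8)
The plan is to obtain this corollary as a direct specialization of the general fully discrete result, Theorem \ref{thm_energy_stability}, exactly as was done for the CDFT case. The only thing that must be checked is that the particular PFC free energy (\ref{CS_PFC_discrete}) together with its splitting (\ref{CS_PFC_discrete1})--(\ref{CS_PFC_discrete2}) satisfies the single structural hypothesis (\ref{ineq2c}) that the theorem requires of a splitting $\mathcal{F} := \mathcal{F}_c - \mathcal{F}_e$.

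First I would invoke Lemma \ref{lem:PFC_ineq}, which supplies precisely this. That lemma establishes that $\mathcal{F}_{PFC,c}$ and $\mathcal{F}_{PFC,e}$ are each convex in $\rho$ and that the discrete inequality (\ref{PFC_ineq}),
\[
\mathcal{F}[\phi] - \mathcal{F}[\psi] \leq ( \delta_{\rho} \mathcal{F}_c[\phi] - \delta_{\rho} \mathcal{F}_e[\psi] \parallel \phi - \psi),
\]
holds for the PFC functional. Up to the labelling of the variational derivatives, this is identical to hypothesis (\ref{ineq2c}). With this verified, Theorem \ref{thm_energy_stability} applies with $\mathcal{F} = \mathcal{F}_{PFC}$, $\mathcal{F}_c = \mathcal{F}_{PFC,c}$, $\mathcal{F}_e = \mathcal{F}_{PFC,e}$, and its two conclusions transfer verbatim: the scheme (\ref{discrete_a})--(\ref{discrete_d}) conserves mass, $(\rho^{k+1}\parallel 1) = (\rho^k\parallel 1)$, and is unconditionally energy stable, $\mathcal{E}[\rho^{k+1},\bu^{k+1}] \leq \mathcal{E}[\rho^k,\bu^k]$, for every $s>0$.

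Since the energy-stability machinery is packaged entirely inside Theorem \ref{thm_energy_stability}, I anticipate no substantive obstacle here; the corollary is essentially a bookkeeping step. The one computation that does real work lives in Lemma \ref{lem:PFC_ineq}: one checks that $\frac{1}{12}\parallel \rho-\frac{3}{2}\parallel_4$ and $\frac{\alpha}{2}\parallel \rho-\frac{3}{2}\parallel_2$ have pointwise nonnegative second variations, that $\frac{1}{2}\parallel \Delta_h\rho\parallel_2$ is a positive semidefinite discrete quadratic form, and that the piece assigned to $\mathcal{F}_{PFC,e}$ equals $\parallel\Gradh\rho\parallel_2$ after discrete summation by parts and is therefore itself convex. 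These discrete summation-by-parts and self-adjointness properties of $\Delta_h$ are the same ones already used to prove Theorem \ref{thm_energy_stability}, so once Lemma \ref{lem:PFC_ineq} is granted the corollary follows immediately.
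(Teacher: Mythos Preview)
Your proposal is correct and matches the paper's approach exactly: the paper's proof is the single sentence ``The proof follows from Theorem \ref{thm_energy_stability} and Lemma \ref{lem:PFC_ineq},'' which is precisely the two-step invocation you describe. Your additional commentary about what Lemma \ref{lem:PFC_ineq} is actually checking is accurate elaboration but not part of the corollary's proof itself.
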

\begin{proof}
The proof follows from Theorem \ref{thm_energy_stability} and Lemma \ref{lem:PFC_ineq}.
\end{proof}




\section{Numerical Results} \label{section:numerical}
To solve the nonlinear system at the implicit time level, we use a standard non-linear Full Approximation Scheme (FAS) (see \cite{Trottenberg}).
The implementation of the prolongation and restriction operations for the multigrid solver are presented in detail in \cite{Baskaran2010}
in the context of a linear problem. 
The Gauss-Seidel smoothing scheme used in the FAS method is presented in \ref{appendix_MGDFT} and \ref{appendix_MGPFC} respectively for the DFT and PFC models.
Further we note that we use one smoothing iteration per level in all the simulations presented in this paper.
Because the density field takes the form
of sharp Gaussian like peaks on a lattice with near vacuum in the interstitial spaces as observed in \cite{Baskaran2014} in the case of the CDFT models, two modifications to the method are introduced:
\begin{enumerate}
\item The first is to introduce a projection step for the density field.
We use a projection step where we replace $\rho$ by its positive projection $ \sqrt{ \rho^2 + \delta^2}$. 
Here $\delta$ is a regularization parameter chosen to be $10^{-10}$ in the case of the CDFT model.
This allows us to handle the vacuum seamlessly and ensure that the $\log(\rho)$ term does not receive a negative input in the vacuum region.
The parameter $\delta$ is set to 0 in the PFC case.
\item The second modification is to damp the Gauss-Seidel iterations and slowing down the approach 
to the solution (see \ref{DAMP}).
This prevents the method from overshooting the vacuum and producing negative densities.
Although preliminary simulations indicate that damping alone might be sufficient to maintain positivity of the density
the combination of damping and projection allows us to use minimal damping and faster convergence.
\end{enumerate}  
In the reminder of this section we present numerical evidence for convergence and energy-stability of the schemes.
We also present some simulations illustrating the mechanisms of the solid-liquid phase transition in the model and 
the ability of driven flows to steer the system away from equilibrium. 
We use the following model convolution kernel:
\begin{equation}
J({\bf x}) = \sqrt{2} \exp \left( - \frac{\pi^2}{2\nu^2}{\bf x}^2 \right) -  \exp \left( - \frac{\pi^2}{4\nu^2}{\bf x}^2 \right), 
\end{equation}
where $\nu$ is a constant spatial scaling.
The specific form is chosen to model a two particle direct correlation function \cite{Hansen2006} with a single peak in Fourier space.
The parameter $\nu$ is chosen to recover a phase transition qualitatively similar to the one presented in \cite{Baskaran2014}.
The function $J_e$ is chosen to be the function :
\begin{equation}
J_e({\bf x}) = \left\{ \begin{array}{lr} -J({\bf x}) & \text{if} \quad J({\bf x}) \leq 0\\ 0 & \text{otherwise} \end{array} \right. .
\end{equation}
$J_c$ is now defined by the relation $J_c = J + J_e$.
All simulations presented in this work use a residual tolerance of $10^{-12}$ for the PFC model and
$10^{-14}$ for the CDFT model. 
The viscosity coefficient $\gamma$ is set to $\gamma = 2$ in all simulations. 


\subsection{Convergence in time}
In order to estimate the convergence rate we evolve the system using same initial data with increasingly finer grid resolutions.
The time step refinement path is chosen to be $s = 0.025h^2$. 
This choice is not motivated by a time step restriction as the scheme is unconditionally energy stable.
Since the scheme is first order accurate in time and second order accurate in space the refinement path of
$s = 0.025h^2$ should predict a global error of order $O(h^2)$.
This is verified numerically below.




\subsubsection{Hydrodynamic CDFT Model Convergence Test}
\label{Convergence_DFT}
For the case of the hydrodynamic CDFT model we use smooth initial conditions analogous to those previously used for the PFC models in \cite{WISE2009_2, MPFC2013}.
The density field used is 
\begin{equation}
\begin{array}{rl}
\displaystyle \rho(x,y,t=0) =& \displaystyle  \rho_o - 0.02 \cos \left( \frac{2 \pi ( x - 12) }{32} \right) \sin\left( \frac{2 \pi ( y-1 ) }{32} \right) \\
						& \displaystyle + 0.02 \cos^2 \left( \frac{\pi(x+10)}{32} \right) \cos^2 \left( \frac{\pi(y +3)}{32} \right) \\
						 &\displaystyle -0.01 \sin^2\left( \frac{4\pi x}{32} \right) \sin^2 \left( \frac{4\pi(y-6)}{32} \right),
\end{array}
\end{equation}
and $\rho_o = \frac{\pi}{6} \times 0.6 $ which is the density corresponding to packing fraction 0.6 (see \cite{Baskaran2014}).
The initial velocity field is set to $\bu =0$, i.e., a stationary field.
The parameter $\nu =1$ in this test case.

To estimate the convergence rate we start with a grid spacing $h$ and time evolve the system to $t_f=10$ using 
three different grid spacings $h/2,h$ and $2h$.
Given solutions $\rho^h$ on a grid of spacing $h$, we define the Cauchy error between two solutions $\rho^h$ and $\rho^{h/2}$ as
\begin{equation}
e^{h; h/2}_{ij} = \rho^h_{i,j} - \rho^{h/2}_{2i,2j}.
\end{equation}
The convergence rate is then defined as 
\begin{equation}
\log_2\left( \frac{ \parallel e^{2h;h}\parallel_2}{ \parallel e^{h;h/2}\parallel_2} \right).
\end{equation}
The results for  grid sizes $16^2, 32^2,64^2,128^2, 256^2$ and $512^2$ are characterized in Tables  \ref{Table_DFT1}-\ref{Table_DFT3} for
the density field and the two components of the velocity field.
The results suggest that the method is in fact first order accurate in time and second order accurate in space.
The energy evolution shown in Figure \ref{fig_DFT_t1} demonstrates that the total energy of the system is in fact non-increasing.
The figure also shows a striking feature of the model observed in \cite{Baskaran2014} that the model does not preserve monotonicity of the kinetic 
energy or the free energy of the system.




\subsubsection{Hydrodynamic PFC Model Convergence Test}
\label{Convergence_PFC}
We use a smooth initial condition analogous to the one previously used for the CDFT convergence test given by
\begin{equation}
\begin{array}{rl}
\displaystyle \rho(x,y,t=0) =&\displaystyle \sqrt{3} \left[   -0.07 - 0.02 \cos \left( \frac{2 \pi ( x - 12) }{32} \right) \sin\left( \frac{2 \pi ( y-1 ) }{32} \right)\right. \\
						 &\displaystyle + 0.02 \cos^2 \left( \frac{\pi(x+10)}{32} \right) \cos^2 \left( \frac{\pi(y +3)}{32} \right) \\
						 & \displaystyle \left. -0.01 \sin^2\left( \frac{4\pi x}{32} \right) \sin^2 \left( \frac{4\pi(y-6)}{32} \right) \right] +\frac{1}{2},
\end{array}
\end{equation}
where $(x,y)^T \in \Omega := [0,32) \times [0,32)$.
This corresponds to the initial conditions used in the standard PFC model convergence test case presented  in \cite{WISE2009_2, MPFC2013}.
With this as the density field and stationary initial velocity field we choose parameter $\epsilon = 0.025$ and time evolve the 
system to $t_f = 10$.
These parameters predict a non-uniform steady state corresponding to a solid phase (demonstrated in Section  \ref{freezing}).
The other parameters are chosen to be identical to the CDFT model presented in the previous section.
The results summarized in Tables \ref{Table_PFC1}-\ref{Table_PFC3} suggest that the method is first order accurate in time and second order accurate in space
for the PFC free energy as well.
The total energy is also observed to be non-increasing (see Figure \ref{fig_PFC_t1}).
It can be seen in Figure \ref{fig_PFC_t1} that the hydrodynamic PFC model also does not preserve
the monotonicity of the free energy or the kinetic energy.




\subsection{Simulation of Solid Liquid Phase Transition}
\label{freezing}
Next we simulate freezing of a super-cooled liquid using the two models.
In both cases we start the simulation using a random perturbation of a constant density field 
 $\rho = \bar{\rho} ( 1 + 0.1\eta )$, where $\eta$ is a random number in $[0,1]$.
The average density $\bar{\rho}$ is chosen to lie in the solid phase.
The  simulation of the CDFT model is performed with $\bar{\rho} = \frac{\pi}{6} \times 0.6$  corresponding to 
packing fraction 0.6.
The scaling parameter is chosen to be $\nu =2.362$.
 The simulation is performed on a domain  $\Omega = [0,17) \times [0,17)$ with time step $s = 0.02$
 with $128 \times 128$ uniform discretization.
The discrete Fourier image $\hat{J}(k_x,k_y)$ is shown in Figure \ref{fig_conv},
 where 
 \begin{equation}
 \hat{J}(k_x,k_y) = \sum_{\ell=1}^{m} \sum_{j=1}^{n} J(x_\ell,x_j) \exp\left( - i \frac{2\pi (x_\ell k_x + y_j k_y)}{L} \right)
 \end{equation}
 and $(x_\ell,y_j) = (\ell h,jh)$ and $(k_x, k_y)$ belong in a $128 \times 128$ discretization of $[-\pi, \pi) \times [-\pi,\pi)$.
 Observe that there exists wave numbers ${\bf k} = (k_x,k_y)^T$ such that $1- \bar{\rho} \hat{J}({\bf k}) < 0$ so that the 
 homogeneous density $\bar{\rho}$ is unstable, as described in \cite{Baskaran2014}. 
The time evolution of the density and velocity fields are shown in Figure \ref{fig_DFT_t2b}.
The figure  shows how the flow field drives the density field to form the peaks located on
a lattice that represent the solid phase.
The corresponding energy evolution is shown in Figure \ref{fig_DFT_t2a}.
It is seen that the total energy is a non-increasing function.
Figure \ref{fig_DFT_t2a} shows that during the liquid to solid phase transition the
ideal gas part of the free energy increases because the system is acquiring order. 
Concomitantly, the excess part of the free energy decreases.
The total Helmholtz free energy and the total energy of the system are observed to be decreasing functions
of time. Once again the kinetic energy is a non-monotone function of time showing a sharp increase
in kinetic energy at approximately time $t \approx 475$, which corresponds to the sharp decrease in the free energy and total energy
of the system. This corresponds to the fast relaxation and pattern formation during the freezing process (see Figure \ref{fig_DFT_t2b}).
Another sharp decay in the free energy is found at around $t = 25000$.
A closer investigation of the density field at $t =20000$ and $t=28000$ (Figure \ref{fig_DFT_t2b}) 
shows that at $t=20000$ the density field has a defect in the crystal lattice pattern which is relaxed by
the removal of a few peaks (see $t=28000$). The rapid decay in energy near $t=25000$ corresponds
to this process. 
This demonstrates the ability of the numerical method to capture the rich behavior of the hydrodynamic DFT model.


An analogous simulation for  the PFC model is run with $\epsilon = 0.025$, 
$\bar{\rho} =  -0.07\sqrt{3} + \frac{1}{2}$
and $\Omega = [0,32) \times [0,32)$. The simulation is performed on a $128 \times 128$ grid  with $s = 0.01$.
The evolution of the density field during the freezing transition is shown in Figure \ref{fig_PFC_t2b}.
The snapshots of the density with the flow field superimposed on it is shown on a zoomed region 
 along side the density field on the full domain in Figure \ref{fig_PFC_t2b}.
The time evolution of the energy and its components are shown in Figure \ref{fig_PFC_t2a}.
It is once again observed that the total energy and the free energy are decreasing functions 
of time while the kinetic energy is non-monotone.
Consistent with the nonlocal model, the kinetic energy again shows sharp increases in the regions corresponding to the rapid decay
of the free energy and total energy.




\subsection{Simulation of the Effect of Flow on Phase Transition in PFC Model}
\label{flow_PFC}
In order to explore a practical application we study the effect of flow on a nanocrystal suspended in
its liquid phase in a driven channel.
The system considered is a channel represented by the domain $\Omega =[0,L) \times [0,L)$,
with $L = 256 h$ where $h = \frac{\pi}{2\sqrt{3}}$. The boundary conditions are periodic in the 
horizontal direction and no slip  moving wall boundary in the vertical direction: $\bu_{i,n} = (u_{wall,n},0)^T$ and $\bu_{i,0} = (u_{wall,0},0)^T$ at the boundary, where $u_{wall,\cdot}$ corresponds to the horizontal velocity of the walls at the top ($n$) and bottom ($0$).
No-flux boundary conditions for the density are imposed at the walls: 
$\rho_{i,n} = \rho_{i,n-1}$ and $\rho_{i,0} = \rho_{i,1}$. 

The crystal sample for the test is prepared by placing a crystal that is in equilibrium with the surrounding liquid
and annealing it.
The parameter $\epsilon = 0.4$ is chosen where the co-existence region corresponds to $\rho_l = 0.72958$  to
$\rho_s = 0.90073$.
The solid phase under the one mode approximation takes the form 
\begin{equation}
\rho(x,y) := \rho_s + A \left( \cos \left( \frac{\sqrt{3}}{2} x + \pi \right) \cos \left( \frac{1}{2} y \right) -\frac{1}{2} \cos(y) \right),
\label{solid}
\end{equation} 
where
\begin{equation}
A = \frac{4}{5\sqrt{3}} \left(\rho_s - \frac{3}{2} \right) - \frac{4\sqrt{3}}{15} \sqrt{ 15\epsilon - 12\left(\rho_s -\frac{3}{2}\right)^2}.
\end{equation}
The initial sample is prepared by placing a solid phase given by Eq. (\ref{solid}) in a hexagonal region whose 
diagonal has the dimension $13a$ centered around the point $(L/2 , L/2)$.
This hexagonal solid is surrounded by a liquid phase corresponding to homogeneous density $\rho_l$.
Here $a$ represents the size of an atom $a = 4 \pi / \sqrt{3}$ which is the length of one period in $x$ of the 
solid phase, one mode solution (Eq. (\ref{solid})).
Thus the hexagonal nucleate shown in Figure \ref{fig_nucleate} is 13 atoms wide along the main diagonal.
The solid is expected to be thermodynamic equilibrium with the surrounding liquid.
This initial condition is annealed until time reaches $t=20000$, with wall velocity $u_{wall, 0/n} =0$ (see final density field in Figure \ref{fig_nucleate}), where all components of the energy have equilibrated to a tolerance of $10^{-6}$.
All simulations in this test are performed at a step size of $s= 0.02$.
The equilibrium field is not much different from the initial condition due to the specific choice of $\rho$.
During the annealing, however, the interface between the solid and the liquid becomes diffuse as expected.
Thus the equilibrium configuration consists of a stationary crystal surrounded by a quiescent liquid.

In order to study the effect of flow on the equilibrium crystal we now drive the system by moving the walls 
and setting up a parallel flow in the liquid.
 It is worth noting that the case where the wall moves with a non-zero velocity corresponds to a
driven system. 
In this case the driving force from the wall will allow the system to be driven away from equilibrium. 
The system is no longer energy stable in the sense that the system will no longer follow the gradients of the energy to reach
thermodynamic equilibrium.
The results of the evolution at two different wall speeds ($u_{wall,0/n} = \pm 0.1$ and $u_{wall,0/n} = \pm 0.5$) are shown in 
Figures. \ref{fig_shear1} and \ref{fig_shear2} respectively.
The evolution in the low shear case shows that the crystal does not change shape or size but merely
rotates as the fluid flow is set up.
A close-up of the velocity field that shows the rotation is presented in Figure \ref{fig_shear1_v}.
However in the high shear case the crystal first begins to rotate and then change shape.
As time progresses the crystal nucleate shrinks in size and settles into 
a steady smaller size.
A closer investigation of the long time evolution (not shown) does indicate that 
the crystal stops shrinking beyond a critical size and continues to rotate in the fluid.
This indicates that a system can be driven by flows to settle into 
a different phase co-existence than the one predicted by the stationary equilibrium phase diagram.
This mechanism can be of critical importance in the growth of nanocrystals from a solution or liquid phase.





\section{Conclusion}
\label{section:conclusion}
In this paper, we developed energy-stable, fully-discrete numerical methods for hydrodynamic-CDFT models in which compressible flow governed by the isothermal Navier-Stokes equations is driven by a free energy gradient.
An efficient nonlinear multigrid method was proposed and implemented to solve the implicit scheme.
Although we did not demonstrate that the implicit discretization is uniquely solvable for any choice of time and space steps, the multigrid method was always able to solve the system without difficulty for a wide range of temporal and spatial grid sizes.

Numerical simulations of both local (PFC) and nonlocal (CDFT) models were presented that demonstrate that the schemes are first order accurate in time and second order accurate in space.
The energy stability and the ability of the methods to capture solid liquid phase transition in the model was verified numerically.
Simulations illustrating the ability of the methods to capture the effect of flow on freezing transition and non-equilibrium
properties of solid-liquid phase co-existence were presented.
These simulations demonstrated the predictive ability of the model and the robustness of the
numerical method in capturing physically relevant solutions. 

 While first-order in time energy-stable methods, such as that presented here, are known to be significantly dissipative, we view this work as a necessary first step towards the development of second-order accurate (or higher) energy stable methods that would be significantly less dissipative. Nevertheless, the results presented here are accurate. The development of higher order accurate methods, as well as a more thorough investigation of the effect of flow using this model, and a Stokes flow counterpart, will be undertaken in future work.

\paragraph{Acknowledgements}

The authors gratefully acknowledge partial support from NSF Grants NSF-CHE 1035218, NSF-DMR 1105409, and NSF-DMS 1217273.
\let\thefigureSAVED\thefigure
\let\thetableSAVED\thetable
\begin{appendix}

\section{Proof of Energy Dissipation in the General Hydrodynamic Model}
\label{general_energy}



In this appendix we present a proof of Eq. (\ref{energy_derivative}). Note that by taking the time derivative of
 Eq. (\ref{total_energy}) we have
 \begin{equation}
 \label{energy_d1}
 \frac{d \mathcal{E} }{d t} = \frac{1}{2} \int_{\Omega} \partial_t \rho \mid \bu \mid^2 \quad d \br + \int_{\Omega}  \rho \bu \cdot \partial_t \bu  \quad d \br + \int_\Omega \frac{\delta F}{\delta \rho} \cdot \partial_t \rho \quad d \br.
 \end{equation}
Next we note that the hydrodynamic equations Eq. (\ref{Eqn:Hydro_damp}) can be written in primitive variable form  as
\begin{eqnarray}
\label{hydro_prim2}
\displaystyle\partial _{t}\rho +\Grad \cdot \left(\rho \mathbf{u}\right) =0, \\
\label{hydro_prim3}
\displaystyle \rho \left( \partial _{t} \mathbf{u} + \bu \cdot \Grad \bu \right) =\displaystyle-\rho 
\Grad \left( \frac{\delta \mathcal{F}}{\delta \rho }\right) + \gamma \Grad^2 \bu.
\end{eqnarray}%
Using Eqs. (\ref{hydro_prim2}) and (\ref{hydro_prim3}) to eliminate the time derivative in the first and second integral we have
\begin{equation}
\begin{array}{rl}
\displaystyle \frac{d \mathcal{E} }{d t}  =& \displaystyle   \cancel{ \frac{1}{2} \int _\Omega -\Div (\rho \bu) \mid \bu \mid^2 d \br} +  \int_{\Omega}  \rho \bu \cdot \left( \cancel{- \bu \cdot \Grad \bu} -\Grad \left( \frac{\delta F}{\delta \rho} \right) + \frac{\gamma}{\rho} \Grad^2 \bu \right) d \br   \\
& \displaystyle + \int_\Omega \frac{\delta F}{\delta \rho} \cdot \partial_t \rho  \quad  d \br \\
= & \displaystyle \cancel{ \int_\Omega \Div (\rho \bu) \frac{\delta F}{\delta \rho}  \quad  d \br} + \int_\Omega \gamma \bu \cdot \Grad^2 \bu  \quad d \br+ \cancel{ \int_\Omega \frac{\delta F}{\delta \rho} \cdot \partial_t \rho \quad d \br } \\
= & \displaystyle + \int_\Omega \gamma \bu \cdot \Grad^2 \bu \quad d \br \\
\leq & 0.
\end{array}
\end{equation}
In the above calculation we have used integration by parts in steps 1 and step 2.
Further the cancelation in step 2 is achieved by the use of the continuity equation.



\section{Derivation of PFC model}
\label{PFC_derivation}
In order to derive an appropriate phase field crystal approximation we follow the work of 
Van Teeffelen et al \cite{VanTeeffelen2009} and Jaatinen et al \cite{Akusti}.
First consider the CDFT free energy given by

\begin{equation}
\mathcal{F}_{CDFT} [ \rho ] = \mathcal{F}_{id} [ \rho] + \mathcal{F}_{ex} [\rho],
\label{energy_DFT2}
\end{equation}
where
\begin{equation}
\mathcal{F}_{id} [ \rho ] := \int_{\Omega} \rho (\ln ( \rho) - 1) d \br,
\end{equation}
and 
\begin{equation}
\mathcal{F}_{ex} [ \rho ] := -\frac{1}{2} \int_{\Omega} \rho ( J * \rho) d \br.
\end{equation}
The above free energy is approximated in two steps.
\subsection{Approximation of the Excess free energy}
Taking advantage of the fact that the dynamics is conserved i.e $\partial_t \int_{\Omega} \rho d \br =0$ we 
can easily rewrite the  excess free energy as

\begin{equation}
\mathcal{F}_{ex} [ \rho ] := \frac{1}{2} \rho_{ref}^2 \int_{\Omega} (\rho - \rho_{ref} ) \left( J * (\rho - \rho_{ref})\right) d \br,
\end{equation}
where $\rho_{ref}$ is scalar value representing 
a homogeneous reference density field.
Next we take advantage of the radial symmetry of the convolution kernel to expand it as a Taylor series in 
Fourier space about the zero mode:
\begin{equation}
\hat{J}(k) = C_o + C_2 k^2 + C_4 k^4 + \ldots,
\end{equation}
where $k$ is the Fourier variable. 
Now inserting the truncated form of the series into the free energy we have 
\begin{equation}
\label{expression1}
\mathcal{F}_{ex} [ \rho ] := \frac{1}{2} \rho_{ref}^2 \int_{\Omega} \phi ( C_o - C_2 \Grad^2 + C_4 \Grad^4) \phi d {\bf r},
\end{equation}
where $\phi : = \frac{\rho - \rho_{ref}}{\rho_{ref}}$.
Note that we have used the the real space representation of the powers of the Fourier variables in terms of the gradients.
Also note that this expression for the excess free energy is no longer non-local.
The excess free energy now only depends on the local gradients of the density field.

\subsection{Approximation of the ideal gas part of free energy}
In a similar manner we will approximate the ideal gas part of the free energy as follows
\begin{equation}
\begin{array}{rl}
\displaystyle \mathcal{F}_{id} [ \rho ] & \displaystyle  = \rho_{ref} \int_{\Omega} (\phi +1)[ \ln ( \phi +1) + \ln( \rho_{ref} ) -1 ] d \br, \\
 & \displaystyle =  \rho_{ref} \int_{\Omega} \left[ \frac{\phi^2}{2} - \frac{\phi^3}{6} + \frac{\phi^4}{12} + \ldots + \ln(\rho_{ref} )(\phi +1) -1 \right] d\br,
 \end{array}
\end{equation}
where we have Taylor expanded the logarithm term about $\phi = 1$.
Note that if $\rho_{ref}$ is chosen to be close to average density there is reasonable expectation for $\phi$ to be small.
Finally we note that the terms corresponding to $\ln(\rho_{ref} )(\phi +1) -1$ do not contribute to the dynamics
 as the dynamical equations only depend on $\Grad \delta_{\rho} \mathcal{F}[\rho]$.
 This term does not alter the equilibrium as long as the mass conservation is enforced.
 Thus we drop these term and truncate the expansion to obtain the approximate expression
 \begin{equation}
 \label{expression2}
 \mathcal{F}_{id} [ \rho ] \approx  \rho_{ref} \int_{\Omega} \left[ \frac{\phi^2}{2} - \frac{\phi^3}{6} + \frac{\phi^4}{12}\right] d {\bf r}.
 \end{equation}
\subsection{Phase Field Crystal Model}
Combining the two expressions (Eq. \ref{expression1} and \ref{expression2})  for the approximate free energies and re-arranging the terms we obtain the following expression
\begin{equation}
\mathcal{F}[\rho] = \rho_{ref} \left[ \int_{\Omega} \left( - \frac{\phi^3}{6} + \frac{\phi^4}{12} \right) d {\bf r} + \frac{1}{2} \int_{\Omega} \phi [ \zeta + \lambda ( q_o^2 + \Grad^2 )^2] \phi d {\bf r}  \right],
\end{equation}
where the free constants $C_o,C_2 $ and $C_4$ have been replaced without loss of generality with new constants $\lambda, q_o$ and $\zeta$.
The above expressions can further be simplified by noting that the dynamical equations are conservative (i.e.,  $\partial_t \int_{\Omega} \phi d {\bf x} =0$)
and that the dynamics only depends on the first variational derivative of the free energy.
Thus  the terms which are multiples of $\int_{\Omega} \phi^2 d {\bf r} $ do not contribute
to the dynamics and can be added and subtracted freely to complete powers.
Now we introduce the change of variables ${\bf r'} = q_o {\bf r} $, $\psi = \sqrt{ \frac{1}{3\lambda q_o^4} }\left( \phi - \frac{1}{2} \right)$ and $ \epsilon = \frac{1}{\lambda q_o^4} \left( \frac{1}{4} - \zeta \right)$
and dropping terms that are linear in $\phi$ without loss of generality we have
\begin{equation}
\tilde{\mathcal{F}}[\psi]  = (3 \lambda^2 q_o^5) \int_{\Omega}  \frac{\psi}{2} \left[ - \epsilon + ( 1 + \Grad^2 )^2 ] \psi + \frac{1}{2} \psi^4 \right] d {\bf r'}.
\end{equation}
Note that the above expression is the standard PFC free energy \cite{Elder2004} or the standard Swift-Hohenberg free energy \cite{PFC_book}.
Through appropriate non-dimensionalization we can choose the constants  $q_o = \lambda = \rho_{ref} =1$ (\cite{Akusti}).
For this choice of $q_o, \lambda$ and $\rho_{ref}$ the phase and density fields are related by:
\begin{equation}
\psi = \frac{1}{\sqrt{3}} \left( \rho - \frac{3}{2} \right).  
\end{equation} 
and the approximate free energy can be written as
\begin{equation}
\begin{array}{rl}
\displaystyle \mathcal{F}_{PFC}[\psi] & \displaystyle  = 3 \int_{\Omega}\frac{\psi}{2} \left[ - \epsilon + ( 1 + \Grad^2 )^2 \right] \psi + \frac{1}{2} \psi^4 d \br, \\
&\displaystyle = \int_{\Omega}   \left \{ \frac{1}{12} \left( \rho-\frac{3}{2} \right)^4 + \frac{\alpha}{2} \left( \rho-\frac{3}{2} \right)^2 - |\nabla \rho |^2 + \frac{1}{2} (\Delta \rho)^2 \right \} d \br .
\end{array}
\end{equation}
where $\alpha: = 1-\epsilon$.
This is the expression used this work.
Also in the above equation the $'$ is dropped from the scaled ${\bf r}'$.



\section{Proof of Convex Splitting Estimate}
\label{Proof_Estimate}
Before we prove the Theorem \ref{thm_estimate2} we need to obtain the following estimate.
	\begin{thm}
	\label{thm_estimate1}
	Consider a free energy functional $\mathcal{F}[\rho]: H \subset L_{per}^2(\Omega) \to \mathbb{R}$, where $H \subset L_{per}^2(\Omega)$ is a Hilbert space of sufficiently regular periodic functions 
	$\rho: \Omega \to \mathbb{R}$ with $\Omega = [0,L_x) \times [0,L_y)$. 
	Then 
	\begin{equation}
	\mathcal{F}[\phi] - \mathcal{F}[\psi] \geq ( \delta_{\psi} \mathcal{F}[\psi], \phi - \psi)_2  \qquad \forall \phi, \psi \in H, \phi \neq \psi        
	\label{condition1a}
	\end{equation}
	if 
	\begin{equation}
	\frac{d^2}{d \epsilon^2} \mathcal{F} [ \rho +\epsilon v] \geq 0 \qquad \forall \rho, v \in H, \epsilon \in \mathbb{R}, 
	\label{condition2a}
	\end{equation}
	where $(\cdot,\cdot)_2$ is the usual $L_2$ inner product and $\delta_{\rho}$ represents the variational derivative with respect to $\rho$.
	\end{thm}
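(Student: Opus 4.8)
The plan is to reduce the infinite-dimensional inequality to the elementary one-variable fact that a convex function of a real variable lies above its tangent line. To that end I would fix $\phi,\psi \in H$ with $\phi \neq \psi$, set $v := \phi - \psi \in H$, and introduce the scalar function
\[
g(\epsilon) := \mathcal{F}[\psi + \epsilon v], \qquad \epsilon \in \mathbb{R}.
\]
Then $g(0) = \mathcal{F}[\psi]$ and $g(1) = \mathcal{F}[\phi]$, so the left-hand side of the target estimate (\ref{condition1a}) is exactly $g(1) - g(0)$, and the whole theorem becomes a statement about $g$ alone.

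The first computation is $g'$. By the defining property of the variational derivative together with the chain rule, for each fixed $\epsilon$,
\[
g'(\epsilon) = \left.\frac{d}{ds}\right|_{s=0}\mathcal{F}\big[(\psi + \epsilon v) + s v\big] = \big( \delta_{\rho}\mathcal{F}[\psi + \epsilon v],\, v \big)_2 ,
\]
so in particular $g'(0) = ( \delta_{\psi}\mathcal{F}[\psi],\, \phi - \psi )_2$, which is precisely the right-hand side of (\ref{condition1a}). Hence the theorem is equivalent to the scalar inequality $g(1) - g(0) \geq g'(0)$.

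Next I would invoke the hypothesis. Applying (\ref{condition2a}) with base point $\rho = \psi$ and direction $v = \phi - \psi$ yields $g''(\epsilon) \geq 0$ for all $\epsilon \in \mathbb{R}$, so $g$ is convex and $g'$ is non-decreasing. I would then close the argument with the fundamental theorem of calculus: since $g'(\epsilon) \geq g'(0)$ for every $\epsilon \in [0,1]$,
\[
g(1) - g(0) = \int_0^1 g'(\epsilon)\, d\epsilon \;\geq\; \int_0^1 g'(0)\, d\epsilon \;=\; g'(0),
\]
and translating back to $\mathcal{F}$, $\phi$, $\psi$ gives exactly (\ref{condition1a}).

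The only genuine subtlety — and the step that I expect to lean on the standing \emph{sufficiently regular} hypothesis on $H$ and $\mathcal{F}$ — is the differentiation step: justifying that $g$ is twice continuously differentiable, that the chain-rule identity for $g'$ above is valid, and that $g''(\epsilon)$ is the same second variation that appears in (\ref{condition2a}). For the concrete functionals of interest (the logarithmic/polynomial ideal-gas term, the quadratic convolution term, and the gradient terms) these are routine differentiations under the integral sign, with the evenness of the kernel used to identify the first variation of the nonlocal term. No compactness, minimization, or existence argument is required; the entire proof is the one-dimensional tangent-line inequality expressed through the $L_2$ representation of the first variation.
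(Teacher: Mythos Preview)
Your proof is correct and is essentially the same as the paper's: both fix $\psi$ and $v=\phi-\psi$, consider the scalar function $g(\epsilon)=\mathcal{F}[\psi+\epsilon v]$, use hypothesis (\ref{condition2a}) to conclude $g$ is convex, and then invoke the tangent-line inequality $g(1)-g(0)\ge g'(0)$ together with the identification $g'(0)=(\delta_\psi\mathcal{F}[\psi],\phi-\psi)_2$. The only cosmetic difference is that you spell out the tangent-line inequality via the fundamental theorem of calculus, whereas the paper simply quotes it as a consequence of convexity.
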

	\begin{proof}
	First we define a function $G_{v,\rho}(\epsilon) : \mathbb{R} \to \mathbb{R}$ for  a fixed $\rho,v \in H$ as $G_{v,\rho}(\epsilon) := F[\rho + \epsilon v]$.
	Now let us consider the case where Eq. (\ref{condition2a}) is true, i.e, $G_{\psi,v}(\epsilon)$ is a convex function of $\epsilon$ for all $\psi,v \in H$.
	From the convexity of $G_{\psi,v}(\epsilon)$ we have 
	\begin{equation}
	\label{ineq_C1}
	G_{v,\psi}(\delta) - G_{v,\psi}(0) = F[\psi + \delta v] - F[\psi] \geq \delta\lim_{\epsilon \to 0} \frac{d}{d\epsilon} \mathcal{F} [ \psi + \epsilon v]  \qquad  \forall \psi, v \in H, \delta \in \mathbb{R}.  
	\end{equation}
	This gives us the inequality 
	\begin{equation}
	F[\psi+ \delta v] -F[ \psi]  \geq ( \delta_{\psi} \mathcal{F}, \psi + \delta v - \psi )_2  \qquad \forall \psi, v \in H, 
	\label{rel1}
	\end{equation}
	where the defining relation for the variational derivative (in the Frech\'et sense) 
	\begin{equation}
	\lim_{\epsilon \to 0} \frac{d}{d \epsilon} F[\psi + \epsilon v] = \left( \delta_{\psi} F[\psi], v \right)_2,
	\end{equation}
	is used.
	Now identifying $\psi + \delta v = \phi$  in Eq. (\ref{rel1}) we have the relation in Eq. (\ref{condition1a}),
	this completes the proof of the theorem. 
           \end{proof}
          It is worth noting at this point that Eq. (\ref{ineq_C1}) must hold for all $\delta \in \mathbb{R}$. This requires 
           Eq. (\ref{condition2}) to hold and not just  the limit as given in Eq. (\ref{condition22a}).           
           Since we prove Theorem \ref{thm_estimate2} using Theorem \ref{thm_estimate1}, 
           Theorem \ref{thm_estimate2} requires a proper convex splitting (defined in Definition. \ref{PCS}) rather than a simple convex splitting. \\
                      
         \noindent
        Now we are ready to prove Theorem \ref{thm_estimate2}.
	\begin{proof}(Theorem \ref{thm_estimate2}) \\
	Using Theorem \ref{thm_estimate1} we have 
	\begin{equation}
	\mathcal{F}_c[\psi] - \mathcal{F}_c[\phi] \geq ( \delta_{\phi} \mathcal{F}_c[\phi],\psi - \phi)_2          
	\label{ineq_a}
	\end{equation}
	and 
	\begin{equation}
	\mathcal{F}_e[\phi] - \mathcal{F}_e[\psi] \geq ( \delta_{\psi} \mathcal{F}_e[\psi],\phi - \psi)_2.          
	\label{ineq_b}
	\end{equation}
	Adding the two inequalities above we obtain the result.
	\end{proof}




\section{Simplification of Energy Dissipation rate} \label{Appendix_Tensor}
In this appendix we prove the identity
\begin{equation}
\int_{\Omega} \bu^{k+\frac{1}{2}} \cdot \Div \mathcal{D}^{k+\frac{1}{2}} d\br = - \frac{1}{2} \int_{\Omega} \mathcal{D}^{k+\frac{1}{2}} : \mathcal{D}^{k+\frac{1}{2}} d \br.
\end{equation}
Integrating by parts and using the symmetry of $\mathcal{D}$ we have 
 \begin{equation}
 \begin{array}{rl}
\displaystyle \int_{\Omega} \bu^{k+\frac{1}{2}} \cdot \Div \mathcal{D}^{k+\frac{1}{2}} d\br,  
& \displaystyle=   
- \frac{1}{2} \int_{\Omega}  \left( \Grad \bu^{k+\frac{1}{2}}  + ( \Grad \bu^{k+\frac{1}{2}})^T \right) : \mathcal{D}^{k+\frac{1}{2}} d\br \\
& \displaystyle =  -\frac{1}{2} \int_{\Omega}  \left( \Grad \bu^{k+\frac{1}{2}}  + ( \Grad \bu^{k+\frac{1}{2}})^T \right) : \left( \Grad \bu^{k+\frac{1}{2}}  + ( \Grad \bu^{k+\frac{1}{2}})^T \right) d\br \\
&\displaystyle   \quad + \frac{1}{2} \int_{\Omega}  \left( \Grad \bu^{k+\frac{1}{2}}  + ( \Grad \bu^{k+\frac{1}{2}})^T \right) : \left(\Div \bu^{k+\frac{1}{2}} \mathbf{I} \right)d \br
\end{array}
\label{eq_appendix_cal}
\end{equation}
where we have used 
the definition of $\mathcal{D}$ in Eq. (\ref{D_half}) to obtain the second equality. 
It is easy to see that
\begin{equation}
 \int_{\Omega}  \left( \Grad \bu^{k+\frac{1}{2}}  + ( \Grad \bu^{k+\frac{1}{2}})^T \right) : \left(\Div \bu^{k+\frac{1}{2}} \mathbf{I} \right)d \br =  \int_{\Omega} \left(\Div \bu^{k+\frac{1}{2}} \mathbf{I} \right): \left(\Div \bu^{k+\frac{1}{2}} \mathbf{I} \right)d \br.
 \label{eq_appendix_cal2}
 \end{equation}
 Now adding and subtracting $\displaystyle \frac{1}{2} \int_{\Omega}  \left( \Grad \bu^{k+\frac{1}{2}}  + ( \Grad \bu^{k+\frac{1}{2}})^T \right) : \left(\Div \bu^{k+\frac{1}{2}} \mathbf{I} \right)d \br$ 
 to the right hand side of Eq. (\ref{eq_appendix_cal})  and using the expression in Eq. (\ref{eq_appendix_cal2}) we have
  \begin{equation}
 \begin{array}{rl}
\displaystyle \int_{\Omega} \bu^{k+\frac{1}{2}} \cdot \Div \mathcal{D}^{k+\frac{1}{2}} d\br,  
& \displaystyle =  -\frac{1}{2} \int_{\Omega}  \left( \Grad \bu^{k+\frac{1}{2}}  + ( \Grad \bu^{k+\frac{1}{2}})^T \right) : \left( \Grad \bu^{k+\frac{1}{2}}  + ( \Grad \bu^{k+\frac{1}{2}})^T \right) d\br \\
&\displaystyle   \quad + \int_{\Omega}  \left( \Grad \bu^{k+\frac{1}{2}}  + ( \Grad \bu^{k+\frac{1}{2}})^T \right) : \left(\Div \bu^{k+\frac{1}{2}} \mathbf{I} \right)d \br \\
&\displaystyle   \quad - \frac{1}{2}\int_{\Omega}   \left(\Div \bu^{k+\frac{1}{2}} \mathbf{I} \right): \left(\Div \bu^{k+\frac{1}{2}} \mathbf{I} \right)d \br, \\
& \displaystyle =  -\frac{1}{2} \left[ \int_{\Omega}  \left( \Grad \bu^{k+\frac{1}{2}}  + ( \Grad \bu^{k+\frac{1}{2}})^T \right) : \left( \Grad \bu^{k+\frac{1}{2}}  + ( \Grad \bu^{k+\frac{1}{2}})^T \right) d\br \right. \\
&\displaystyle   \qquad - 2 \int_{\Omega}  \left( \Grad \bu^{k+\frac{1}{2}}  + ( \Grad \bu^{k+\frac{1}{2}})^T \right) : \left(\Div \bu^{k+\frac{1}{2}} \mathbf{I} \right)d \br \\
&\displaystyle   \qquad +\left. \int_{\Omega}   \left(\Div \bu^{k+\frac{1}{2}} \mathbf{I} \right): \left(\Div \bu^{k+\frac{1}{2}} \mathbf{I} \right)d \br \right] \\
&\displaystyle =  -\frac{1}{2} \int_{\Omega} \mathcal{D}^{k+\frac{1}{2}} : \mathcal{D}^{k+\frac{1}{2}} d \br  \\
\end{array}
\label{eq_appendinx_cal3}
\end{equation}
This completes the proof.


\section{Discrete Function Spaces, Operators  and Inner Products}
 \label{appendix_norms} 

\subsection{Discrete Functions Spaces}
In this section we define the discretization of the domain $\Omega := (0,L_x) \times (0,L_y)$. 
We use a nodal discretization.
The development of the discretization is the similar to  the one used in (\cite{MPFC2011,WISE2009})
where a staggered grid representation was used.

Let $h>0$ be the grid spacing such that $L_x = m\cdot h$ and $L_y = n \cdot h$ where $m,n$ are positive integers.
Now we define the sets
\begin{equation}
\begin{array}{l}
C_m = \{  i \cdot h \mid i = 1 , \ldots , m \} \\
C_{\bar{m}} = \{ i \cdot h \mid i = 0, \ldots , m+1 \} \\
E_{m} = \{ i \cdot h \mid i = \frac{1}{2}, \ldots, m + \frac{1}{2} \}
\end{array}
\end{equation}
The set $C_m\times C_n$ partitions $\Omega$ into a uniform rectangular grid with cells of size $h\times h$. The set
$\{ C_{\bar{m}} \times C_{\bar{n}} \} \backslash \{ C_m \times C_n \}$ 
contains the ghost points outside the boundary of $\Omega $, which are mapped back into $\Omega$ in the case
of periodic boundary conditions.

Now we are ready to define the functions spaces as follows
	\begin{eqnarray}
{\mathcal C}_{m\times n} &=& \left\{\phi: C_m\times C_n \rightarrow \mathbb{R} \right\},\  {\mathcal C}_{\overline{m}\times\overline{n}} = \left\{\phi: C_{\overline{m}}\times C_{\overline{n}}\rightarrow \mathbb{R} \right\}  ,
    \\
{\mathcal C}_{\overline{m}\times n} &=& \left\{\phi: C_{\overline{m}}\times C_n \rightarrow \mathbb{R} \right\},\ {\mathcal C}_{m\times\overline{n}} = \left\{\phi: C_m\times C_{\overline{n}} \rightarrow \mathbb{R} \right\}  ,
	\\
{\mathcal E}^{\rm ew}_{m\times n} &=& \left\{u: E_m\times C_n \rightarrow\mathbb{R} \right\},\ {\mathcal E}^{\rm ns}_{m\times n} = \left\{v: C_m\times E_n \rightarrow\mathbb{R}  \right\}	 ,
	\\
{\mathcal E}^{\rm ew}_{m\times \overline{n}} &=& \left\{u: E_m\times C_{\overline{n}} \rightarrow\mathbb{R} \right\},\ {\mathcal E}^{\rm ns}_{\overline{m}\times n} = \left\{v: C_{\overline{m}}\times E_n \rightarrow\mathbb{R}  \right\}	 .
  \end{eqnarray}

The spaces $\mathcal{C}_{m\times n},\mathcal{C}_{\bar{m}\times \bar{n}},\mathcal{C}_{m\times \bar{n} }$ and 
$\mathcal{C}_{\bar{m}\times n}$ contain the grid functions.
A grid function are identified as $\phi_{ij}:=\phi(x_i,y_j)$, where $x_i=i \cdot h$ and $y_j = j \cdot h$ 
and $i,j$ are integers. The east-west edge-centered functions, in spaces ${\mathcal E}^{\rm ew}_{m\times \overline{n}}$ and
${\mathcal E}^{\rm ew}_{m\times n}$ are identified as $u_{i+\frac{1}{2},j} := u ( x_{i+\frac{1}{2}}, y_j)$.
In a similar manner the north-south edge centered functions, in spaces $\ {\mathcal E}^{\rm ns}_{m\times n}$ and
$\ {\mathcal E}^{\rm ns}_{\overline{m}\times n}$ are identified as $v_{i,j+\frac{1}{2}} := u ( x_{i},y_{j+\frac{1}{2}})$

By defining a set of difference operators and equipping the space with a set of discrete norms 
one can write down self consistent, discrete gradient and Laplacian operators.

We start by defining the relevant difference and average operators on the space.
\begin{enumerate}
\item The averaging and difference operators $A_x,D_x : \mathcal{C}_{\bar{m} \times n} \longrightarrow {\mathcal E}^{\rm ew}_{m\times n}$
and $A_y,D_y : \mathcal{C}_{m \times \bar{n}} \longrightarrow \ {\mathcal E}^{\rm ns}_{m\times n}$ are defined as
\begin{equation}
A_x f_{i+\frac{1}{2},j} = \frac{1}{2}( f_{i+1,j} + f_{i,j} ), D_x f_{i+\frac{1}{2},j} = \frac{1}{h} ( f_{i+1,j} - f_{i,j} ) ,
\begin{array}{l}
i = 0,\ldots,m \\
j=1,\ldots,n 
\end{array}
\end{equation}
\begin{equation}
A_y f_{i,j+\frac{1}{2}} = \frac{1}{2} ( f_{i,j+1} + f_{i,j} ), D_y f_{i,j+\frac{1}{2}} = \frac{1}{h} ( f_{i,j+1} - f_{i,j} ),
 \begin{array}{l}
i = 1,\ldots,m \\
j=0,\ldots,n 
\end{array}
\end{equation}
The edge to center difference operators are defined as $d_x : {\mathcal E}^{\rm ew}_{m\times n} \longrightarrow \mathcal{C}_{m \times n}$
and $d_y : \ {\mathcal E}^{\rm ns}_{m\times n} \longrightarrow \mathcal{C}_{m \times n}$  are defined as
\begin{equation}
d_x f_{i,j} = \frac{1}{h}( f_{i+\frac{1}{2},j} + f_{i-\frac{1}{2},j} ), d_y f_{i,j} = \frac{1}{h} ( f_{i,j+\frac{1}{2}} - f_{i,j-\frac{1}{2}} ) ,
\begin{array}{l}
i = 1,\ldots,m \\
j=1,\ldots,n 
\end{array}
\end{equation}

\item The discrete gradient operator $\nabla_h : \mathcal{C}_{\bar{m} \times n} \times \mathcal{C}_{m \times \bar{n}} \longrightarrow \mathcal{C}_{m \times n} \times \mathcal{C}_{m \times n}$ is given by
\begin{equation}\begin{array}{r}
\displaystyle \nabla_h \phi_{ij} = \frac{1}{2} \left( D_x \phi_{i+\frac{1}{2},j} + D_x \phi_{i-\frac{1}{2},j}, D_y \phi_{i,j+\frac{1}{2}} + D_y \phi_{i,j-\frac{1}{2}} \right)^T, \\
\begin{array}{l}
i = 1,\ldots,m \\
j=1,\ldots,n 
\end{array} 
\end{array}
\end{equation}
\item The discrete divergence operator is  defined in a similar manner as 
\begin{equation}
\begin{array}{r}
\displaystyle \nabla_h \cdot \bu_{ij} = 
 \frac{1}{2} \left( D_x u_{i+\frac{1}{2},j} + D_x u_{i-\frac{1}{2},j} + D_y v_{i,j+\frac{1}{2}} + D_y v_{i,j-\frac{1}{2}} \right), \\
\begin{array}{l}
i = 1,\ldots,m \\
j=1,\ldots,n 
\end{array} 
\end{array}
\end{equation}
where we have used the definition $\bu_{ij} := (u_{ij}, v_{ij})^T$.
\item The discrete 2D Laplacian $ \Delta_h : \mathcal{C}_{\bar{m} \times \bar{n}} \longrightarrow \mathcal{C}_{m\times n}$ is defined as
\begin{equation}
\begin{array}{lll} 
\displaystyle \Delta_h \phi_{ij} & \displaystyle = d_x(D_x \phi)_{ij}  + d_y(D_y \phi)_{ij} \\
 & \displaystyle=  \frac{1}{h^2} ( \phi_{i+1,j} +\phi_{i,j+1}+ \phi_{i-1,j}+ \phi_{i-1,j} - 4 \phi_{i,j} ) &
\begin{array}{l}
i = 1,\ldots,m \\
j=1,\ldots,n 
\end{array} 
\end{array}
\end{equation}
\end{enumerate}
We define the following weighted inner products on the function spaces
\begin{equation}
(\phi \parallel \psi )  = h^2 \sum\limits_{i=1}^{m} \sum\limits_{j=1}^{n} \phi_{i,j}\psi_{i,j} , \quad
 \phi,\psi \in \mathcal{C}_{m\times n} \cup \mathcal{C}_{\bar{m}\times \bar{n}} \cup \mathcal{C}_{m\times \bar{n} }  \cup \mathcal{C}_{\bar{m}\times n} 
 \end{equation}
with $\bu = ( u_1,u_2)^T$ and $\bv = (v_1,v_2)^T$, the inner products 
 \begin{equation}
(\bu \parallel \bv )  = (u_1 \parallel v_1 ) + (u_2 \parallel v_2 )  , \quad
 u_1,u_2,v_1,v_2 \in \mathcal{C}_{m\times n} \cup \mathcal{C}_{\bar{m}\times \bar{n}} \cup \mathcal{C}_{m\times \bar{n} }  \cup \mathcal{C}_{\bar{m}\times n} 
 \end{equation}
 \begin{equation}
 \eipew{f}{g} = h^2 \sum\limits_{i=1}^{m} \sum\limits_{j=1}^{n} \left( f_{i+\frac{1}{2},j}  g_{i+\frac{1}{2},j} + f_{i-\frac{1}{2},j}  g_{i-\frac{1}{2},j} \right), \quad  f,g \in  \mathcal{E}_{m\times n}^{\rm ns}
 \end{equation}
 \begin{equation}
 \eipns{f}{g} = h^2 \sum\limits_{i=1}^{m} \sum\limits_{j=1}^{n} \left( f_{i,j+\frac{1}{2}}  g_{i,j+\frac{1}{2}} + f_{i,j-\frac{1}{2}}  g_{i,j-\frac{1}{2}} \right), \quad f,g \in \mathcal{E}_{m\times n}^{\rm ew}
 \end{equation}
and the one dimensional inner products are defined as
\begin{equation}
\begin{array}{l}
(f_{\star,j+\hf} \mid g_{\star,j+\hf})= h \sum\limits_{i=1}^n f_{i,j+\hf} g_{i,j+\hf} , \quad f,g \in  \mathcal{E}_{m\times n}^{\rm ns} \\
(f_{i+\hf,\star} \mid g_{i+\hf,\star})= h \sum\limits_{i=1}^m f_{i+\hf,j} g_{i+\hf,j} ,\quad f,g \in \mathcal{E}_{m\times n}^{\rm ew}\\
(\phi_{\star,j} \mid \psi_{\star,j})= h \sum\limits_{i=1}^n \phi_{i,j} \psi_{i,j} , \quad \phi,\psi \in  \mathcal{C}_{m\times n} \\
(\phi_{i,\star} \mid \psi_{i,\star})= h \sum\limits_{i=1}^m \phi_{i+\hf,j} \psi_{i+\hf,j} ,\quad \psi,\psi \in \mathcal{C}_{m\times n}
\end{array}
\end{equation}


Using the definitions given in this Appendix and in~\cite{WISE2009}, we obtain the following summation-by-parts formulae:
	\begin{prop}
	\label{sbp-2D-edge}
{\em (summation-by-parts)} If $\phi\in{\mathcal C}_{\overline{m}\times n} \cup{\mathcal C}_{\overline{m}\times\overline{n}}$ and $f\in{\mathcal E}_{m\times n}^{\rm ew}$ then
	\begin{eqnarray}
   \eipew{D_x \phi}{f} &=&  -\ciptwo{\phi}{d_x f}
	\nonumber
	\\
&& - \cip{A_x\phi_{\hf,\star}}{f_{\hf,\star}}+ \cip{A_x\phi_{m+\hf,\star}}{f_{m+\hf,\star}} \ ,
	\label{sbp-c-ew}
	\end{eqnarray}
and if $\phi\in{\mathcal C}_{m\times\overline{n}} \cup {\mathcal C}_{\overline{m}\times\overline{n}}$ and $f\in{\mathcal E}_{m\times n}^{\rm ns}$ then
	\begin{eqnarray}
  \eipns{D_y\phi}{f} &=&  - \ciptwo{\phi}{d_y f} 
	\nonumber
	\\
&&  -\cip{A_y\phi_{\star,\hf}}{f_{\star,\hf}}+ \cip{A_y\phi_{\star,n+\hf}}{f_{\star,n+\hf}} \ .
	\label{sbp-c-ns}
	\end{eqnarray}
	\end{prop}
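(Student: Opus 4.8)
The plan is to prove the identity by a direct discrete summation-by-parts (Abel summation) computation, exploiting the fact that every operator appearing in Eq.~(\ref{sbp-c-ew}) --- namely $D_x$, $d_x$, and $A_x$ --- acts only on the $i$-index. First I would note that it suffices to treat the first identity, since the second follows verbatim after interchanging the roles of $x$ and $y$ (and of $m$ and $n$, and of the east--west and north--south edge spaces). Next, because the difference operators each carry a factor $1/h$, both sides of Eq.~(\ref{sbp-c-ew}) reduce, once the common prefactor $h\sum_{j=1}^n(\cdots)$ is stripped off, to a one-dimensional statement at each fixed $j$. Thus the whole proposition collapses to a single one-dimensional summation-by-parts identity in the index $i$, which I would verify by collecting, on each side, the coefficient of every edge value $f_{i+\hf}$.

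The core step is then elementary. Substituting $(D_x\phi)_{i+\hf} = h^{-1}(\phi_{i+1}-\phi_i)$ into $\eipew{D_x\phi}{f}$ and expanding produces a telescoping sum of terms $(\phi_{i+1}-\phi_i)f_{i+\hf}$ together with $(\phi_i-\phi_{i-1})f_{i-\hf}$. I would reorganize this sum by grouping the coefficient of each fixed edge value. The interior edges reassemble into the combination $\phi_i\,(f_{i+\hf}-f_{i-\hf}) = h\,\phi_i\,(d_x f)_i$, so that, contracted against $\phi$, they reproduce the bulk term $-\ciptwo{\phi}{d_x f}$ on the right-hand side (modulo the overall normalization of the edge inner product). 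What then remains are only the contributions localized at the two boundary edges $i=\hf$ and $i=m+\hf$.

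The one genuinely delicate point --- and the step I expect to be the main obstacle --- is the correct bookkeeping of these boundary contributions, which is where the averaging operator $A_x$ enters. Because the edge inner product $\eipew{\cdot}{\cdot}$ weights each interior edge twice but each of the two boundary edges only once, the telescoping leftover at $i=\hf$ involves both $\phi_1$ and the ghost value $\phi_0$, combining precisely into $A_x\phi_{\hf}=\hf(\phi_0+\phi_1)$; likewise the leftover at $i=m+\hf$ assembles into $A_x\phi_{m+\hf}=\hf(\phi_m+\phi_{m+1})$. Keeping this asymmetric edge-weighting straight, and confirming that the surviving one-sided terms genuinely close up into the symmetric averages rather than into bare nodal values, is the crux of the calculation; it is also why $\phi$ must be taken in a ghost-padded space such as $\mathcal{C}_{\overline{m}\times n}$. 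Assembling the bulk and boundary pieces and multiplying back by the stripped factor $h\sum_j(\cdots)$ yields Eq.~(\ref{sbp-c-ew}), and the identical argument in the $y$-direction gives Eq.~(\ref{sbp-c-ns}).
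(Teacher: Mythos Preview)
Your proposal is correct and follows the standard direct-computation approach; the paper itself does not provide an explicit proof of this proposition but simply states it after referring to \cite{WISE2009}, where precisely this telescoping (Abel-summation) argument --- reduction to a one-dimensional index, reorganization of the difference sums, and identification of the boundary remainders as the averages $A_x\phi_{\hf}$ and $A_x\phi_{m+\hf}$ --- is carried out. Your identification of the asymmetric edge weighting in $\eipew{\cdot}{\cdot}$ as the key bookkeeping point is exactly right and is what forces the boundary terms to appear with the averaging operator rather than bare nodal values.
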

	
	\begin{prop}
	\label{divergence_thm}
{\em (divergence theorem)}Let $\phi\in {\mathcal C}_{\overline{m}\times\overline{n}}$ and $\bu =(u,v)^T \in {\mathcal C}_{\overline{m}\times\overline{n}} \times {\mathcal C}_{\overline{m}\times\overline{n}}$    .  Then
	\begin{eqnarray}
	  (\nabla_h \phi  \parallel \bu ) &=& 
  -( \phi \parallel \Divh \bu )
	\nonumber
	\\
 &&- \hf( \phi_{0,\star} \mid u_{1, \star } ) + \hf( \phi_{m,\star} \mid u_{m+1, \star } ) - \hf( \phi_{1,\star} \mid u_{0, \star } ) +  \hf( \phi_{m+1,\star} \mid u_{m, \star } )    \nonumber
        \\
 &&- \hf( \phi_{\star,0} \mid v_{ \star,1 } ) + \hf( \phi_{\star,n} \mid v_{ \star,n+1 } ) - \hf( \phi_{\star,1} \mid v_{ \star,0 } ) +  \hf( \phi_{\star,n+1} \mid v_{ \star,n } )   . \nonumber
	\\
	\end{eqnarray}

	\end{prop}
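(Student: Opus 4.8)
The plan is to establish the identity by direct discrete summation by parts in each coordinate direction. First I would split the left-hand side along the two components of the gradient, $(\Gradh \phi \parallel \bu) = (\partial^h_x \phi \parallel u) + (\partial^h_y \phi \parallel v)$, and likewise split $\Divh \bu$ into its $x$- and $y$-parts. By the definitions of $\Gradh$ and $\Divh$ the relevant central differences are $\partial^h_x \phi_{ij} = \hf(D_x\phi_{i+\hf,j} + D_x\phi_{i-\hf,j}) = \frac{1}{2h}(\phi_{i+1,j}-\phi_{i-1,j})$ and $(\Divh\bu)_x = \frac{1}{2h}(u_{i+1,j}-u_{i-1,j})$, with the $y$-expressions obtained by the obvious replacements. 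Since the $x$- and $y$-contributions are structurally identical (with $m$ and the slot $\star$ in the second coordinate traded for $n$ and the slot $\star$ in the first), it suffices to carry out the $x$-direction in full and invoke this symmetry for the $y$-direction.

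For the $x$-direction the key step is to combine the gradient term with the $x$-part of the divergence term so that a discrete flux telescopes. Writing $S_x := (\partial^h_x\phi \parallel u)$, one has
\begin{equation}
S_x + \big(\phi \parallel (\Divh\bu)_x\big) = \frac{h}{2}\sum_{j=1}^{n}\sum_{i=1}^{m}\Big[(\phi_{i+1,j}u_{ij}+\phi_{ij}u_{i+1,j}) - (\phi_{ij}u_{i-1,j}+\phi_{i-1,j}u_{ij})\Big].
\end{equation}
The crucial observation is that, setting $g_{i+\hf,j} := \phi_{i+1,j}u_{ij}+\phi_{ij}u_{i+1,j}$, the bracketed summand is exactly $g_{i+\hf,j}-g_{i-\hf,j}$, so the inner sum over $i$ telescopes to $g_{m+\hf,j}-g_{\hf,j}$.

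Carrying out the telescoping leaves only the boundary fluxes
\begin{equation}
\frac{h}{2}\sum_{j=1}^{n}\big[\phi_{m+1,j}u_{m,j}+\phi_{m,j}u_{m+1,j}-\phi_{1,j}u_{0,j}-\phi_{0,j}u_{1,j}\big],
\end{equation}
and rewriting each $j$-sum as the one-dimensional inner product $(\,\cdot_{\star}\mid\cdot_{\star}\,)$ reproduces precisely the four $x$-boundary terms in the statement. The $y$-direction yields the remaining four terms by the same computation, and moving $(\phi\parallel\Divh\bu)$ back to the right-hand side gives the claimed formula.

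I expect the only genuine difficulty to be the bookkeeping of the ghost-layer indices: the telescoping produces fluxes evaluated at $i=0,1,m,m+1$ — which is exactly why $\phi$ and $\bu$ must be prescribed on the extended grid $\mathcal{C}_{\overline{m}\times\overline{n}}$ — and one must track the sign and index slot of each of the eight surviving boundary terms to match them against the statement. No interior or boundary cancellation occurs in general, so correctly assigning these contributions is the crux; the remainder is routine index shifting. One could alternatively assemble the result from the edge summation-by-parts formula of Proposition~\ref{sbp-2D-edge}, but since $\bu$ is cell-centered rather than edge-centered here, the direct telescoping above is the cleaner route.
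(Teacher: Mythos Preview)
Your proof is correct. The paper does not actually supply a proof of this proposition; it is stated in the appendix as one of several summation-by-parts formulae that ``follow from the definitions'' (with a reference to \cite{WISE2009}), so there is nothing to compare against beyond your own computation. Your direct telescoping argument, combining $(\partial_x^h\phi\parallel u)$ with $(\phi\parallel(\Divh\bu)_x)$ so that the flux $g_{i+\hf,j}=\phi_{i+1,j}u_{ij}+\phi_{ij}u_{i+1,j}$ telescopes, is exactly the natural way to establish the identity and correctly produces all eight boundary terms. Your remark that Proposition~\ref{sbp-2D-edge} is not directly applicable because $\bu$ is cell-centered rather than edge-centered is also on point.
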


	\begin{prop}
	\label{green1stthm-2d}
\emph{(discrete Green's first identity)} Let $\phi,\, \psi\in {\mathcal C}_{\overline{m}\times\overline{n}}$.  Then
	\begin{eqnarray}
   \eipew{D_x\phi}{D_x\psi} + \eipns{D_y\phi}{D_y\psi}
&=& - \ciptwo{\phi}{\Dh\psi}
	\nonumber
	\\
& & - \, \cip{A_x\phi_{\hf,\star}}{D_x\psi_{  \hf,\star}} +  \cip{A_x\phi_{m+\hf,\star}}{D_x \psi_{m+\hf,\star}}
	\nonumber
	\\
& & -  \cip{A_y\phi_{\star,  \hf}}{D_y\psi_{\star,  \hf}} +  \cip{A_y \phi_{\star,n+\hf}}{D_y\psi_{\star,n+\hf}} \ . 
	\nonumber
	\\
& &
	\end{eqnarray}
	\end{prop}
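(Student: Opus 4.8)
The plan is to reduce this two-dimensional identity to the one-dimensional summation-by-parts formulae of Proposition \ref{sbp-2D-edge}, applied once in each coordinate direction. The key observation is that the edge-centered fluxes appearing on the left-hand side are themselves discrete derivatives: since $\psi \in \mathcal{C}_{\overline{m}\times\overline{n}}$, the grid function $D_x\psi$ lies in $\mathcal{E}^{\rm ew}_{m\times n}$ and $D_y\psi$ lies in $\mathcal{E}^{\rm ns}_{m\times n}$. I would therefore apply formula (\ref{sbp-c-ew}) with the generic flux $f$ specialized to $f = D_x\psi$, and formula (\ref{sbp-c-ns}) with $f = D_y\psi$, using $\phi \in \mathcal{C}_{\overline{m}\times\overline{n}}$ as the scalar field in both.

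First I would write out the $x$-contribution. Substituting $f = D_x\psi$ into (\ref{sbp-c-ew}) gives
\begin{equation}
\eipew{D_x\phi}{D_x\psi} = -\ciptwo{\phi}{d_x D_x\psi} - \cip{A_x\phi_{\hf,\star}}{D_x\psi_{\hf,\star}} + \cip{A_x\phi_{m+\hf,\star}}{D_x\psi_{m+\hf,\star}},
\end{equation}
and the analogous substitution $f = D_y\psi$ into (\ref{sbp-c-ns}) yields
\begin{equation}
\eipns{D_y\phi}{D_y\psi} = -\ciptwo{\phi}{d_y D_y\psi} - \cip{A_y\phi_{\star,\hf}}{D_y\psi_{\star,\hf}} + \cip{A_y\phi_{\star,n+\hf}}{D_y\psi_{\star,n+\hf}}.
\end{equation}

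Adding these two relations, the interior terms combine as $-\ciptwo{\phi}{d_x D_x\psi + d_y D_y\psi}$, and by the very definition of the discrete Laplacian, $\Dh\psi = d_x(D_x\psi) + d_y(D_y\psi)$, so this is exactly $-\ciptwo{\phi}{\Dh\psi}$. The four remaining boundary inner products are precisely the four terms displayed on the right-hand side of the claim, so the identity follows at once.

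Since the argument is a direct specialization of Proposition \ref{sbp-2D-edge}, there is no genuine analytic obstacle; the only point requiring care is the bookkeeping. One must verify the function-space memberships so that each invocation of summation-by-parts is legitimate (in particular that $\phi$, viewed as an element of $\mathcal{C}_{\overline{m}\times n}$ and of $\mathcal{C}_{m\times\overline{n}}$ respectively, supplies the boundary averages $A_x\phi$ and $A_y\phi$ at the correct half-integer stations), and confirm that the boundary contributions assemble, without any cross-cancellation, into exactly the stated east/west and north/south edge terms.
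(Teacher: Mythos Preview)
Your proposal is correct and is precisely the intended derivation: the paper does not spell out a proof of this proposition, but by placing the summation-by-parts formulae of Proposition~\ref{sbp-2D-edge} immediately before it and citing~\cite{WISE2009}, it signals exactly the argument you give --- specialize $f = D_x\psi$ in (\ref{sbp-c-ew}) and $f = D_y\psi$ in (\ref{sbp-c-ns}), add, and invoke $\Dh\psi = d_x(D_x\psi) + d_y(D_y\psi)$. Your bookkeeping remarks about function-space memberships are appropriate and there is nothing further to add.
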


	\begin{prop}\label{green2ndthm-2d}
	\emph{(discrete Green's second identity)} Let $\phi,\, \psi\in {\mathcal C}_{\overline{m}\times\overline{n}}$.  Then
	\begin{eqnarray}
\, \ciptwo{\phi}{\Dh\psi} &=&  \ciptwo{\Dh\phi}{\psi}
	\nonumber
	\\
& & + \, \cip{A_x \phi_{m+\hf,\star}}{D_x \psi_{m+\hf,\star}} - \, \cip{D_x \phi_{m+\hf,\star}}{A_x \psi_{m+\hf,\star}}
	\nonumber
	\\
& & -  \cip{A_x \phi_{  \hf,\star}}{D_x \psi_{  \hf,\star}} +  \cip{D_x \phi_{  \hf,\star}}{A_x \psi_{  \hf,\star}}
	\nonumber
	\\
& & +  \cip{A_y \phi_{\star,n+\hf}}{D_y \psi_{\star,n+\hf}} -  \cip{D_y \phi_{\star,n+\hf}}{A_y \psi_{\star,n+\hf}}
	\nonumber
	\\
	& & -  \cip{A_y \phi_{\star,  \hf}}{D_y \psi_{\star,  \hf}} +  \cip{D_y \phi_{\star,  \hf}}{A_y \psi_{\star,  \hf}} \ .
	\end{eqnarray} 
	\end{prop}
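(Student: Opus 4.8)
The plan is to obtain the second identity from the already-established discrete Green's first identity (Proposition~\ref{green1stthm-2d}) by the classical device of writing that identity twice with the two arguments interchanged and subtracting; no fresh summation-by-parts computation is needed. First I would apply Proposition~\ref{green1stthm-2d} to the ordered pair $(\phi,\psi)$, which gives
\[
\eipew{D_x\phi}{D_x\psi} + \eipns{D_y\phi}{D_y\psi} = -\ciptwo{\phi}{\Dh\psi} + B(\phi,\psi),
\]
where $B(\phi,\psi)$ abbreviates the four edge contributions $-\cip{A_x\phi_{\hf,\star}}{D_x\psi_{\hf,\star}} + \cip{A_x\phi_{m+\hf,\star}}{D_x\psi_{m+\hf,\star}} - \cip{A_y\phi_{\star,\hf}}{D_y\psi_{\star,\hf}} + \cip{A_y\phi_{\star,n+\hf}}{D_y\psi_{\star,n+\hf}}$ appearing on the right of that proposition. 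Since both $\phi$ and $\psi$ lie in ${\mathcal C}_{\overline{m}\times\overline{n}}$, the hypotheses are equally satisfied by the swapped pair, so I would next apply the same proposition to $(\psi,\phi)$ to obtain the companion identity with $\phi$ and $\psi$ exchanged throughout.

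The key observation is then the symmetry of the two edge inner products: because $\eipew{\cdot}{\cdot}$ and $\eipns{\cdot}{\cdot}$ are symmetric bilinear forms, the left-hand sides of the two identities coincide, $\eipew{D_x\phi}{D_x\psi}+\eipns{D_y\phi}{D_y\psi}=\eipew{D_x\psi}{D_x\phi}+\eipns{D_y\psi}{D_y\phi}$. Subtracting the $(\psi,\phi)$ identity from the $(\phi,\psi)$ identity therefore cancels the left-hand sides entirely and leaves
\[
0 = -\ciptwo{\phi}{\Dh\psi} + \ciptwo{\psi}{\Dh\phi} + B(\phi,\psi) - B(\psi,\phi).
\]

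To finish, I would rearrange this equation to solve for $\ciptwo{\phi}{\Dh\psi}$, use the symmetry of the two-dimensional inner product to write $\ciptwo{\psi}{\Dh\phi}=\ciptwo{\Dh\phi}{\psi}$, and use the symmetry of the one-dimensional inner products (e.g.\ $\cip{A_x\psi_{m+\hf,\star}}{D_x\phi_{m+\hf,\star}}=\cip{D_x\phi_{m+\hf,\star}}{A_x\psi_{m+\hf,\star}}$, and likewise at the $\hf$ edge and for the $y$-terms) to recast the eight surviving boundary terms of $B(\phi,\psi)-B(\psi,\phi)$ into exactly the form displayed in the statement. The only genuine work is this bookkeeping of the eight edge terms: keeping the signs and the orientations (which function carries the average $A$ and which carries the difference $D$ at each of the four boundary faces) straight so that the differenced terms line up with the claimed identity. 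There is no analytical obstacle, since periodicity is not invoked and the interior Laplacian contributions are handled entirely by the symmetry cancellation above.
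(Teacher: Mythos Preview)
Your approach is correct and is exactly the standard derivation: the paper itself does not supply a proof of this proposition but simply records it (citing \cite{WISE2009}) alongside the first identity, and the classical device you use---writing Proposition~\ref{green1stthm-2d} for $(\phi,\psi)$ and for $(\psi,\phi)$, exploiting the symmetry of $\eipew{\cdot}{\cdot}$ and $\eipns{\cdot}{\cdot}$ to cancel the left-hand sides, and then rearranging---is precisely how the second identity is obtained from the first. Your bookkeeping of $B(\phi,\psi)-B(\psi,\phi)$ reproduces the eight boundary terms with the correct signs and the correct $A$/$D$ placement, so nothing further is required.
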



\subsection{Boundary Conditions} \label{BC}
In this paper we work with periodic functions.
We will consider functions $\phi \in \mathcal{C}_{\bar{n} \times \bar{m}}$ which have the form
\begin{equation}
\begin{array}{lll}
\phi_{m+1,j} = \phi_{1,j}, & \phi_{0,j} = \phi_{m,j}, & j = 1,\ldots,n , \\
\phi_{i,n+1} = \phi_{i,1}, & \phi_{i,0} =\phi_{i,n}, & i = 0,\ldots,m. 
\end{array}
\end{equation}
For such functions the center to edge averages and differences are also periodic.
Further at this point we note that the results proven for periodic functions in the following sections will
also hold with slight modifications for homogeneous Neumann boundary conditions:
\begin{equation}
\begin{array}{lll}
\phi_{m+1,j} = \phi_{m,j}, & \phi_{0,j} = \phi_{1,j}, & j = 1,\ldots,n , \\
\phi_{i,n+1} = \phi_{i,n}, & \phi_{i,0} =\phi_{i,1}, & i = 0,\ldots,m, 
\end{array}
\end{equation}
and homogeneous Dirichlet (no-slip) boundary conditions for velocity, e.g. $\mathbf{u}=\left(u_1,u_2\right)=(0,0)$:
\begin{equation}
\begin{array}{lll}
u_{k,m+1,j} = -u_{k,m,j}, & u_{k,0,j} = -u_{k,1,j}, & j = 1,\ldots,n; ~ k=1,~2\\
u_{k,i,n+1} = -u_{k,i,n}, & u_{k,i,0} =-u_{k,i,1}, & i = 0,\ldots,m; ~ k=1,~2
\end{array}
\end{equation}

In the periodic, Neumann boundary and no-slip condition cases, the boundary terms from the integration by parts vanish.


\subsection{Discrete Norms and Their Properties}
We now define norms for the cell centered periodic functions.
If $\phi \in \mathcal{C}_{m\times n} $ and periodic then
\begin{eqnarray}
\vectornorm{\phi}_2 : = \sqrt{  (\phi \parallel \phi ) },\\
\vectornorm{\phi}_4: =\sqrt{  (\phi^4 \parallel {\bf 1}) }.
\end{eqnarray}




\section{Proof of Theorem \ref{thm_energy_stability}}\label{proof_thm_energy_stability}
In this appendix we present the details of the proof of Theorem  \ref{thm_energy_stability}.
\begin{proof}
Summing Eq. (\ref{discrete_a})  over all cell centers and using the discrete Divergence theorem, we immediately have
mass conservation $\left( \rho^{k+1}\parallel 1\right)= \left( \rho^{k}\parallel 1\right)$.
Choosing $\phi =\rho^{k+1}$ and $\psi = \rho^{k}$ in Eq. (\ref{ineq2c}) we have
the following estimate on the free energy
\begin{equation}
\mathcal{F}[\rho^{k+1}] - \mathcal{F}[\rho^k] \leq ( \delta_{\rho} \mathcal{F}_c[\rho^{k+1}] - \delta_{\rho} \mathcal{F}_e[\rho^k],\rho^{k+1} - \rho^k).
\label{ineq2b}
\end{equation}
This estimate immediately translates to an estimate for the discrete total energy as follows :
 \begin{equation}
 \begin{array}{rl}
\displaystyle \mathcal{E} [ \rho^{k+1},\bu^{k+1}] - \mathcal{E}[\rho^k,\bu^k] & \displaystyle = \frac{1}{2}(\rho^{k+1} \bu^{k+1}\parallel \bu^{k+1} ) - \frac{1}{2}\left( \rho^{k} \bu^{k} \parallel \bu^{k} \right)  \\
& \displaystyle  \quad + \mathcal{F}[\rho^{k+1}] - \mathcal{F}[\rho^k]  \\
& \displaystyle \leq \frac{1}{2}(\rho^{k+1} \bu^{k+1} \parallel \bu^{k+1} ) - \frac{1}{2}\left( \rho^{k} \bu^{k} \parallel \bu^{k} \right)  \\
& \displaystyle  \quad +  (\rho^{k+1} - \rho^k \parallel  \mu^{k+1} ), \\
\end{array}
\label{total_estimate2}
\end{equation}
where we have used Eq.(\ref{ineq1_use}) in in step 2 along with the definition of $\mu^{k+1}$ (Eq. \ref{semisicrete_1_c}).

Multiplying Eq. (\ref{discrete_b}) by $\bu^{k+\frac{1}{2}}$ and summing over cell centers we have
 \begin{equation}
 \begin{array}{rl}
\displaystyle \frac{1}{2} (\rho^{k+1} \bu^{k+1} \parallel \bu^{k+1} )  - \frac{1}{2}( \rho^{k} \bu^k \parallel \bu^{k} )  
& \displaystyle -\frac{1}{2} ( \left(\rho^{k+1} - \rho^k \right) \parallel \mid \bu^{k+1}\mid^2 )  \\
  = & \displaystyle - s \frac{1}{2} (\rho^k \bu^{k+\frac{1}{2}} \parallel \Gradh \mid \bu^{k+1} \mid^2 )\\
& \displaystyle  - s ( \rho^k \bu^{k+\frac{1}{2}} \parallel \Gradh \mu^{k+1} ) \\
& \displaystyle + s \gamma 
 ( \bu^{k+\frac{1}{2}} \parallel \Delta_h \bu^{k+\frac{1}{2}} ) , 
\end{array}
\end{equation}
Now simplifying this in a manner analogous to the proof of 
Theorem \ref{discrete_time_thm} we have
 \begin{equation}
 \begin{array}{rl}
\displaystyle  \frac{1}{2}(\rho^{k+1} \bu^{k+1} \parallel \bu^{k+1} ) - \frac{1}{2}\left( \rho^{k} \bu^{k}\parallel \bu^{k} \right)  &  \\ 
\displaystyle  +  (\rho^{k+1} - \rho^k,  \mu^{k+1} ) & \displaystyle = + s \gamma 
 ( \bu^{k+\frac{1}{2}} \parallel \Delta_h \bu^{k+\frac{1}{2}} ) \\
& \displaystyle =- s \gamma \left( \eipew{D_x u}{D_x u} + \eipns{D_y v}{D_y v} \right),
\end{array}
\label{parts_eq2}
\end{equation} 
where $u,v$ are components of $\bu$. Here we have used the discrete Green's identity (see Proposition \ref{green1stthm-2d}) in the last step. 
Finally using the relation Eq. (\ref{parts_eq2}) and Eq. (\ref{total_estimate2}) we have
 \begin{equation}
 \begin{array}{rl}
 \displaystyle \mathcal{E} [ \rho^{k+1},\bu^{k+1}] - \mathcal{E}[\rho^k,\bu^k] & \displaystyle \leq \frac{1}{2}(\rho^{k+1} \bu^{k+1} \parallel \bu^{k+1} )\\
 &\displaystyle \quad - \frac{1}{2}\left( \rho^{k} \bu^{k} \parallel \bu^{k} \right)  +  (\rho^{k+1} - \rho^k \parallel \mu^{k+1} ) \\
 &\displaystyle = - s \gamma \left( \eipew{D_x u}{D_x u} + \eipns{D_y v}{D_y v} \right) \\
& \displaystyle \leq0. 
\end{array}
\end{equation}

Hence we have shown that the total energy of the system is non-increasing, regardless of the time step $s>0$ i.e $\mathcal{E}[\rho^{k+1},\bu^{k+1}] \leq \mathcal{E}[\rho^k,\bu^k]$.
\end{proof}




\section{Nonlinear Multigrid Solvers}
In this Appendix we present the details of the nonlinear multigrid algorithm used for time stepping the implicit numerical scheme presented in the 
previous sections.

\subsection{Multigrid method for the convex splitting scheme for the CDFT model}
\label{appendix_MGDFT}
In order to time step the implicit numerical scheme presented in Eqs. (\ref{discrete_a}) - (\ref{discrete_d}) with the CDFT free energy
Eq. (\ref{CS_DFT_discrete}), Eq. (\ref{CS_DFT_discrete1}) and Eq. (\ref{CS_DFT_discrete2}) we need to solve for
$\rho^{k+1},u^{k+1}, v^{k+1} \in \mathcal{C}_{\bar{m}\times \bar{n}}$ such that $\bu^{k+1} = (u^{k+1},v^{k+1})^T$ given $\rho^{k},u^{k}, v^{k} \in \mathcal{C}_{\bar{m}\times \bar{n}}$ such that $\bu^{k} = (u^{k},v^{k})^T$,
that satisfy the following equations 
\begin{equation}
\displaystyle \rho^{k+1}_{ij} - \rho^k_{ij} = -\frac{s}{2h}\left( \rho^k_{i+1,j} u^{k+\frac{1}{2}}_{i+1,j} - \rho^k_{i-1,j} u^{k+\frac{1}{2}}_{i-1,j} +\rho^k_{i,j+1} v^{k+\frac{1}{2}}_{i,j+1} - \rho^k_{i,j-1} v^{k+\frac{1}{2}}_{i,j-1}  \right), \label{MG_DFT_a} 
\end{equation}

\begin{equation}
\begin{array}{rl}
\displaystyle \rho^k_{ij} ( u^{k+1}_{ij} - u^{k}_{ij} ) = 
&\displaystyle  s \left[ -\rho^k_{ij} \omega_{ij}^k v^{k+\frac{1}{2}}_{ij}  \right.  \\
&\displaystyle \quad - \frac{\rho^k}{4h} \left( \mid u^{k+1}_{i+1,j} \mid^2 -\mid u^{k+1}_{i-1,j} \mid^2 \right) \\
&\displaystyle \quad- \frac{\rho^k}{4h} \left( \mid v^{k+1}_{i+1,j} \mid^2 -\mid v^{k+1}_{i-1,j} \mid^2 \right) \\
& \displaystyle  \quad - \frac{\rho^k_{ij}}{2h}  \left( \ln(\rho^{k+1}_{i+1,j}) - \ln (\rho^{k+1}_{i-1,j})  \right)\\
&\displaystyle \quad - \frac{2\rho^k_{ij}(J_e*1)}{2h}\left( \rho^{k+1}_{i+1,j} - \rho^{k+1}_{i-1,j} \right) \\
&\displaystyle \quad 
+ \frac{\rho^k_{ij}}{2h}\left( J*\rho^{k}_{i+1,j} - J*\rho^{k}_{i-1,j} \right) \\
&\displaystyle \quad \left.
 + \frac{2\rho^k_{ij}(J_e*1)}{2h}\left( \rho^{k}_{i+1,j} - \rho^{k}_{i-1,j} \right)  + \gamma \Delta_h u^{k+\frac{1}{2}}_{ij} \right] \label{MG_DFT_b} \\
\end{array}
\end{equation}
and

\begin{equation}
\begin{array}{rl}
\displaystyle \rho^k_{ij} ( v^{k+1}_{ij} - v^{k}_{ij} ) = 
&\displaystyle  s \left[ \rho^k_{ij} \omega_{ij}^k u^{k+\frac{1}{2}}_{ij} \right.  \\ 
&\displaystyle \quad - \frac{\rho^k}{4h} \left( \mid u^{k+1}_{i,j+1} \mid^2 -\mid u^{k+1}_{i,j-1} \mid^2 \right) \\
 &\displaystyle \quad- \frac{\rho^k}{4h} \left( \mid v^{k+1}_{i,j+1} \mid^2 -\mid v^{k+1}_{i,j-1} \mid^2 \right) \\
& \displaystyle  \quad - \frac{\rho^k_{ij}}{2h}  \left( \ln(\rho^{k+1}_{i,j+1}) - \ln(\rho^{k+1}_{i,j-1} ) \right)\\
&\displaystyle \quad - \frac{2\rho^k_{ij}(J_e*1)}{2h}\left( \rho^{k+1}_{i,j+1} - \rho^{k+1}_{i,j-1} \right) \\
&\displaystyle \quad 
+ \frac{\rho^k_{ij}}{2h}\left( J*\rho^{k}_{i,j+1} - J*\rho^{k}_{i,j-1} \right) \\
&\displaystyle \qquad \left.
 + \frac{2\rho^k_{ij}(J_e*1)}{2h}\left( \rho^{k}_{i,j+1} - \rho^{k}_{i,j-1} \right)  + \gamma \Delta_h v^{k+\frac{1}{2}}_{ij} \right], \label{MG_DFT_c} \\
\end{array}
\end{equation}

where 

\begin{equation}
\omega^k_{ij} = \frac{1}{2h}\left( v^k_{i+1,j} - v^k_{i-1,j} - u^k_{i,j+1} + u^k_{i,j-1} \right)\label{MG_DFT_d}.
\end{equation}

The above non-linear problem can be written in terms of a non-linear operator $\mathbf{N}$ and source $\mathbf{S}$ such that
$\mathbf{N}(\bg) = \mathbf{S}$. 
Let $\bg = ( \rho, u,v )^T$ then the $3\times m\times n$ nonlinear operator $\mathbf{N}(\bg) = ( N^{(1)} (\bg), N^{(2)} (\bg), N^{(3)} (\bg) )^T$ can be defined
as
\begin{equation}
{\bf N}_{ij}^{(1)} ( \bg) := \rho_{ij}  + \frac{s}{4h}\left( \rho^k_{i+1,j} u_{i+1,j} - \rho^k_{i-1,j} u_{i-1,j} +\rho^k_{i,j+1} v_{i,j+1} - \rho^k_{i,j-1} v_{i,j-1}  \right), \label{MGN_DFT_a} 
\end{equation}

\begin{equation}
\begin{array}{rl}
{\bf N}_{ij}^{(2)} ( \bg) := \rho^k_{ij} u_{ij} 
&\displaystyle  - s \left[ - \frac{1}{2} \rho^k_{ij} \omega_{ij}^k v_{ij}  \right.  \\ 
&\displaystyle \quad - \frac{\rho^k}{4h} \left( \mid u_{i+1,j} \mid^2 -\mid u_{i-1,j} \mid^2 \right)\\
&\displaystyle \quad- \frac{\rho^k}{4h} \left( \mid v_{i+1,j} \mid^2 -\mid v_{i-1,j} \mid^2 \right) \\
& \displaystyle  \quad - \frac{\rho^k_{ij}}{2h}  \left( \ln(\rho_{i+1,j}) - \ln (\rho_{i-1,j})  \right)\\
&\displaystyle \quad- \frac{2\rho^k_{ij}(J_e*1)}{2h}\left( \rho_{i+1,j} - \rho_{i-1,j} \right) \\
&\displaystyle \quad \left.
  + \frac{\gamma}{2} \Delta_h u_{ij} \right], \label{MGN_DFT_b} \\
\end{array}
\end{equation}
\begin{equation}
\begin{array}{rl}
{\bf N}_{ij}^{(3)} ( \bg) := \rho^k_{ij} v_{ij} 
&\displaystyle   - s \left[  \frac{1}{2} \rho^k_{ij} \omega_{ij}^k u_{ij} \right.  \\ 
& \displaystyle \quad - \frac{\rho^k}{4h} \left( \mid u_{i,j+1} \mid^2 -\mid u_{i,j-1} \mid^2 \right) \\
&\displaystyle \quad- \frac{\rho^k}{4h} \left( \mid v_{i,j+1} \mid^2 -\mid v_{i,j-1} \mid^2 \right) \\
& \displaystyle  \quad - \frac{\rho^k_{ij}}{2h}  \left( \ln(\rho_{i,j+1}) - \ln (\rho_{i,j-1})  \right)\\
&\displaystyle \quad- \frac{2\rho^k_{ij}(J_e*1)}{2h}\left( \rho_{i,j-1} - \rho_{i,j-1} \right) \\
&\displaystyle \qquad \left.
 + \frac{\gamma}{2} \Delta_h v_{ij} \right], \label{MGN_DFT_c} \\
\end{array}
\end{equation}
 where $\omega^{k}_{ij}$ is as defined in Eq. (\ref{MG_DFT_d}).
The $3\times m\times n$ source $\mathbf{S}= ( S^{(1)} , S^{(2)}, S^{(3)})^T$ is defined
as
\begin{equation}
{\bf S}_{ij}^{(1)} := \rho^k_{ij}  - \frac{s}{4h}\left( \rho^k_{i+1,j} u^k_{i+1,j} - \rho^k_{i-1,j} u^k_{i-1,j} +\rho^k_{i,j+1} v^k_{i,j+1} - \rho^k_{i,j-1} v^k_{i,j-1}  \right), \label{MGS_DFT_a} 
\end{equation}

\begin{equation}
\begin{array}{rl}
\displaystyle{\bf S}_{ij}^{(2)} :=  \rho^k_{ij} u^{k}_{ij} 
&\displaystyle  + s \left[ -\frac{1}{2}\rho^k_{ij} \omega_{ij}^k v^{k}_{ij} \right.  \\ 
&\displaystyle \qquad 
+ \frac{\rho^k_{ij}}{2h}\left( J*\rho^{k}_{i+1,j} - J*\rho^{k}_{i-1,j} \right) \\
&\displaystyle \qquad \left.
 + \frac{2\rho^k_{ij}(J_e*1)}{2h}\left( \rho^{k}_{i+1,j} - \rho^{k}_{i-1,j} \right)  + \frac{\gamma}{2} \Delta_h u^{k}_{ij} \right], \label{MGS_DFT_b} \\
\end{array}
\end{equation}
\begin{equation}
\begin{array}{rl}
{\bf S}_{ij}^{(3)} := \rho^k_{ij} v^{k}_{ij}   
&\displaystyle  + s \left[ \frac{1}{2}\rho^k_{ij} \omega_{ij}^k u^{k}_{ij} \right.  \\ 
&\displaystyle \qquad 
+ \frac{\rho^k_{ij}}{2h}\left( J*\rho^{k}_{i,j+1} - J*\rho^{k}_{i,j-1} \right) \\
&\displaystyle \qquad \left.
 + \frac{2\rho^k_{ij}(J_e*1)}{2h}\left( \rho^{k}_{i,j+1} - \rho^{k}_{i,j-1} \right)  + \frac{\gamma}{2} \Delta_h v^{k}_{ij} \right]. \label{MGS_DFT_c} \\
\end{array}
\end{equation}

Given the $\ell$-th guess $(\rho^{\ell}, u^{\ell}, v^{\ell})$ for $(\rho^{k+1}, u^{k+1}, v^{k+1})$ we can get the $(\ell +1)$-th guess
for the $(\rho^{k+1}, u^{k+1}, v^{k+1})$ using the following smoothing scheme.
The smoothing scheme takes the form of a $3 \times 3$ linear system which is in turn solved exactly using Cramer's rule.
\begin{equation}
\displaystyle  \rho^{\ell+1}_{ij} = {\bf S}_{ij}^{(1)}   + \frac{s}{4h}\left( \rho^k_{i+1,j} u^\ell_{i+1,j} - \rho^k_{i-1,j} u^{\ell+1}_{i-1,j} +\rho^k_{i,j+1} v^\ell_{i,j+1} - \rho^k_{i,j-1} v^{\ell+1}_{i,j-1}  \right), \label{MGSM_DFT_a} 
\end{equation}

\begin{equation}
\begin{array}{rl}
\displaystyle \left( \rho^k_{ij}  +\frac{4\gamma}{2 h^2} \right) u^{\ell+1}_{ij} + \frac{s}{2} \rho^k_{ij} \omega_{ij}^k v^{\ell+1}_{ij} = 
&\displaystyle  {\bf S}_{ij}^{(2)} \\
& \displaystyle  + s \left[  - \frac{\rho^k}{4h} \left( \mid u^\ell_{i+1,j} \mid^2 -\mid u^{\ell+1}_{i-1,j} \mid^2 \right)  \right.\\
 & \displaystyle - \frac{\rho^k}{4h} \left( \mid v^\ell_{i+1,j} \mid^2 -\mid v^{\ell+1}_{i-1,j} \mid^2 \right)  \\ 
& \displaystyle   - \frac{\rho^k_{ij}}{2h}  \left( \ln(\rho^\ell_{i+1,j}) - \ln (\rho^{\ell+1}_{i-1,j})  \right) \\
& \displaystyle  - \frac{2\rho^k_{ij}(J_e*1)}{2h}\left( \rho^\ell_{i+1,j} - \rho^{\ell+1}_{i-1,j} \right) \\
&\displaystyle  \left.
  + \frac{\gamma}{2h^2}\left( u^\ell_{i+1,j} +u^{\ell+1}_{i-1,j} +u^\ell_{i,j+1} + u^{\ell+1}_{i,j-1} \right) \right], \label{MGSM_DFT_b} \\
\end{array}
\end{equation}

\begin{equation}
\begin{array}{rl}
 \displaystyle \left(\rho^k_{ij}+\frac{4\gamma}{2 h^2} \right) v^{\ell+1}_{ij} - \frac{s}{2} \rho^k_{ij} \omega_{ij}^k u^{\ell+1}_{ij}  =
&\displaystyle {\bf S}_{ij}^{(3)} \\
& \displaystyle  + s \left[ - \frac{\rho^k}{4h} \left( \mid u^\ell_{i,j+1} \mid^2 -\mid u^{\ell+1}_{i,j-1} \mid^2 \right) \right.\\
& \displaystyle  - \frac{\rho^k}{4h} \left( \mid v^\ell_{i,j+1} \mid^2 -\mid v^{\ell+1}_{i,j-1} \mid^2 \right)   \\ 
& \displaystyle   - \frac{\rho^k_{ij}}{2h}  \left( \ln(\rho^\ell_{i,j+1}) - \ln (\rho^{\ell+1}_{i,j-1})  \right)\\
& \displaystyle - \frac{2\rho^k_{ij}(J_e*1)}{2h}\left( \rho^\ell_{i,j+1} - \rho^{\ell+1}_{i,j-1} \right) \\
&\displaystyle  \left.
  + \frac{\gamma}{2h^2}\left( v^\ell_{i+1,j} +v^{\ell+1}_{i-1,j} +v^\ell_{i,j+1} + v^{\ell+1}_{i,j-1} \right) \right]. \label{MGSM_DFT_c} \\
\end{array}
\end{equation}

\subsection{Multigrid method for the convex splitting scheme for the PFC model}
\label{appendix_MGPFC}
In order to time step the implicit numerical scheme presented in Eqs. (\ref{discrete_a}) - (\ref{discrete_d}) with the PFC free energy
in Eqs. (\ref{CS_PFC_discrete})-(\ref{CS_PFC_discrete2})we need to solve for
$\rho^{k+1},u^{k+1}, v^{k+1} \in \mathcal{C}_{\bar{m}\times \bar{n}}$ such that $\bu^{k+1} = (u^{k+1},v^{k+1})^T$ given $\rho^{k},u^{k}, v^{k} \in \mathcal{C}_{\bar{m}\times \bar{n}}$ such that $\bu^{k} = (u^{k},v^{k})^T$ 
that satisfy the following equations
\begin{equation}
\displaystyle \rho^{k+1}_{ij} - \rho^k_{ij} = -\frac{s}{2h}\left( \rho^k_{i+1,j} u^{k+\frac{1}{2}}_{i+1,j} - \rho^k_{i-1,j} u^{k+\frac{1}{2}}_{i-1,j} +\rho^k_{i,j+1} v^{k+\frac{1}{2}}_{i,j+1} - \rho^k_{i,j-1} v^{k+\frac{1}{2}}_{i,j-1}  \right), \label{MG_PFC_a} 
\end{equation}

\begin{equation}
\begin{array}{rl}
\displaystyle \rho^k_{ij} ( u^{k+1}_{ij} - u^{k}_{ij} ) = 
&\displaystyle  s \left[ -\rho^k_{ij} \omega_{ij}^k v^{k+\frac{1}{2}}_{ij}  \right.  \\ 
& \displaystyle \qquad - \frac{\rho^k}{4h} \left( \mid u^{k+1}_{i+1,j} \mid^2 -\mid u^{k+1}_{i-1,j} \mid^2 \right) \\
& \displaystyle  \qquad- \frac{\rho^k}{4h} \left( \mid v^{k+1}_{i+1,j} \mid^2 -\mid v^{k+1}_{i-1,j} \mid^2 \right) \\
&\displaystyle \qquad \left.
 - \frac{\rho^k_{ij}}{2h}\left( \mu^{k+1}_{i+1,j} - \mu^{k+1}_{i-1,j} \right)  + \gamma \Delta_h u^{k+\frac{1}{2}}_{ij} \right], \label{MG_PFC_b} \\
\end{array}
\end{equation}

\begin{equation}
\begin{array}{rl}
\displaystyle \rho^k_{ij} ( v^{k+1}_{ij} - v^{k}_{ij} ) = 
&\displaystyle  s \left[ \rho^k_{ij} \omega_{ij}^k u^{k+\frac{1}{2}}_{ij}  \right.  \\ 
& \displaystyle \qquad- \frac{\rho^k}{4h} \left( \mid u^{k+1}_{i,j+1} \mid^2 -\mid u^{k+1}_{i,j-1} \mid^2 \right) \\
& \displaystyle  \qquad - \frac{\rho^k}{4h} \left( \mid v^{k+1}_{i,j+1} \mid^2 -\mid v^{k+1}_{i,j-1} \mid^2 \right) \\
&\displaystyle \qquad \left.
 - \frac{\rho^k_{ij}}{2h}\left( \mu^{k+1}_{i,j+1} - \mu^{k+1}_{i,j-1} \right)  + \gamma \Delta_h v^{k+\frac{1}{2}}_{ij} \right] \label{MG_PFC_c} \\
\end{array}
\end{equation}

and 

\begin{equation}
\mu_{ij}^{k+1} =  \frac{1}{3}\left( \rho^{k+1}_{ij}-\frac{3}{2} \right)^3 + \alpha \left( \rho^{k+1}_{ij}-\frac{3}{2} \right) + 2 \Delta_h \rho^k_{ij} +  \Delta_h \kappa^{k+1}_{ij} ,\label{MG_PFC_d} 
\end{equation}

\begin{equation}
\kappa^{k+1}_{ij} = \Delta_h \rho^{k+1}_{ij}, \label{MG_PFC_e} 
\end{equation}
where 

\begin{equation}
\omega^k_{ij} = \frac{1}{2h}\left( v^k_{i+1,j} - v^k_{i-1,j} - u^k_{i,j+1} + u^k_{i,j-1} \right)\label{MG_PFC_f}.
\end{equation}

The above non-linear problem can be written in terms of a non-linear operator $\mathbf{N}$ and source $\mathbf{S}$ such that
$\mathbf{N} = \mathbf{S}$. 
Let $\bg = ( \rho, u,v, \mu,\kappa )^T$ then the $5\times m\times n$ nonlinear operator $\mathbf{N}(\bg) = ( N^{(1)} (\bg), N^{(2)} (\bg), N^{(3)} (\bg), N^{(4)} (\bg), N^{(5)} (\bg))^T$ can be defined
as
\begin{equation}
{\bf N}_{ij}^{(1)} ( \bg) := \rho_{ij}  + \frac{s}{4h}\left( \rho^k_{i+1,j} u_{i+1,j} - \rho^k_{i-1,j} u_{i-1,j} +\rho^k_{i,j+1} v_{i,j+1} - \rho^k_{i,j-1} v_{i,j-1}  \right), \label{MGN_PFC_a} 
\end{equation}

\begin{equation}
\begin{array}{rl}
{\bf N}_{ij}^{(2)} ( \bg) := \rho^k_{ij} u_{ij} 
&\displaystyle  - s \left[ -\frac{1}{2} \rho^k_{ij} \omega_{ij}^k v_{ij}  \right.  \\ 
&\displaystyle \qquad - \frac{\rho^k}{4h} \left( \mid u_{i+1,j} \mid^2 -\mid u_{i-1,j} \mid^2 \right) \\
&\displaystyle \qquad  - \frac{\rho^k}{4h} \left( \mid v_{i+1,j} \mid^2 -\mid v_{i-1,j} \mid^2 \right) \\
& \qquad \displaystyle+ \frac{\rho^k_{ij}}{2h}\left( \mu_{i+1,j} - \mu_{i-1,j} \right) \\
&\displaystyle \qquad \left.
 + \frac{\gamma}{2} \Delta_h u_{ij} \right], \label{MGN_PFC_b} \\
\end{array}
\end{equation}
\begin{equation}
\begin{array}{rl}
{\bf N}_{ij}^{(3)} ( \bg) := \rho^k_{ij} v_{ij} 
&\displaystyle  - s \left[  \frac{1}{2} \rho^k_{ij} \omega_{ij}^k u_{ij} \right.  \\ 
& \displaystyle \qquad  - \frac{\rho^k}{4h} \left( \mid u_{i,j+1} \mid^2 -\mid u_{i,j-1} \mid^2 \right)\\
 &\displaystyle \qquad  - \frac{\rho^k}{4h} \left( \mid v_{i,j+1} \mid^2 -\mid v_{i,j-1} \mid^2 \right) \\
& \qquad \displaystyle+ \frac{\rho^k_{ij}}{2h}\left( \mu_{i,j-1} - \mu_{i,j-1} \right) \\
&\displaystyle \qquad \left.
  + \frac{\gamma}{2} \Delta_h v_{ij} \right], \label{MGN_PFC_c} \\
\end{array}
\end{equation}
\begin{equation}
{\bf N}_{ij}^{(4)} ( \bg) := \mu_{ij}^{k+1} -  \frac{1}{3}\left( \rho^{k+1}_{ij}-\frac{3}{2} \right)^3 - \alpha \left( \rho^{k+1}_{ij}-\frac{3}{2} \right) -  \Delta_h \kappa^{k+1}_{ij} ,
\label{MGN_PFC_d}
\end{equation}
\begin{equation}
{\bf N}_{ij}^{(5)} ( \bg) :=  \kappa^{k+1}_{ij} - \Delta_h \rho^{k+1}_{ij}, 
\label{MGN_PFC_e}
\end{equation}
 where $\omega^{k}_{ij}$ is as defined in Eq. (\ref{MG_PFC_f}). \\
The $5\times m\times n$ source $\mathbf{S}= ( S^{(1)} , S^{(2)}, S^{(3)}, S^{(4)}, S^{(5)})^T$ is defined
as
\begin{equation}
{\bf S}_{ij}^{(1)} := \rho^k_{ij}  - \frac{s}{4h}\left( \rho^k_{i+1,j} u^k_{i+1,j} - \rho^k_{i-1,j} u^k_{i-1,j} +\rho^k_{i,j+1} v^k_{i,j+1} - \rho^k_{i,j-1} v^k_{i,j-1}  \right), \label{MGS_PFC_a} 
\end{equation}

\begin{equation}
{\bf S}_{ij}^{(2)} :=  \rho^k_{ij} u^{k}_{ij} 
 + s \left[ -\frac{1}{2}\rho^k_{ij} \omega_{ij}^k v^{k}_{ij} + \frac{\gamma}{2} \Delta_h u^{k}_{ij} \right], \label{MGS_PFC_b} 
\end{equation}
\begin{equation}
{\bf S}_{ij}^{(3)} := \rho^k_{ij} v^{k}_{ij}   
  + s \left[ \frac{1}{2}\rho^k_{ij} \omega_{ij}^k u^{k}_{ij} + \frac{\gamma}{2} \Delta_h v^{k}_{ij} \right], \label{MGS_PFC_c} 
\end{equation}
\begin{equation}
{\bf S}_{ij}^{(4)} := 2 \Delta_h \rho^k_{ij}  \label{MGS_PFC_d} 
\end{equation}
\begin{equation}
{\bf S}_{ij}^{(5)} := 0  \label{MGS_PFC_e} 
\end{equation}

Given the $\ell$-th guess $(\rho^{\ell}, u^{\ell}, v^{\ell}, \mu^\ell, \kappa^{\ell})$ for $(\rho^{k+1}, u^{k+1}, v^{k+1},\mu^{k+1}, \kappa^{k+1})$ we can get the $(\ell +1)$-th guess
for the $(\rho^{k+1}, u^{k+1}, v^{k+1},\mu^{k+1}, \kappa^{k+1})$ using the following smoothing scheme.
The smoothing scheme presented here takes the form of a $5 \times 5$ linear system which is in turn solved exactly using Cramer's rule.
\begin{equation}
 \rho^{\ell+1}_{ij} = {\bf S}_{ij}^{(1)}   + \frac{s}{4h}\left( \rho^k_{i+1,j} u^\ell_{i+1,j} - \rho^k_{i-1,j} u^{\ell+1}_{i-1,j} +\rho^k_{i,j+1} v^\ell_{i,j+1} - \rho^k_{i,j-1} u^{\ell+1}_{i,j-1}  \right), \label{MGSM_PFC_a} 
\end{equation}

\begin{equation}
\begin{array}{rl}
\displaystyle \left( \rho^k_{ij}  +\frac{4\gamma}{2 h^2} \right) u^{\ell+1}_{ij} + \frac{s}{2} \rho^k_{ij} \omega_{ij}^k v^{\ell+1}_{ij} = 
&\displaystyle  {\bf S}_{ij}^{(2)} + s \left[  - \frac{\rho^k}{4h} \left( \mid u^\ell_{i+1,j} \mid^2 -\mid u^{\ell+1}_{i-1,j} \mid^2 \right) \right. \\
&\displaystyle- \frac{\rho^k}{4h} \left( \mid v^\ell_{i+1,j} \mid^2 -\mid v^{\ell+1}_{i-1,j} \mid^2 \right)   \\ 
& \displaystyle   
+ \frac{\rho^k_{ij}}{2h}\left( \mu^\ell_{i+1,j} - \mu^{\ell+1}_{i-1,j} \right) \\
&\displaystyle  \left.
  + \frac{\gamma}{2h^2}\left( u^\ell_{i+1,j} +u^{\ell+1}_{i-1,j} +u^\ell_{i,j+1} + u^{\ell+1}_{i,j-1} \right) \right], \label{MGSM_PFC_b} \\
\end{array}
\end{equation}

\begin{equation}
\begin{array}{rl}
\displaystyle \left(\rho^k_{ij}+\frac{4\gamma}{2 h^2} \right) v^{\ell+1}_{ij} - \frac{s}{2} \rho^k_{ij} \omega_{ij}^k u^{\ell+1}_{ij}  =
&\displaystyle {\bf S}_{ij}^{(3)} + s \left[ - \frac{\rho^k}{4h} \left( \mid u^\ell_{i,j+1} \mid^2 -\mid u^{\ell+1}_{i,j-1} \mid^2 \right)  \right.  \\
&\displaystyle- \frac{\rho^k}{4h} \left( \mid v^\ell_{i,j+1} \mid^2 -\mid v^{\ell+1}_{i,j-1} \mid^2 \right)  \\ 
& \displaystyle   + \frac{\rho^k_{ij}}{2h}\left( \mu^\ell_{i,j+1} - \mu^{\ell+1}_{i,j-1} \right) \\
&\displaystyle  \left.
  + \frac{\gamma}{2h^2}\left( v^\ell_{i+1,j} +v^{\ell+1}_{i-1,j} +v^\ell_{i,j+1} + v^{\ell+1}_{i,j-1} \right) \right], \label{MGSM_PFC_c} \\
\end{array}
\end{equation}

\begin{equation}
\begin{array}{rl}
\displaystyle-\left( \alpha +  \left( \rho_{ij}^\ell -3/2 \right)^2 \right) \rho_{ij}^{\ell +1} & \\
\displaystyle + \mu_{ij}^{\ell+1} + \frac{4}{h^2} \kappa_{ij}^{\ell +1}  \\
= &\displaystyle  {\bf S}_{ij}^{(4)} \\
&\displaystyle - \frac{2}{3} \left(\rho_{ij}^{\ell} -\frac{3}{2} \right)^3 + \frac{3}{2} \left( \alpha +  \left( \rho_{ij}^\ell -3/2 \right)^2 \right)  \\
& \displaystyle + \frac{1}{h^2}\left( \kappa_{i+1,j}^\ell + \kappa_{i-1,j}^{\ell +1} + \kappa_{i,j+1}^{\ell} + \kappa_{i,j-1}^{\ell+1} \right), 
\end{array}\label{MGSM_PFC_d} 
\end{equation}

\begin{equation}
 \frac{4}{h^2} \rho_{ij}^{\ell +1}  + \kappa_{ij}^{\ell +1} =  {\bf S}_{ij}^{(5)} + \frac{1}{h^2}\left( \rho_{i+1,j}^\ell + \rho_{i-1,j}^{\ell +1} + \rho_{i,j+1}^{\ell} + \rho_{i,j-1}^{\ell+1} \right). \label{MGSM_PFC_e} 
\end{equation}

In Eq. (\ref{MGSM_PFC_d}) we have linearized the cubic term using the approximation $(\phi^{\ell+1})^3 \approx 3 (\phi^{\ell})^2 \phi^{\ell+1}  - 2 (\phi^{\ell})^3$ within the lexicographic index $\ell$.

\subsection{ Damped Gauss-Seidel Iteration}
\label{DAMP}
To further aid the convergence of the method and improve the stability of the Gauss-Seidel method 
we use a damped Gauss- Seidel method.
Given $\rho^{k}$ and $\rho^{k+1,\ell}$ the $ell$-th guess for $\rho^{k+1}$, let $\rho^{k+1,\ell+1,GS}$ be the $(\ell+1)$-th guess obtained from a smoothing scheme.
The damped Gauss-Seidel method updates the new guess $\rho^{k+1,\ell +1}$ as
\begin{equation}
\rho^{k+1,\ell+1} = ( 1 - w_{damp} ) \rho^{k+1,\ell+1,GS}  + w_{damp} \rho^{k+1,\ell}
\end{equation}
where $w_{damp}$ is a damping co-efficient set to be $w_{damp} =0.5$ in all simulations presented.
\end{appendix}









\let\thefigure\thefigureSAVED
\let\thetable\thetableSAVED

\newpage
\begin{table}[ht]
	\begin{center}
	\begin{tabular}{|c|c|c|c|}
	\hline
$h_c$ & $h_f$ & $\parallel e_{\rho}^{h_c;h_f} \parallel_2$ & Rate of Convergence of $\rho$
	\\ 
	\hline
$\frac{32}{16}$ & $\frac{32}{32}$  & $4.4800 \times 10^{-5}$ & ---
	\\ 
	\hline
$\frac{32}{32}$ & $\frac{32}{64}$  & $2.4951 \times 10^{-7}$ & 7.4883
	\\
	\hline
$\frac{32}{64}$ & $\frac{32}{128}$  & $4.8685 \times 10^{-8}$ & 2.3576
	\\
	\hline
$\frac{32}{128}$ & $\frac{32}{256}$  & $1.0802 \times 10^{-8}$ & 2.1722
	\\
	\hline
$\frac{32}{256}$ & $\frac{32}{512}$  & $2.2465 \times 10^{-9}$ & 2.2655
	\\
	\hline
	\end{tabular}
\caption{The error and convergence rate for the density  for the convergence test of the hydrodynamic CDFT model discussed in Section \ref{Convergence_DFT}. }
	\label{Table_DFT1}
	\end{center}
	\end{table}
	
	\begin{table}[ht]
	\begin{center}
	\begin{tabular}{|c|c|c|c|}
	\hline
$h_c$ & $h_f$ & $\parallel e_{u}^{h_c;h_f} \parallel_2$ & Rate of Convergence of $u$
	\\ 
	\hline
$\frac{32}{16}$ & $\frac{32}{32}$  & $5.3251 \times 10^{-5}$ & ---
	\\ 
	\hline
$\frac{32}{32}$ & $\frac{32}{64}$  & $2.4077 \times 10^{-7}$ & 7.7890
	\\
	\hline
$\frac{32}{64}$ & $\frac{32}{128}$  & $4.6670 \times 10^{-8}$ & 2.3671
	\\
	\hline
$\frac{32}{128}$ & $\frac{32}{256}$  & $1.0715 \times 10^{-8}$ & 2.1229
	\\
	\hline
$\frac{32}{256}$ & $\frac{32}{512}$  & $2.5396  \times 10^{-9}$ & 2.0770
	\\
	\hline
	\end{tabular}
\caption{ The error and convergence rate for the horizontal component of the velocity for the convergence test of the hydrodynamic CDFT model  discussed in Section \ref{Convergence_DFT}. }
	\label{Table_DFT2}
	\end{center}
	\end{table}

\begin{table}[ht]
	\begin{center}
	\begin{tabular}{|c|c|c|c|}
	\hline
$h_c$ & $h_f$ & $\parallel e_{v}^{h_c;h_f} \parallel_2$ & Rate of Convergence of $v$
	\\ 
	\hline
$\frac{32}{16}$ & $\frac{32}{32}$  & $5.3467\times 10^{-5}$ & ---
	\\ 
	\hline
$\frac{32}{32}$ & $\frac{32}{64}$  & $2.5007\times 10^{-7}$ & 7.7402
	\\
	\hline
$\frac{32}{64}$ & $\frac{32}{128}$  & $4.8696e \times 10^{-8}$ & 2.3604
	\\
	\hline
$\frac{32}{128}$ & $\frac{32}{256}$  & $1.1342\times 10^{-8}$ & 2.1021
	\\
	\hline
$\frac{32}{256}$ & $\frac{32}{512}$  & $2.7102  \times 10^{-9}$ & 2.0652
	\\
	\hline
	\end{tabular}
\caption{ The error and convergence rate for the vertical component of the velocity  for the convergence test of the hydrodynamic CDFT model discussed in Section  \ref{Convergence_DFT} }
	\label{Table_DFT3}
	\end{center}
	\end{table}


\begin{table}[ht]
	\begin{center}
	\begin{tabular}{|c|c|c|c|}
	\hline
$h_c$ & $h_f$ & $\parallel e_{rho}^{h_c;h_f} \parallel_2$ & Rate of Convergence of $\rho$
	\\ 
	\hline
$\frac{32}{16}$ & $\frac{32}{32}$  & $5.9041\times 10^{-6}$ & ---
	\\ 
	\hline
$\frac{32}{32}$ & $\frac{32}{64}$  & $1.1318\times 10^{-6}$ & 2.3831
	\\
	\hline
$\frac{32}{64}$ & $\frac{32}{128}$  & $2.6468 \times 10^{-7}$ & 2.0963
	\\
	\hline
$\frac{32}{128}$ & $\frac{32}{256}$  & $6.2968 \times 10^{-8}$ & 2.0716
	\\
	\hline
$\frac{32}{256}$ & $\frac{32}{512}$  & $1.5300  \times 10^{-8}$ & 2.0411 
	\\
	\hline
	\end{tabular}
\caption{The error and convergence rate for the density  for the convergence test of the hydrodynamic PFC model discussed in Section  \ref{Convergence_PFC}. }
	\label{Table_PFC1}
	\end{center}
	\end{table}
	
	\begin{table}[ht]
	\begin{center}
	\begin{tabular}{|c|c|c|c|}
	\hline
$h_c$ & $h_f$ & $\parallel e_{u}^{h_c;h_f} \parallel_2$ & Rate of Convergence of $u$
	\\ 
	\hline
$\frac{32}{16}$ & $\frac{32}{32}$  & $7.8360\times 10^{-7}$ & ---
	\\ 
	\hline
$\frac{32}{32}$ & $\frac{32}{64}$  & $1.3122\times 10^{-7}$ & 2.5781
	\\
	\hline
$\frac{32}{64}$ & $\frac{32}{128}$  & $2.5793 \times 10^{-8}$ & 2.3470
	\\
	\hline
$\frac{32}{128}$ & $\frac{32}{256}$  & $5.6521 \times 10^{-9}$ & 2.1901
	\\
	\hline
$\frac{32}{256}$ & $\frac{32}{512}$  & $1.3191  \times 10^{-9}$ & 2.0992
	\\
	\hline
	\end{tabular}
\caption{ The error and convergence rate for the horizontal component of the velocity   for the convergence test of the hydrodynamic PFC model discussed in Section  \ref{Convergence_PFC}. }
	\label{Table_PFC2}
	\end{center}
	\end{table}

\begin{table}[ht]
	\begin{center}
	\begin{tabular}{|c|c|c|c|}
	\hline
$h_c$ & $h_f$ & $\parallel e_{v}^{h_c;h_f} \parallel_2$ & Rate of Convergence of $v$
	\\ 
	\hline
$\frac{32}{16}$ & $\frac{32}{32}$  & $6.6965\times 10^{-7}$ & ---
	\\ 
	\hline
$\frac{32}{32}$ & $\frac{32}{64}$  & $1.4285\times 10^{-7}$ & 2.2289
	\\
	\hline
$\frac{32}{64}$ & $\frac{32}{128}$  & $3.2625 \times 10^{-8}$ & 2.1305
	\\
	\hline
$\frac{32}{128}$ & $\frac{32}{256}$  & $7.7450 \times 10^{-9}$ & 2.0747
	\\
	\hline
$\frac{32}{256}$ & $\frac{32}{512}$  & $1.8833  \times 10^{-9}$ & 2.0400
	\\
	\hline
	\end{tabular}
\caption{ The error and convergence rate for the vertical component of the velocity for the convergence test of the hydrodynamic PFC model discussed in Section  \ref{Convergence_PFC}.}
	\label{Table_PFC3}
	\end{center}
	\end{table}

\begin{figure}[!ht]
\begin{center}
\begin{picture}(270,500)
\put(0,-5){\includegraphics[scale=0.6]{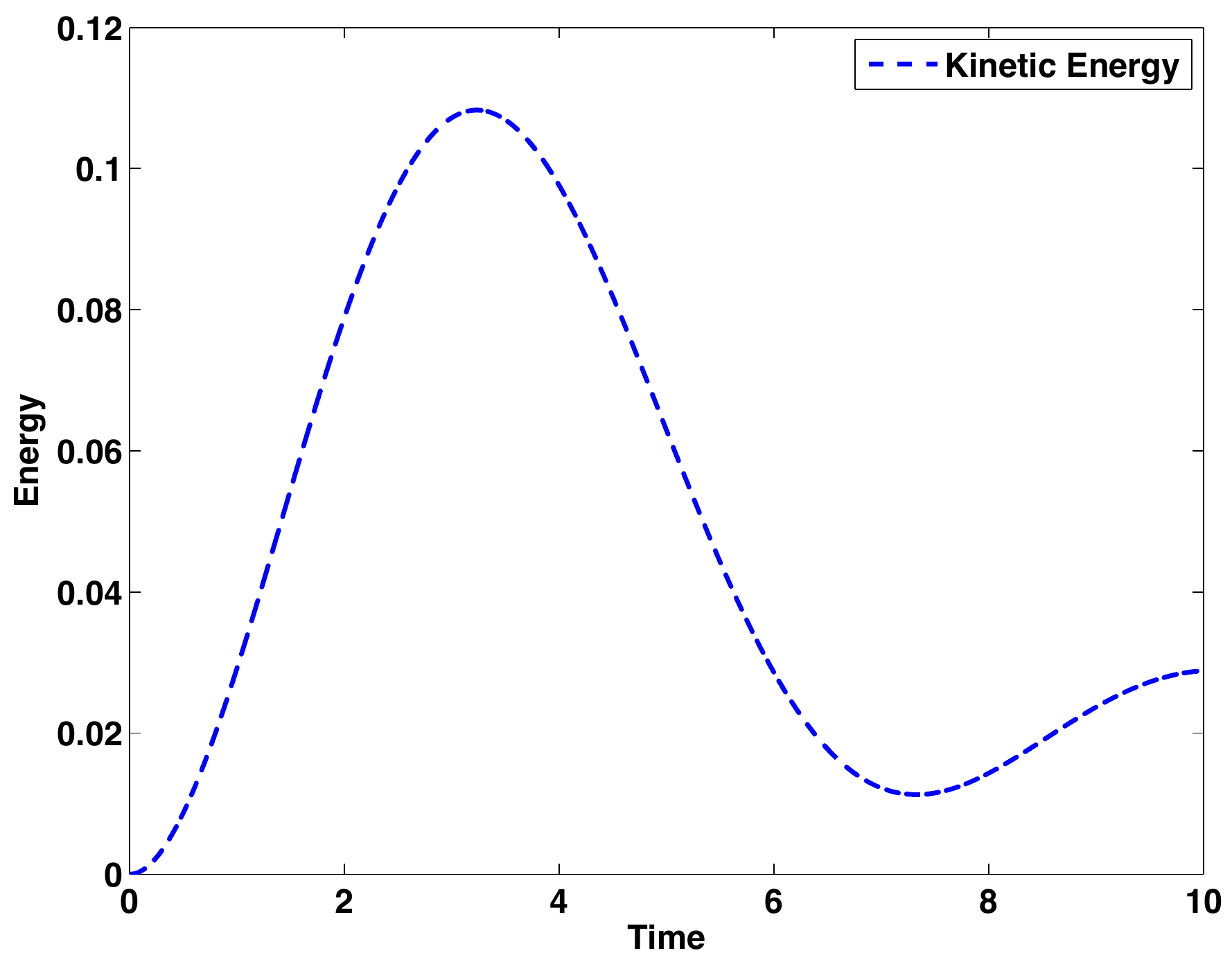}}
\put(0, 235){\includegraphics[scale=0.6]{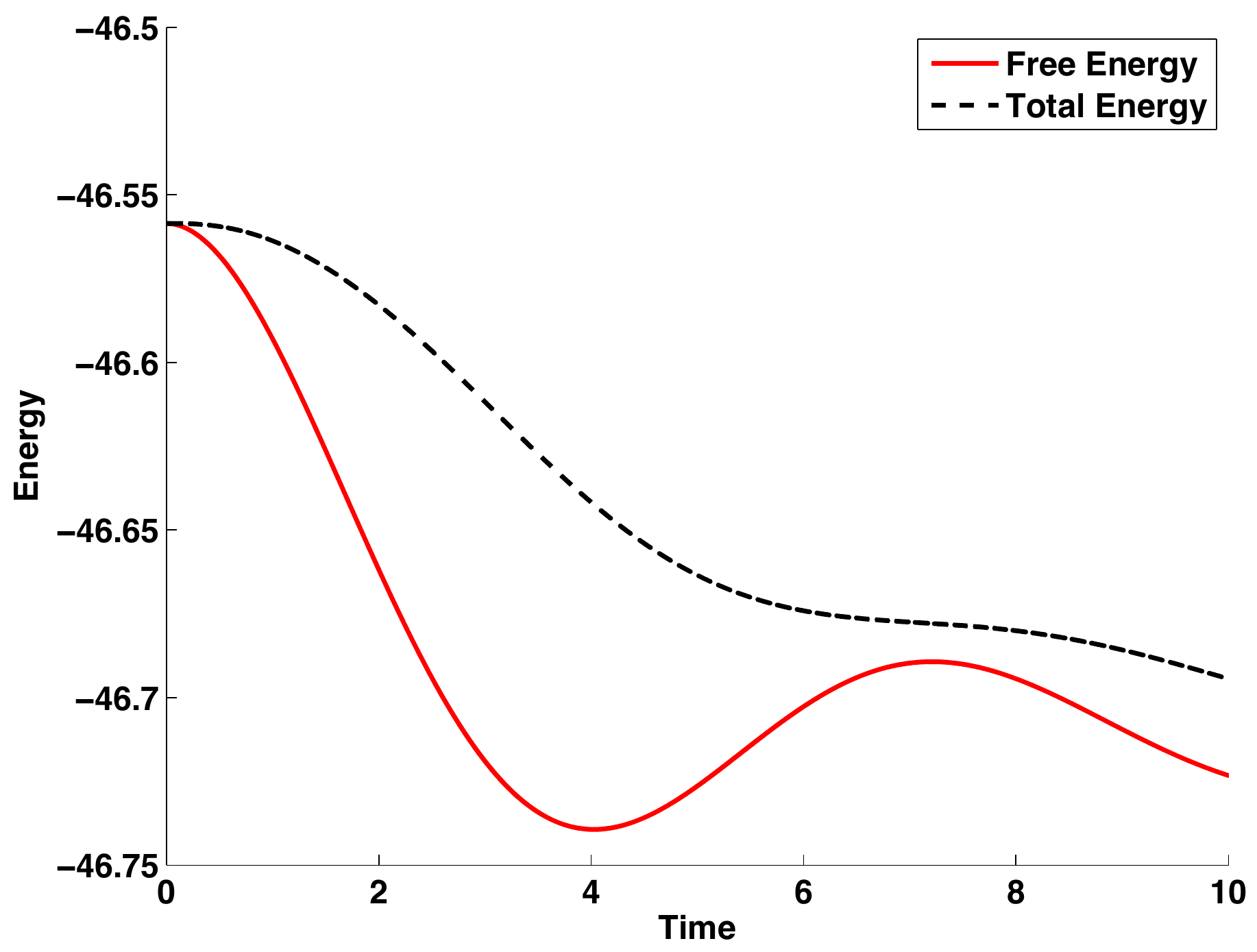}}
\end{picture}
\end{center}
\caption{The evolution of kinetic energy, Helmholtz free energy $\mathcal{F}[\rho]$ and total energy $\mathcal{E}[\rho, \bu]$, as labeled,  for the
simulation corresponding to the convergence test of the Hydrodynamic  CDFT model described in Section  \ref{Convergence_DFT}  with ($h=512$).}
\label{fig_DFT_t1}
\end{figure}

\begin{figure}[!ht]
\begin{center}
\begin{picture}(270,500)
\put(0,-5){\includegraphics[scale=0.6]{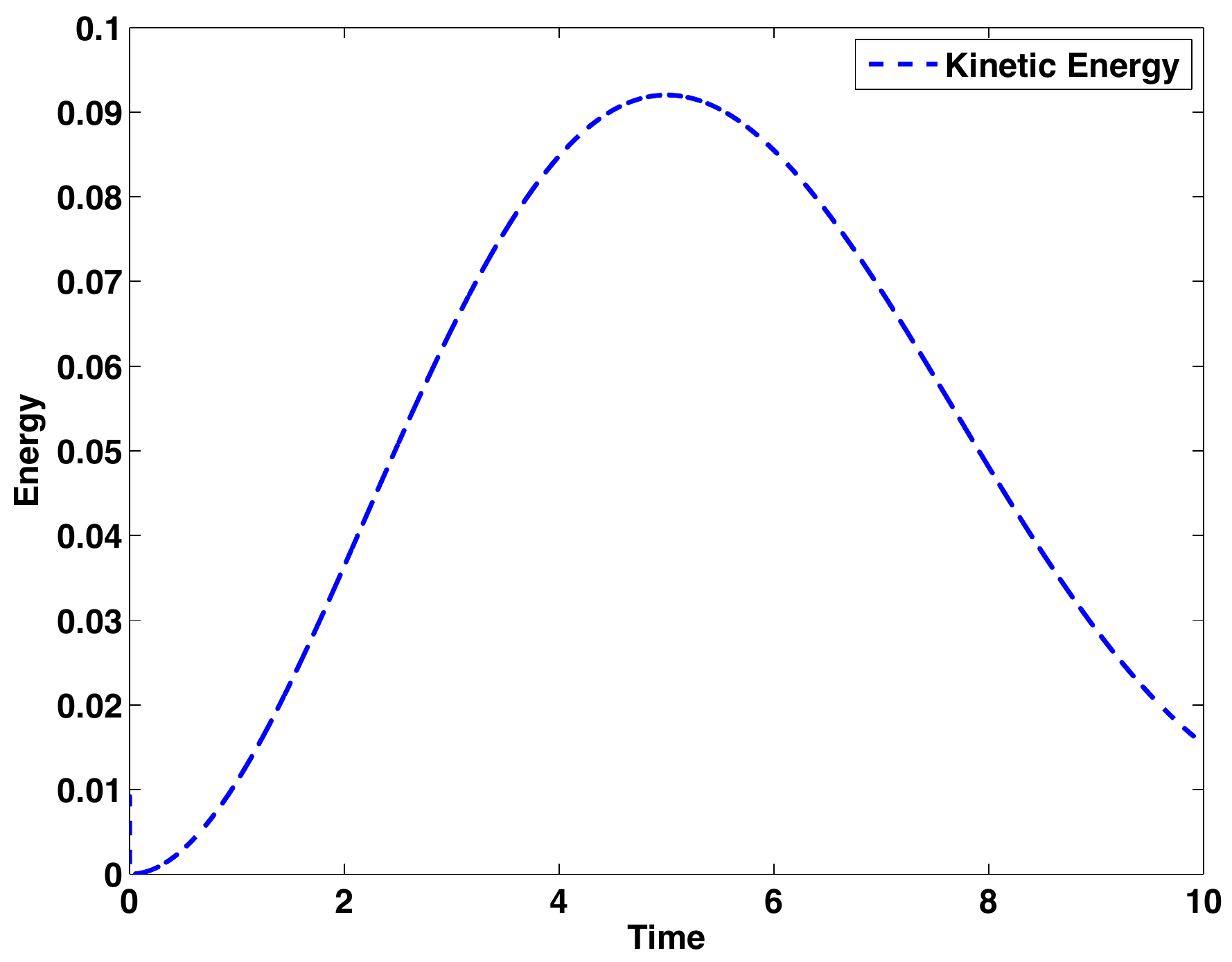}}
\put(0, 235){\includegraphics[scale=0.6]{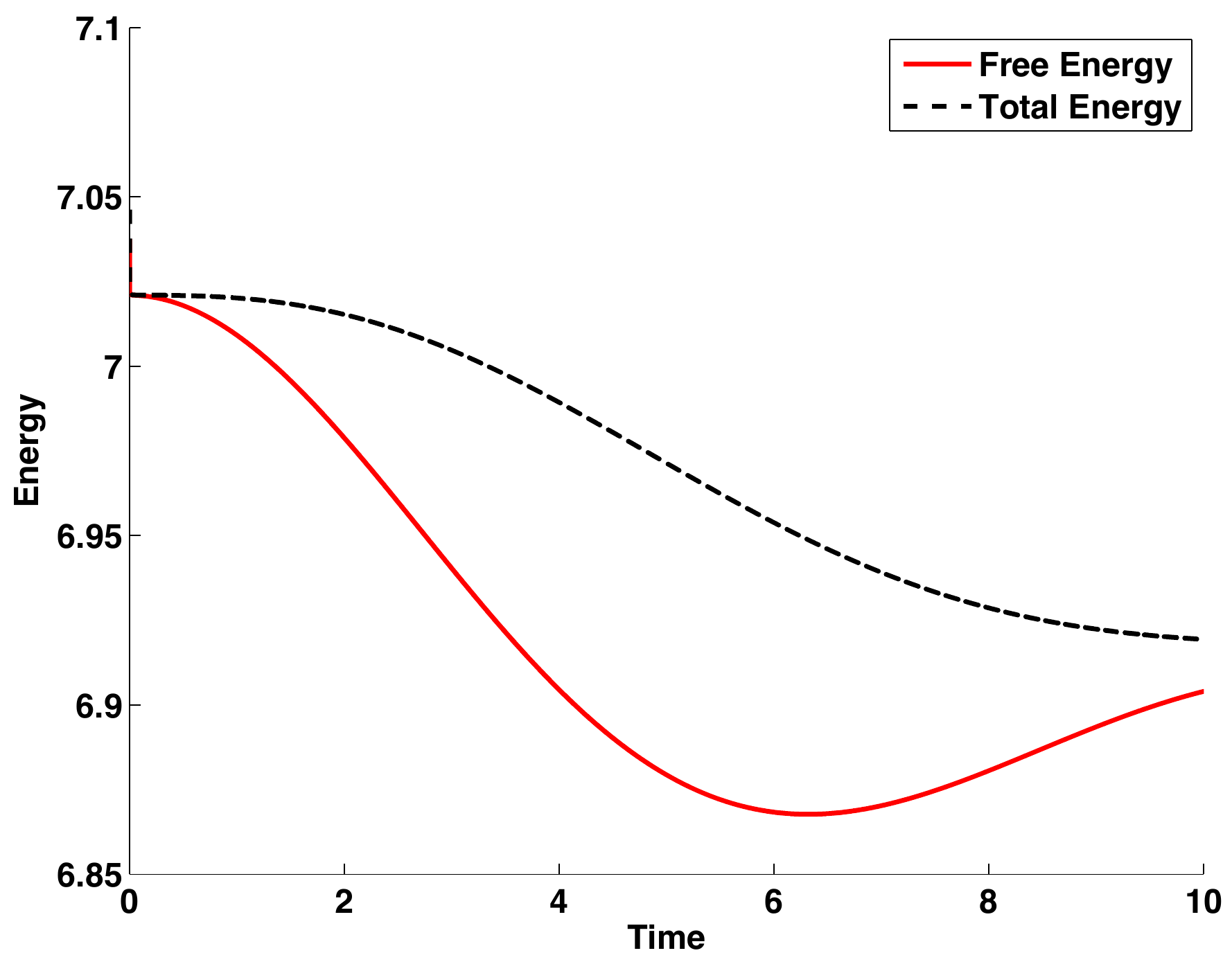}}
\end{picture}
\end{center}
\caption{The evolution of kinetic energy, HelmholtzfFree energy $\mathcal{F}[\rho]$ and total energy $\mathcal{E}[\rho, \bu]$, as labeled, for the
simulation corresponding to the convergence test of the Hydrodynamic  PFC model described in Section  \ref{Convergence_PFC} with ($h=512$).}
\label{fig_PFC_t1}
\end{figure}

\begin{figure}[!ht]
\begin{center}
\begin{picture}(350,140)
\put(-30,5){\includegraphics[scale=0.39]{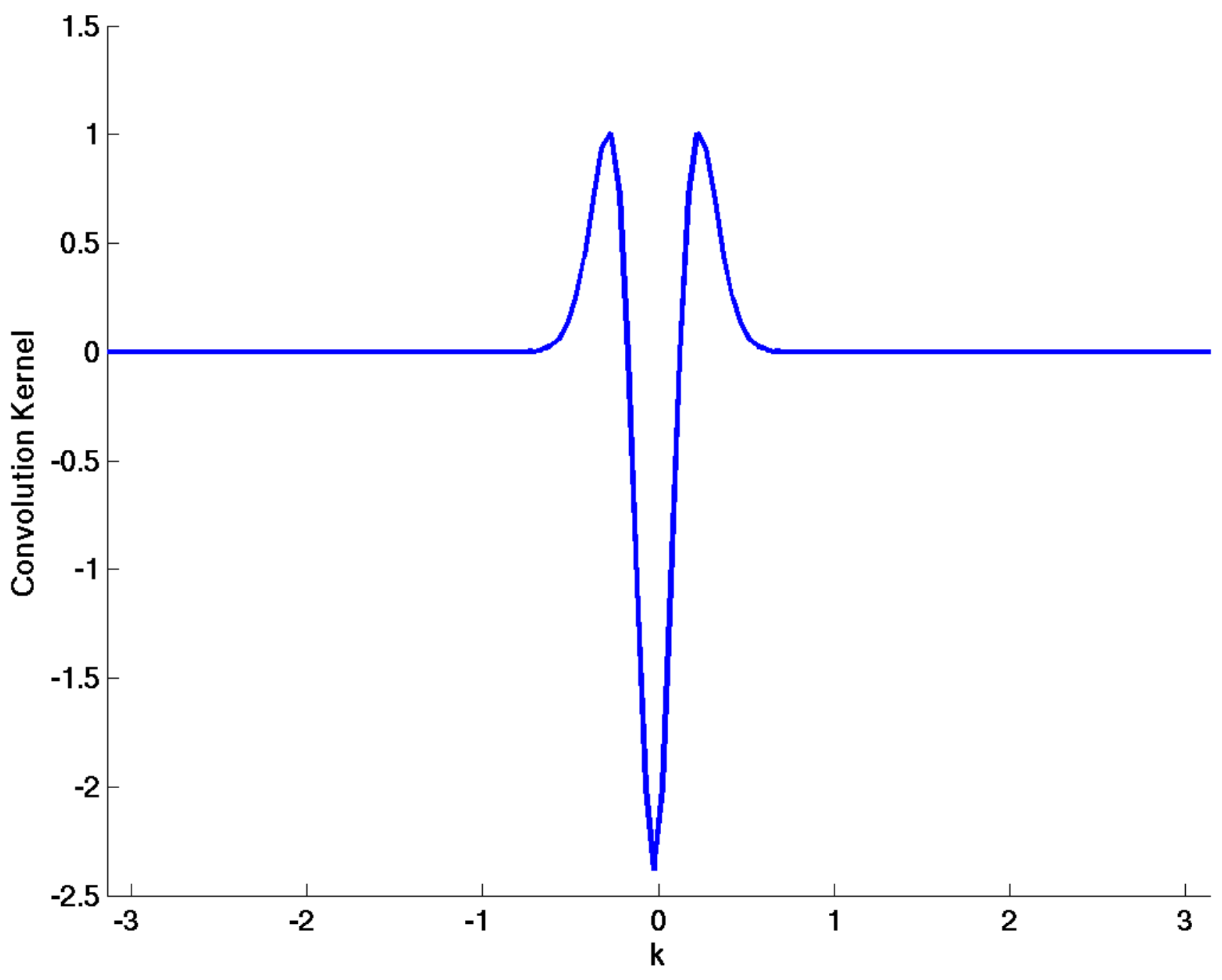}}
\put(175,5){\includegraphics[scale=0.39]{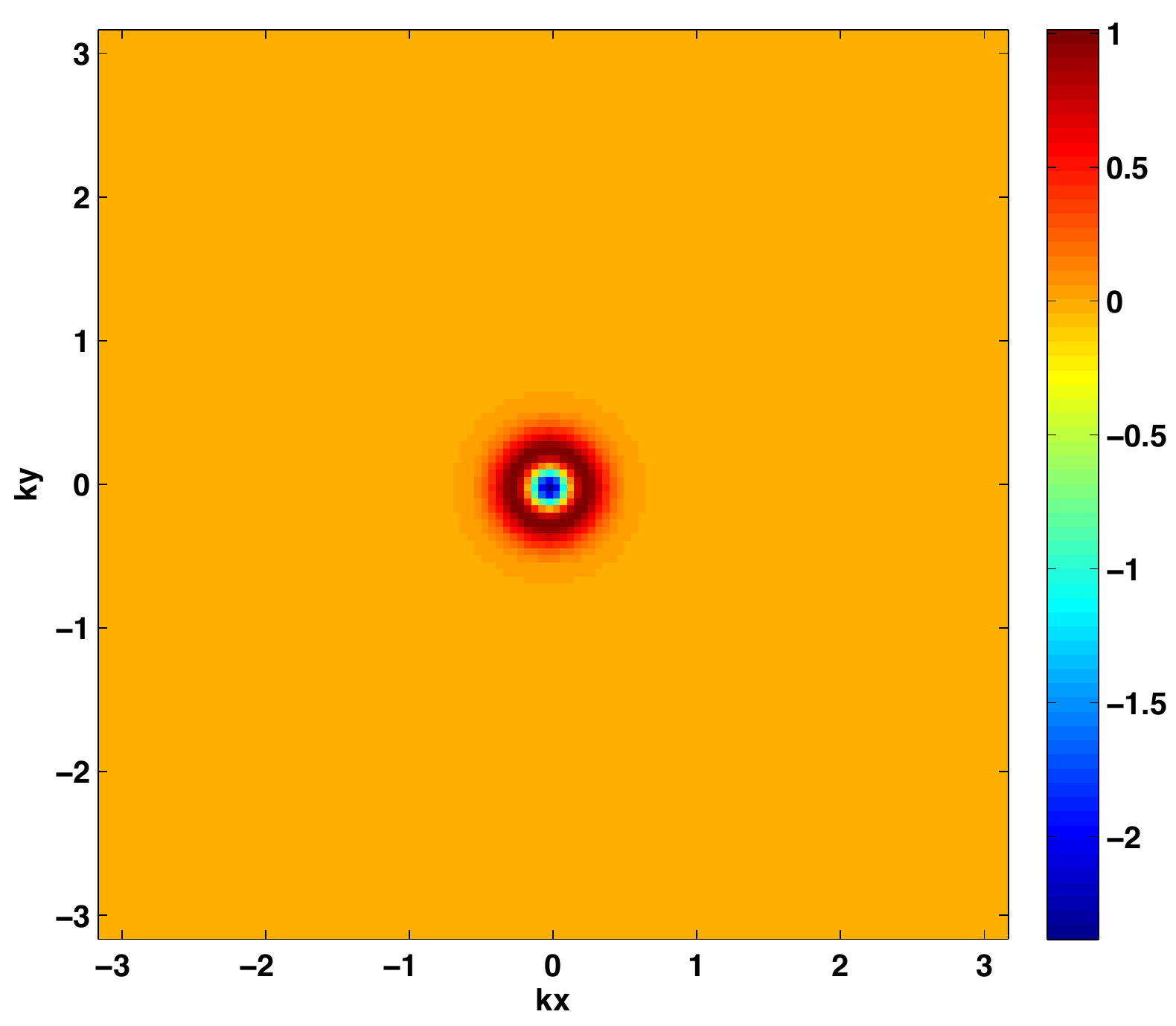}}
\end{picture}
\end{center}
\caption{The figure shows the discrete Fourier image of the convolution kernel $J$ used in the freezing simulation (see Section  \ref{freezing}). 
The Fourier image is radially symmetric and represented in 2-D (right) and in 1-D (left) along the radial direction.}
\label{fig_conv}

\end{figure}

\begin{figure}[!ht]
\begin{center}
\begin{picture}(350,530)
\put(-0,0){\includegraphics[scale=0.4]{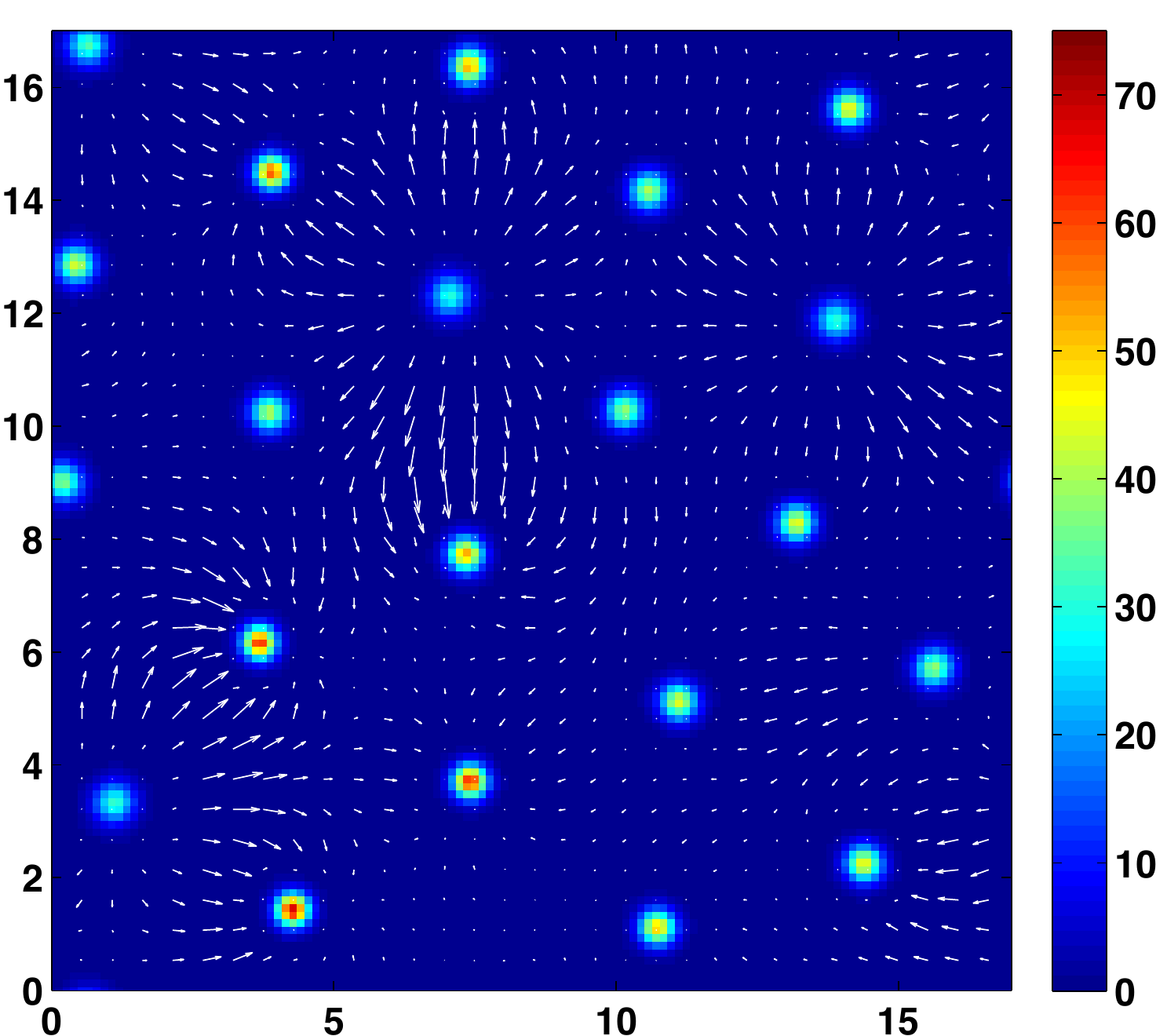}}
\put(170,0){\includegraphics[scale=0.4]{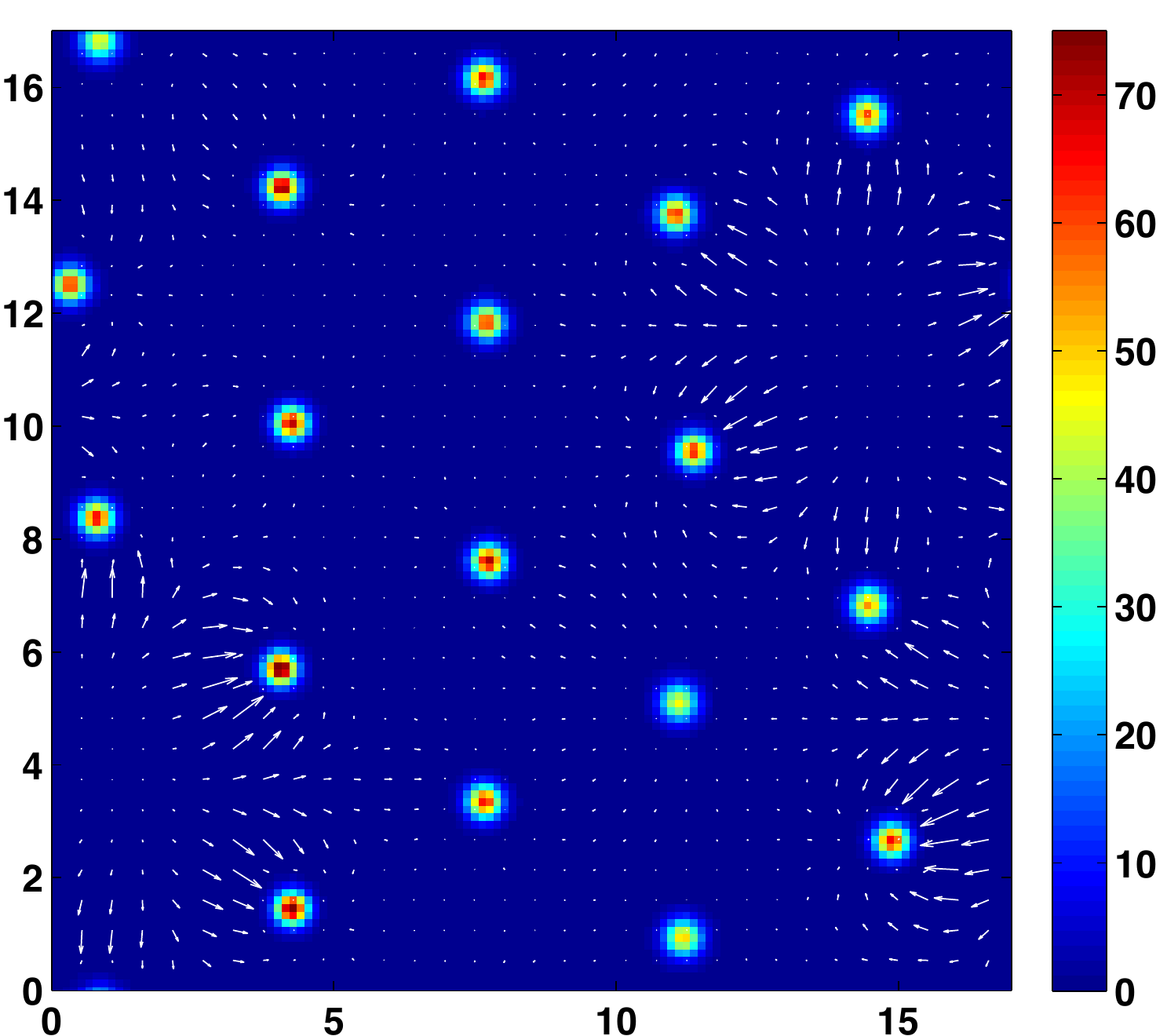}}
\put(-0,175){\includegraphics[scale=0.4]{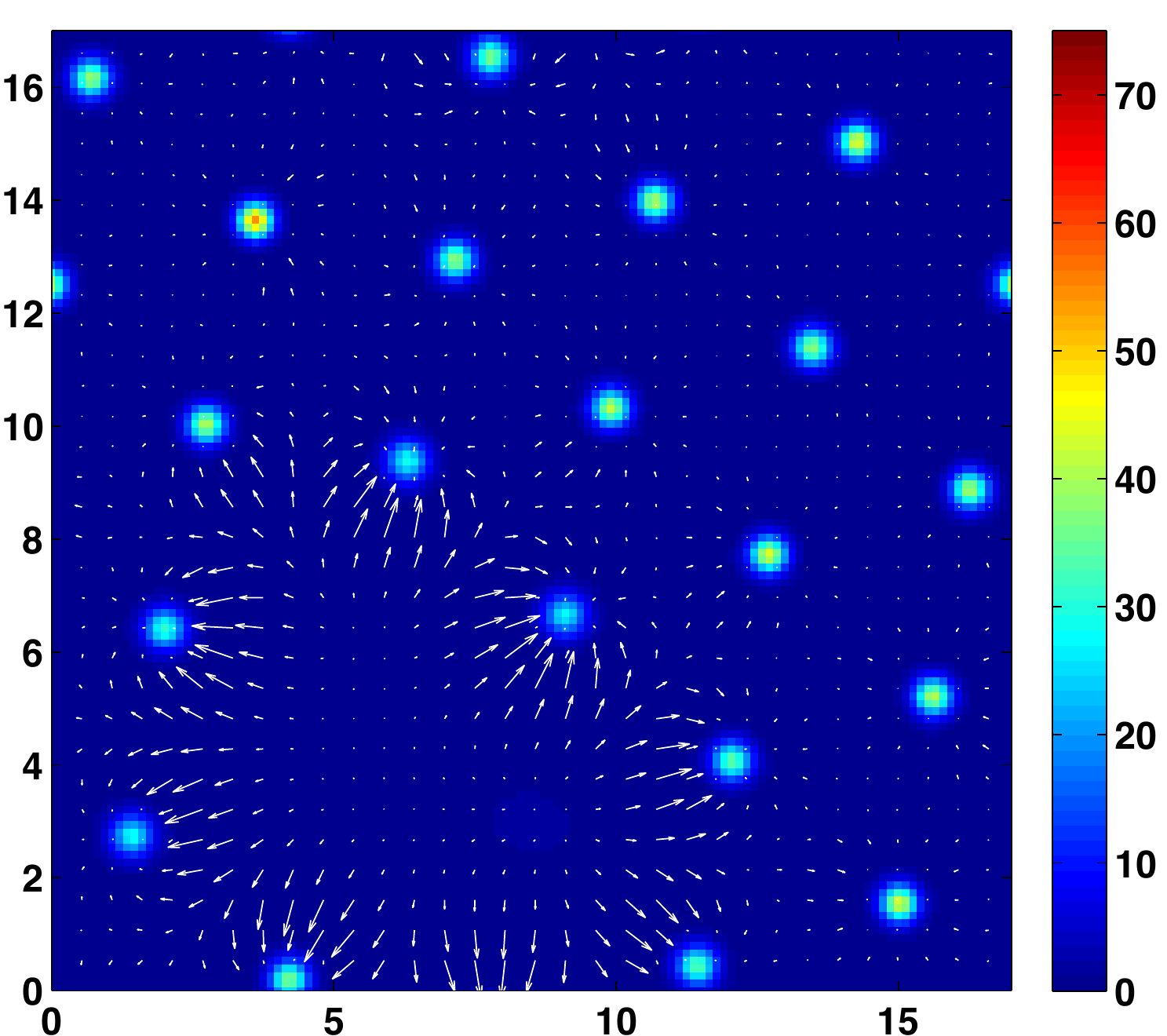}}
\put(170,175){\includegraphics[scale=0.4]{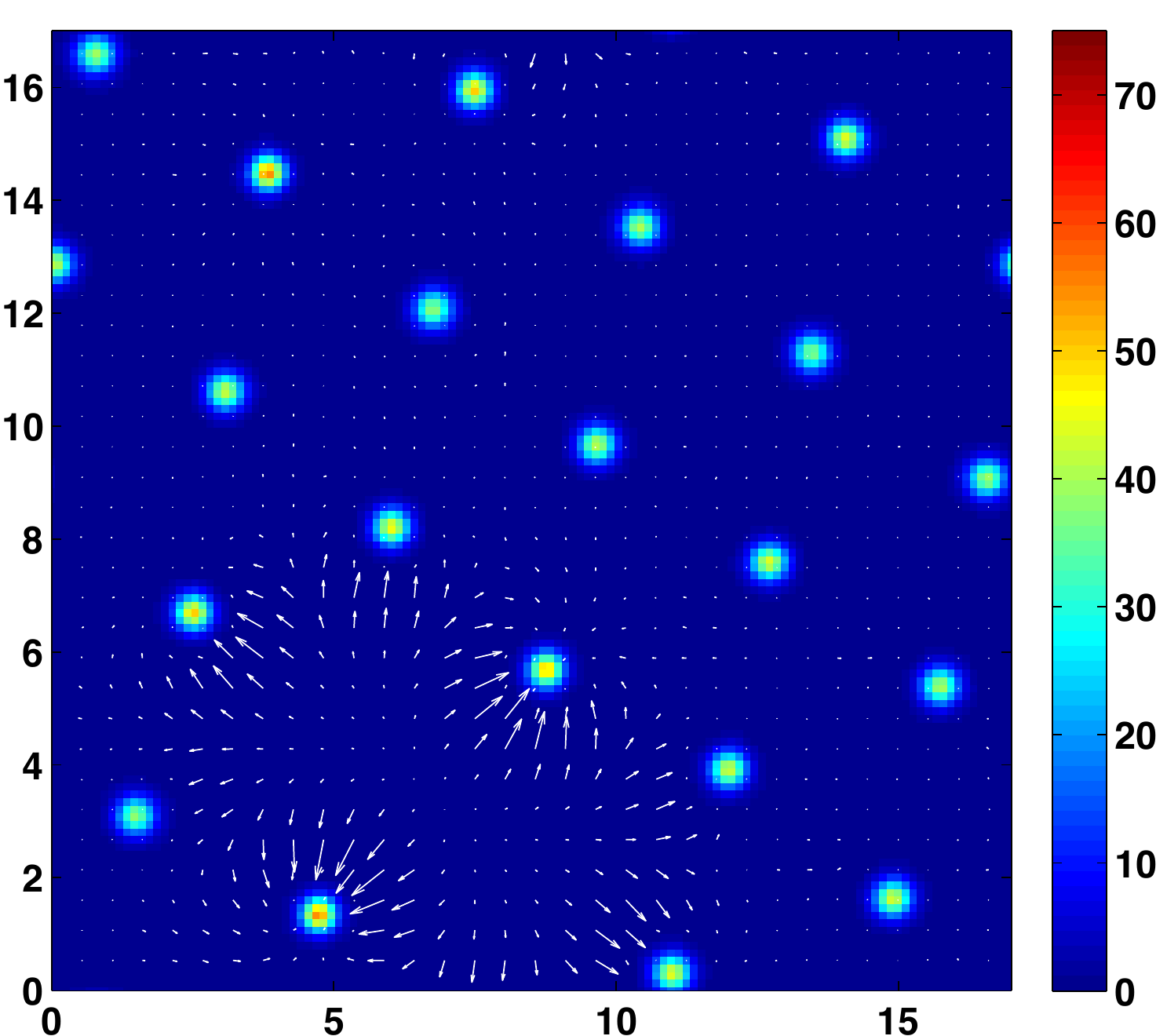}}
\put(-0,350){\includegraphics[scale=0.4]{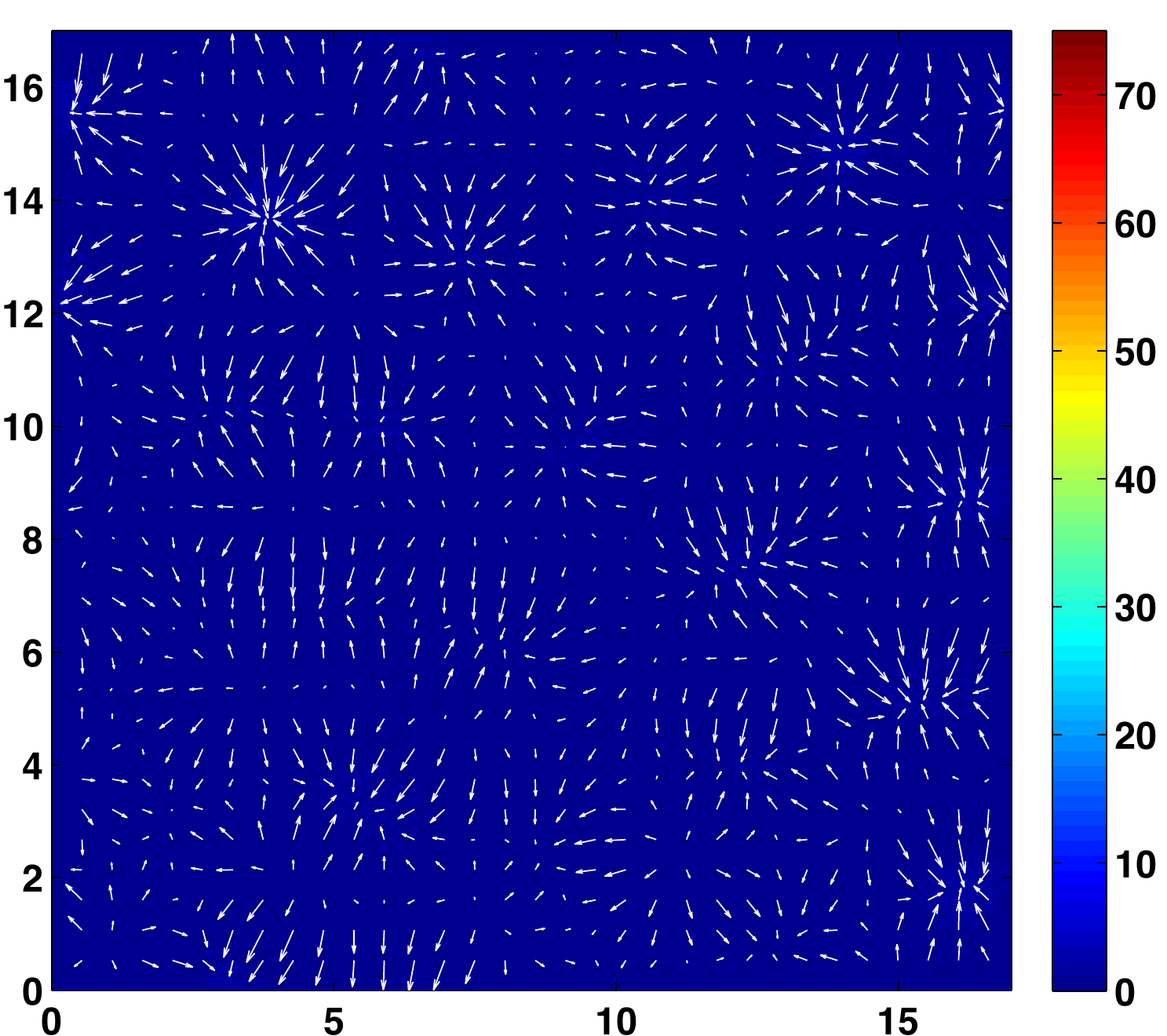}}
\put(170,350){\includegraphics[scale=0.4]{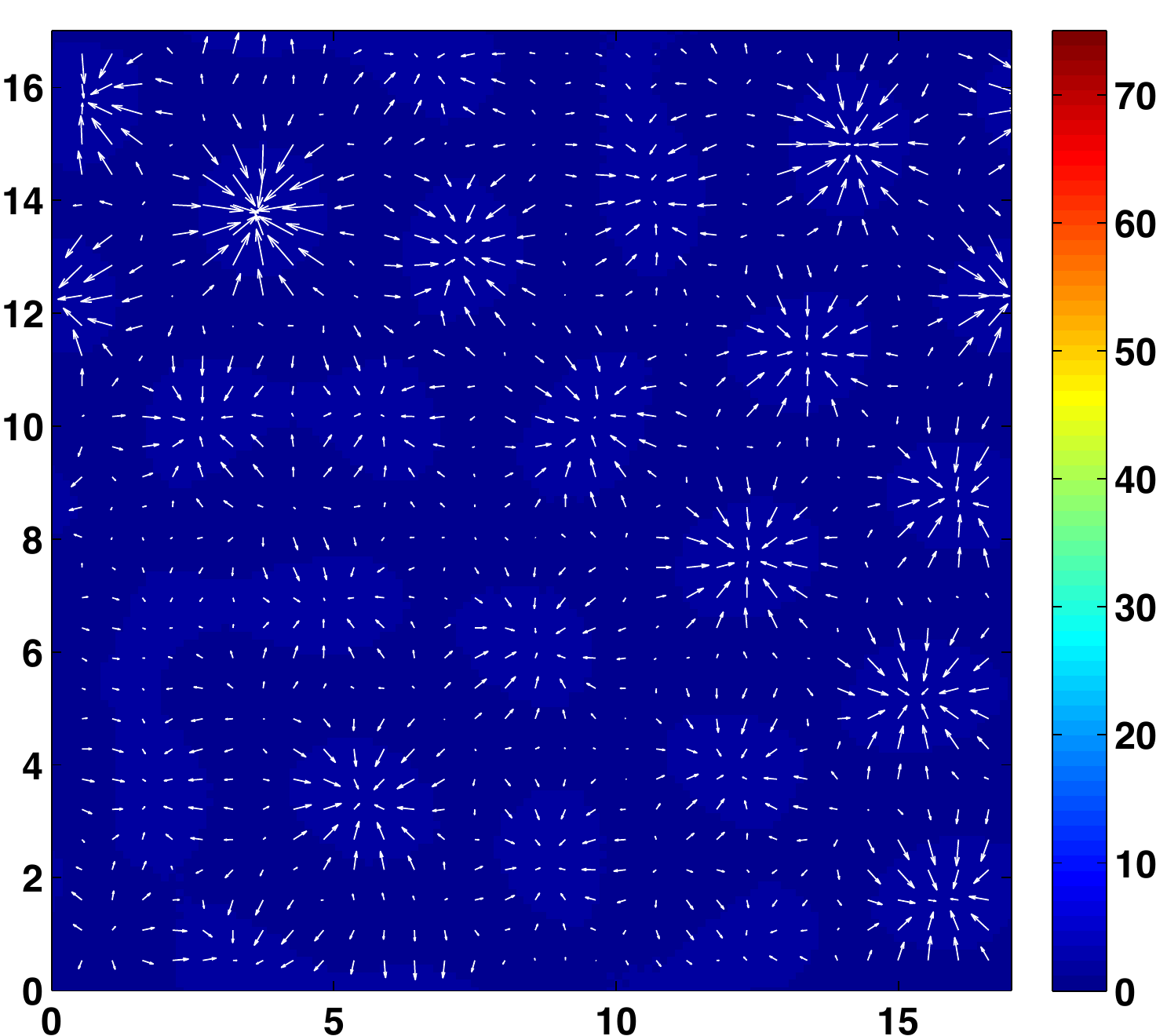}}
\put(5,-10){\text{\bf t =20000.0}}
\put(180,-10){\text{\bf t =28000.0}}
\put(5,165){\text{\bf t =600.0}}
\put(180,165){\text{\bf t =2400.0}}
\put(5,340){\text{\bf t =200.0}}
\put(180,340){\text{\bf t =400.0}}
\end{picture}
\end{center}
\caption{The time evolution of the density field with the velocity field superimposed. The figure shows the liquid to solid phase transition
in the hydrodynamic CDFT model discussed in Section \ref{freezing}. }
\label{fig_DFT_t2b}
\end{figure}

\begin{figure}[!ht]
\begin{center}
\begin{picture}(270,530)
\put(0,-5){\includegraphics[scale=0.48]{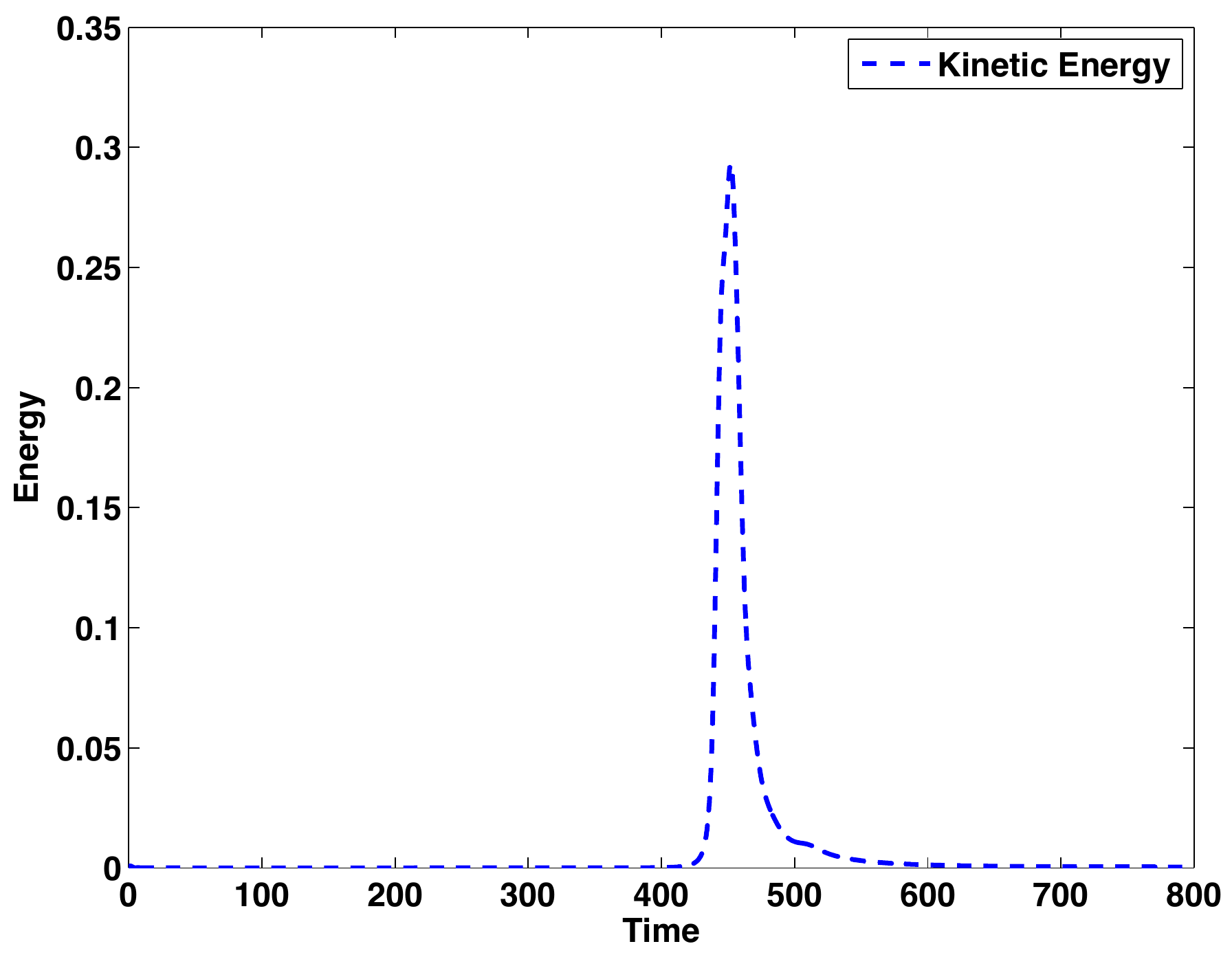}}
\put(0, 195){\includegraphics[scale=0.48]{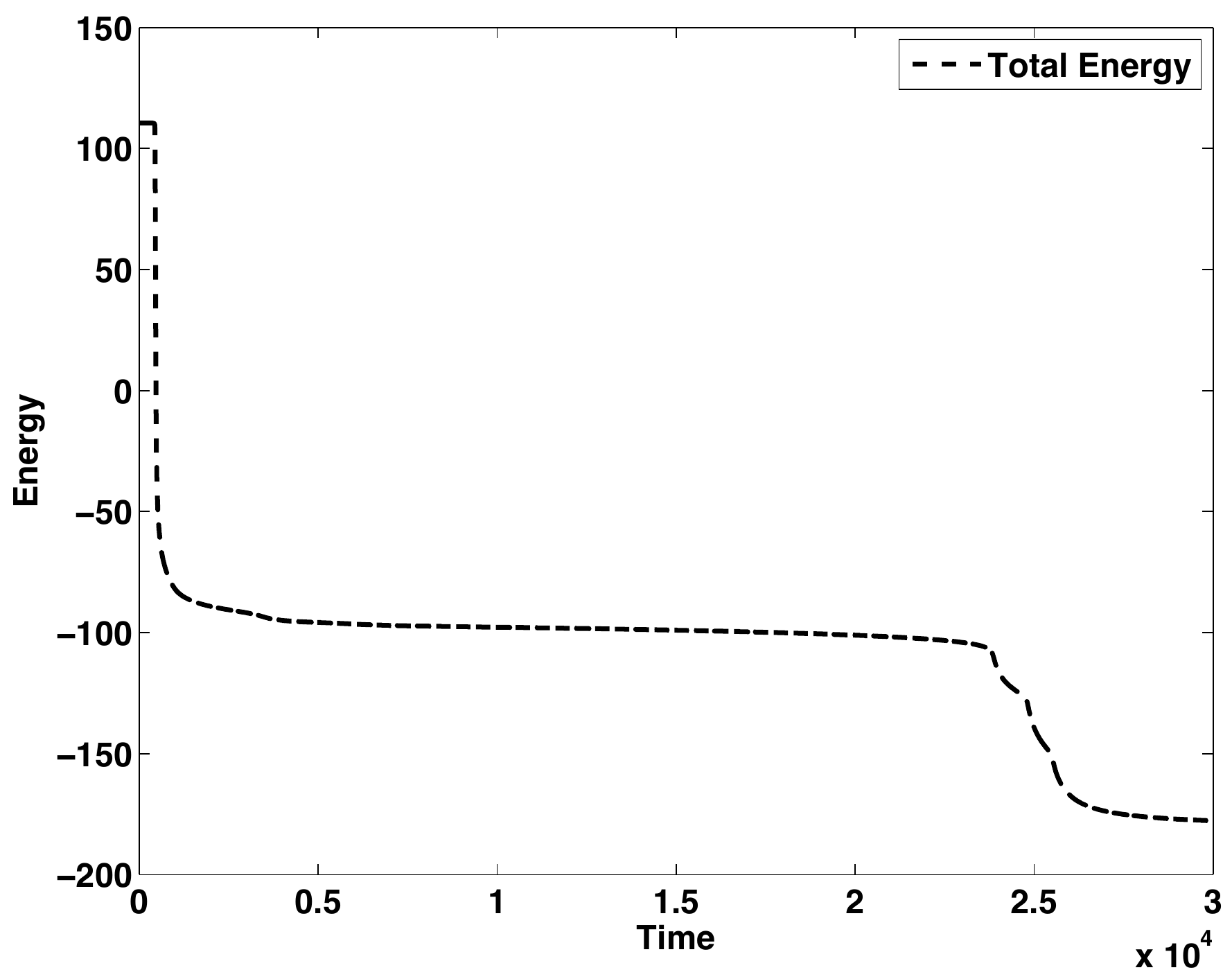}}
\put(0, 395){\includegraphics[scale=0.48]{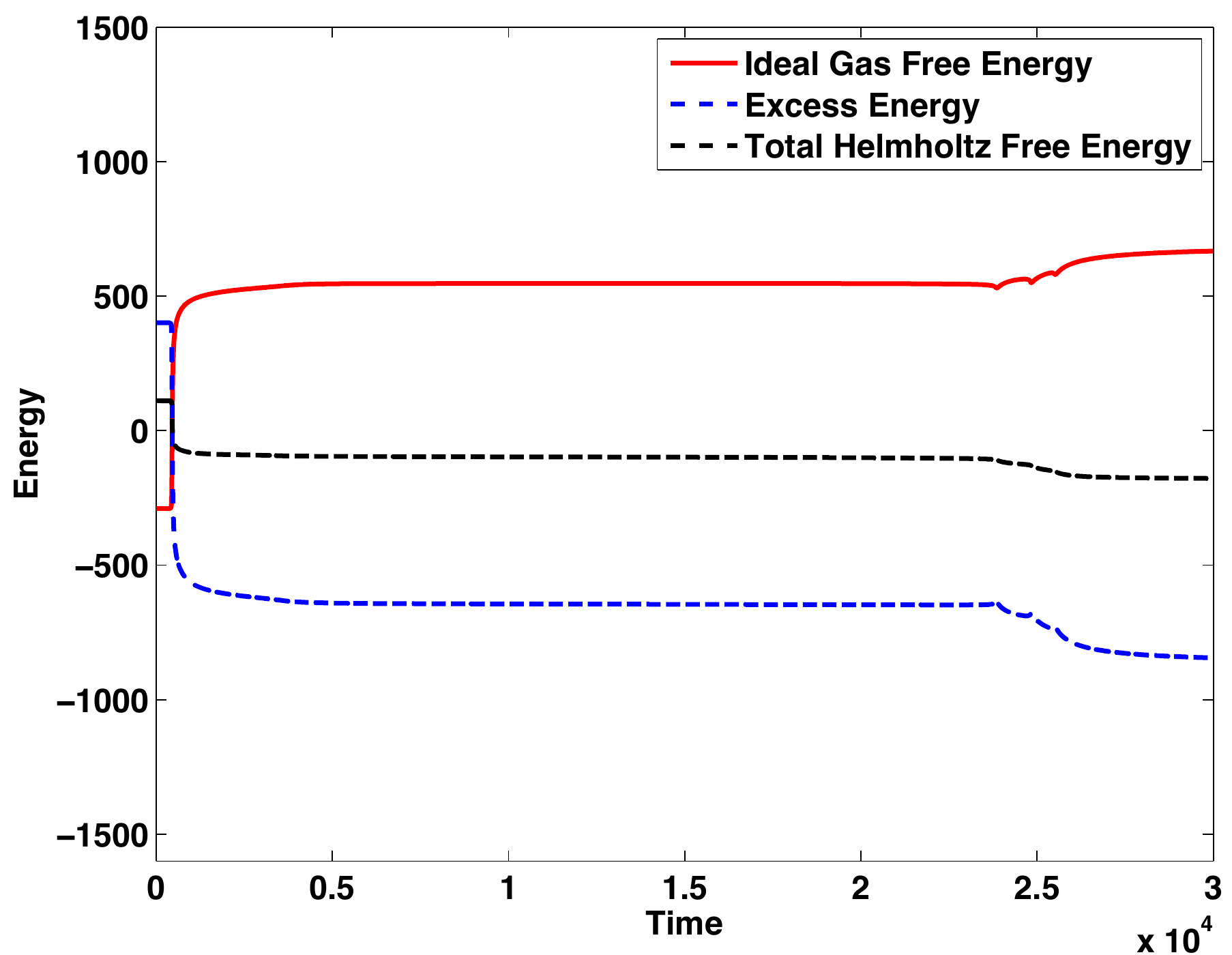}}
\end{picture}
\end{center}
\caption{The evolution of kinetic energy, Helmholtz free energy $\mathcal{F}[\rho]$ and total energy $\mathcal{E}[\rho, \bu]$, as labeled,   for the simulation of freezing of 
the hydrodynamic  CDFT model described in Section  \ref{freezing} and shown in Figure \ref{fig_DFT_t2b}. }
\label{fig_DFT_t2a}
\end{figure}

\begin{figure}[!ht]
\begin{center}
\begin{picture}(350,480)
\put(0,0){\includegraphics[scale=0.39]{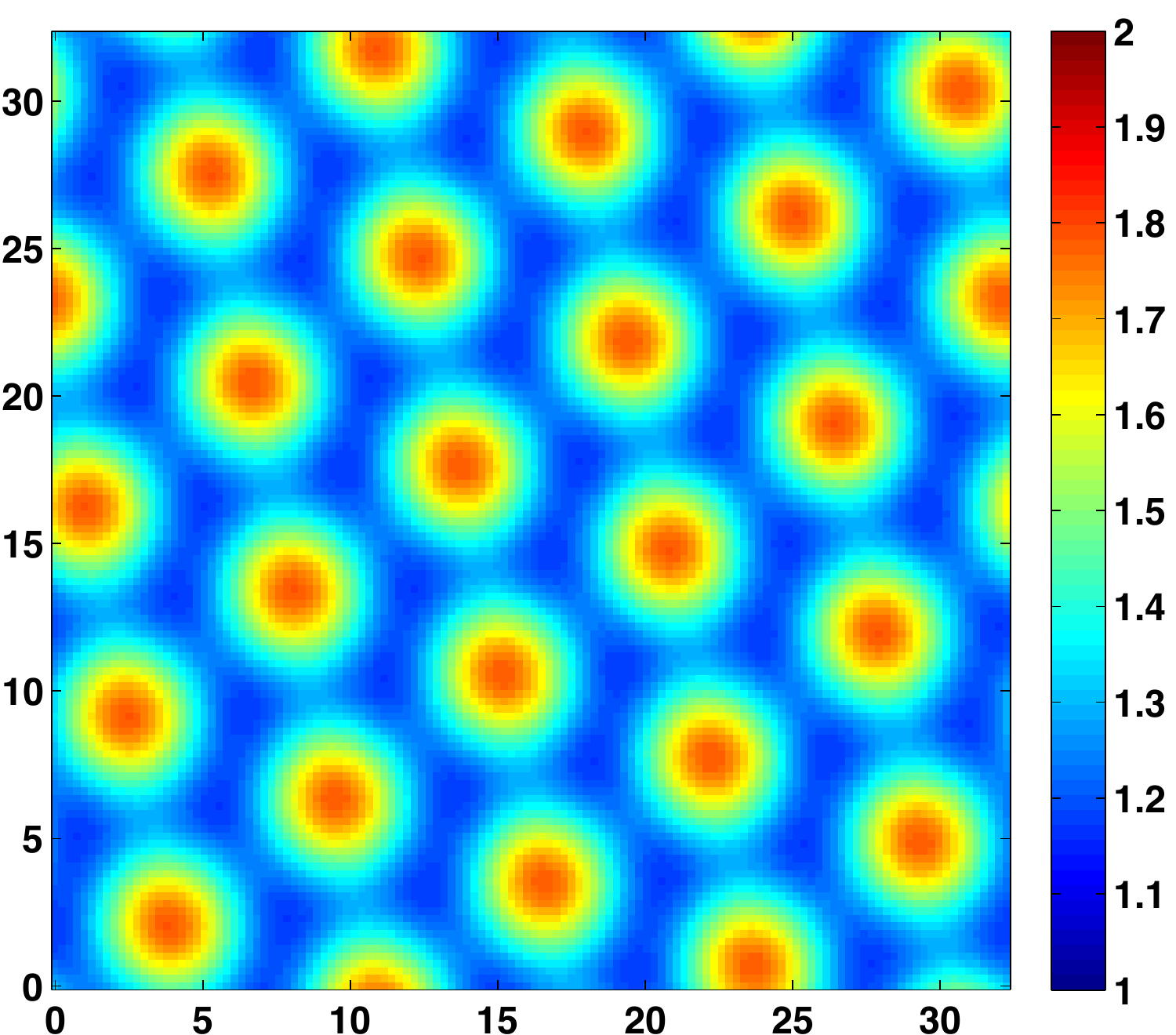}}
\put(0,160){\includegraphics[scale=0.39]{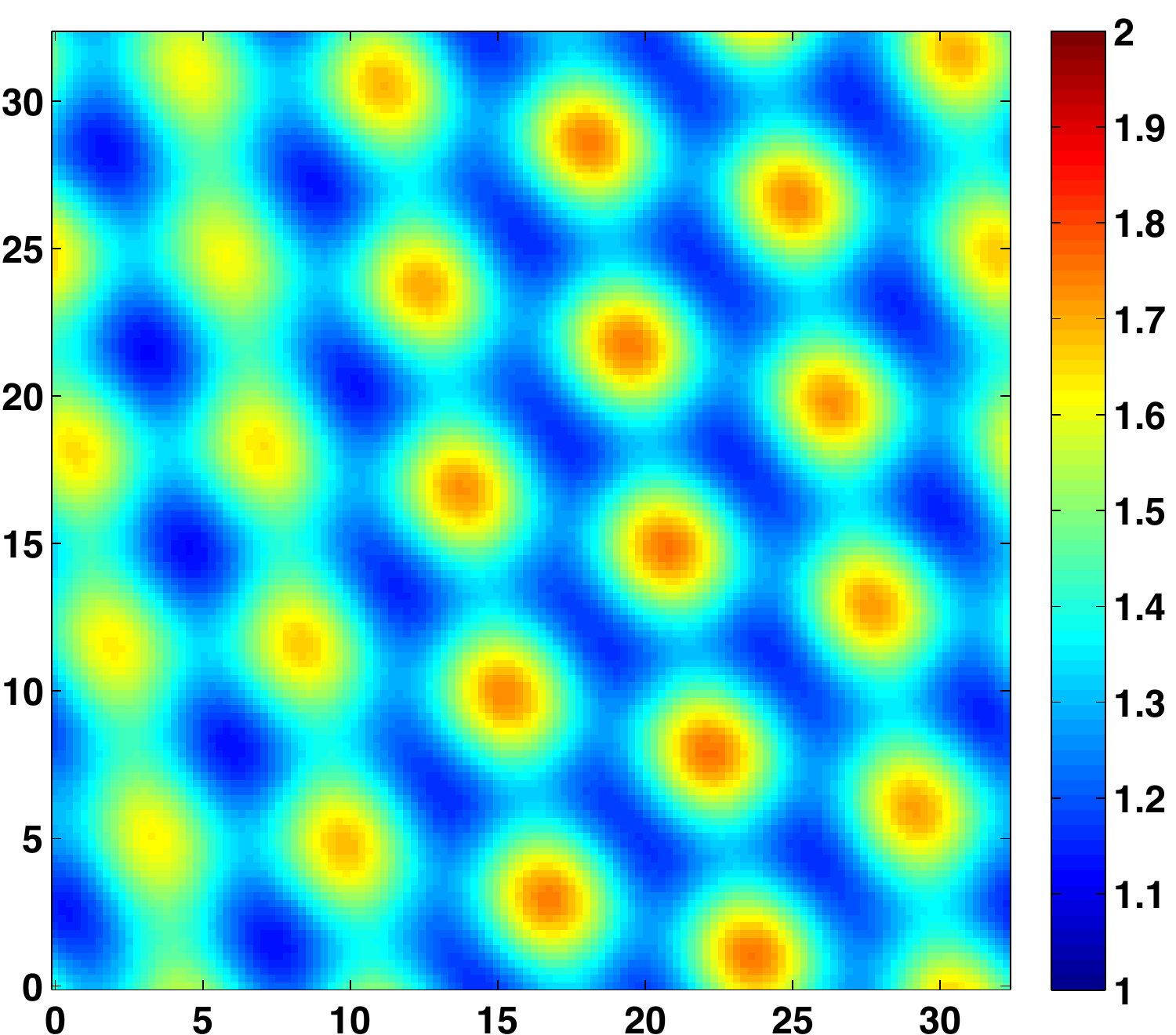}}
\put(0,320){\includegraphics[scale=0.39]{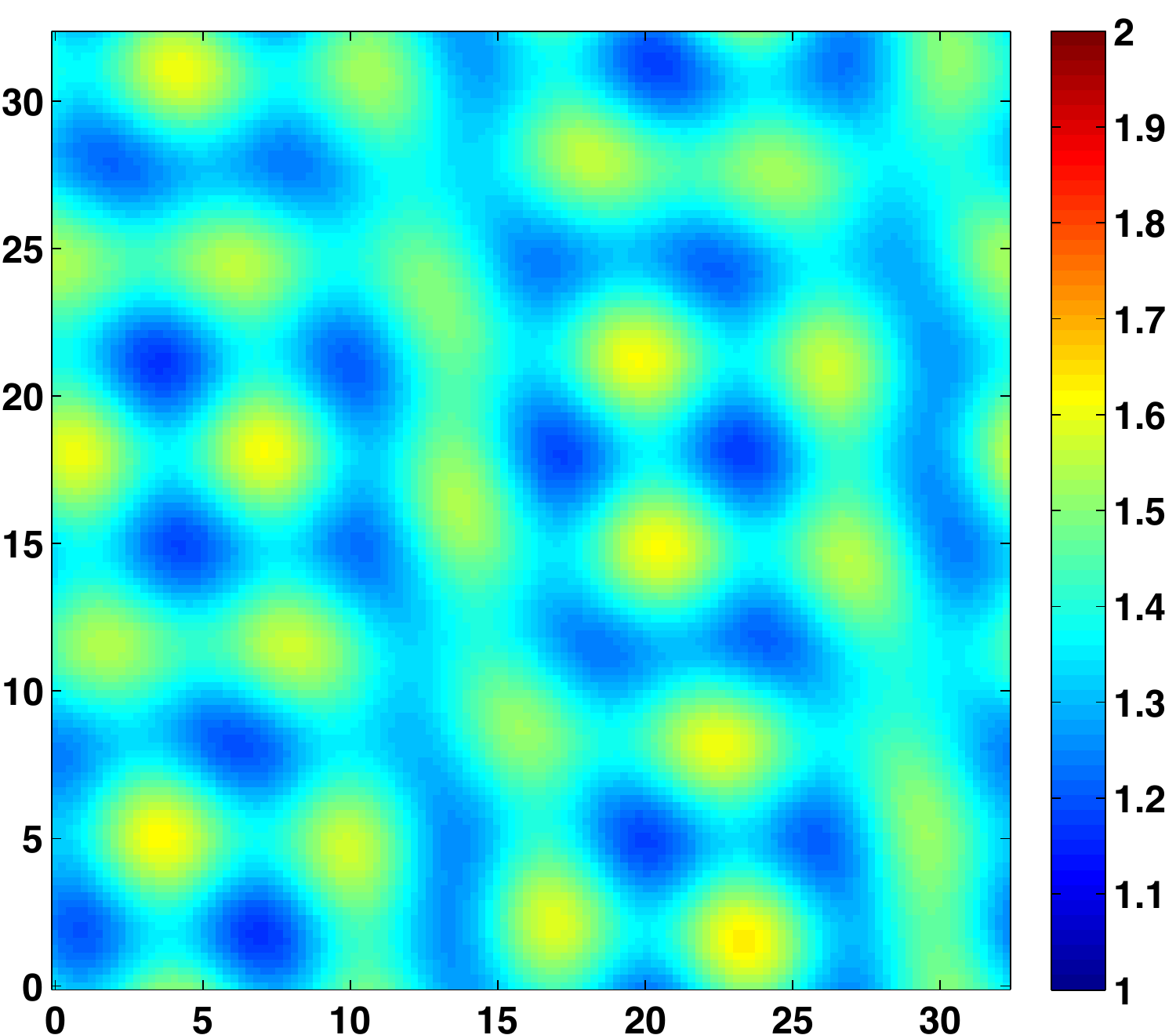}}
\put(170,0){\includegraphics[scale=0.39]{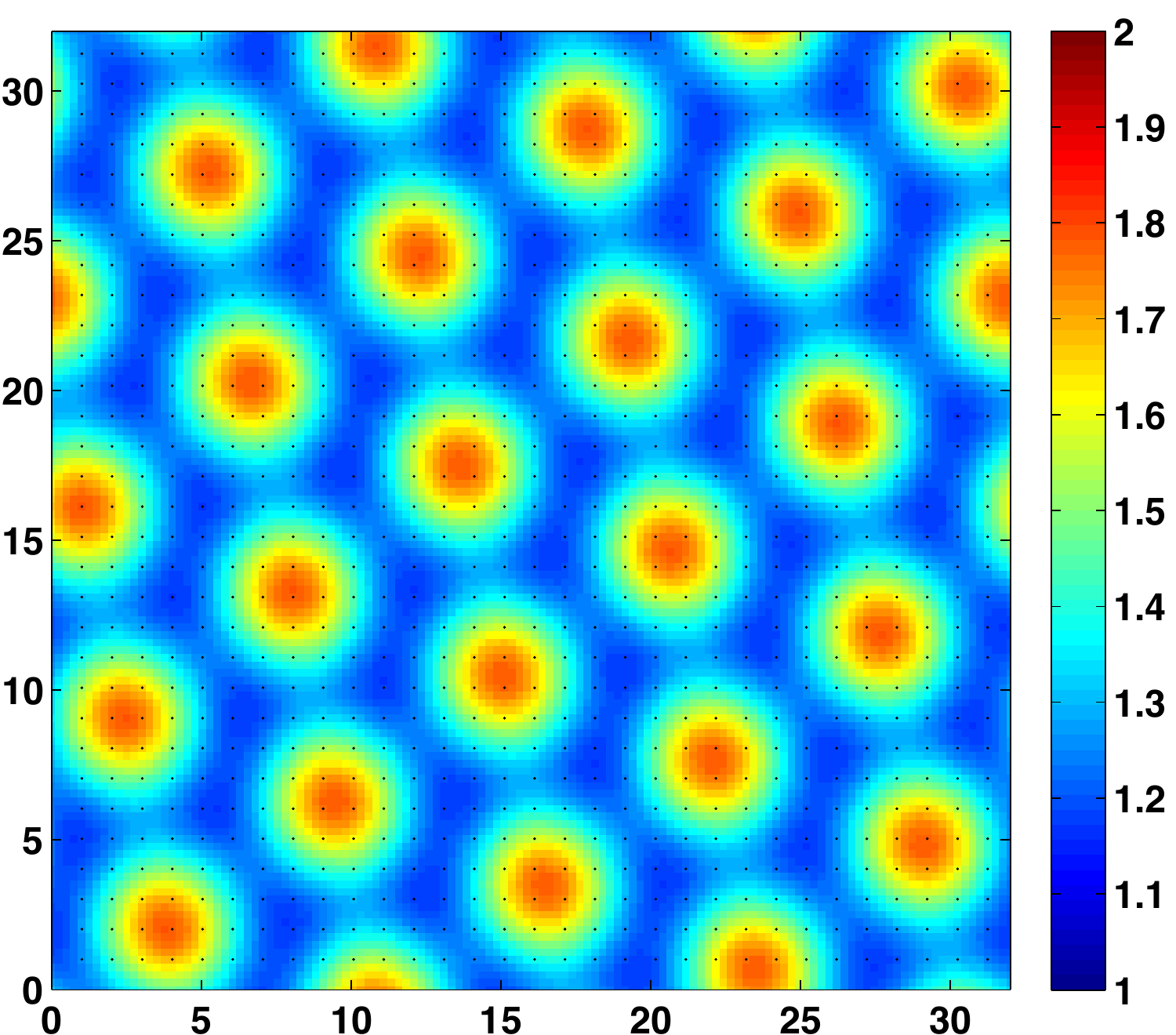}}
\put(170,160){\includegraphics[scale=0.39]{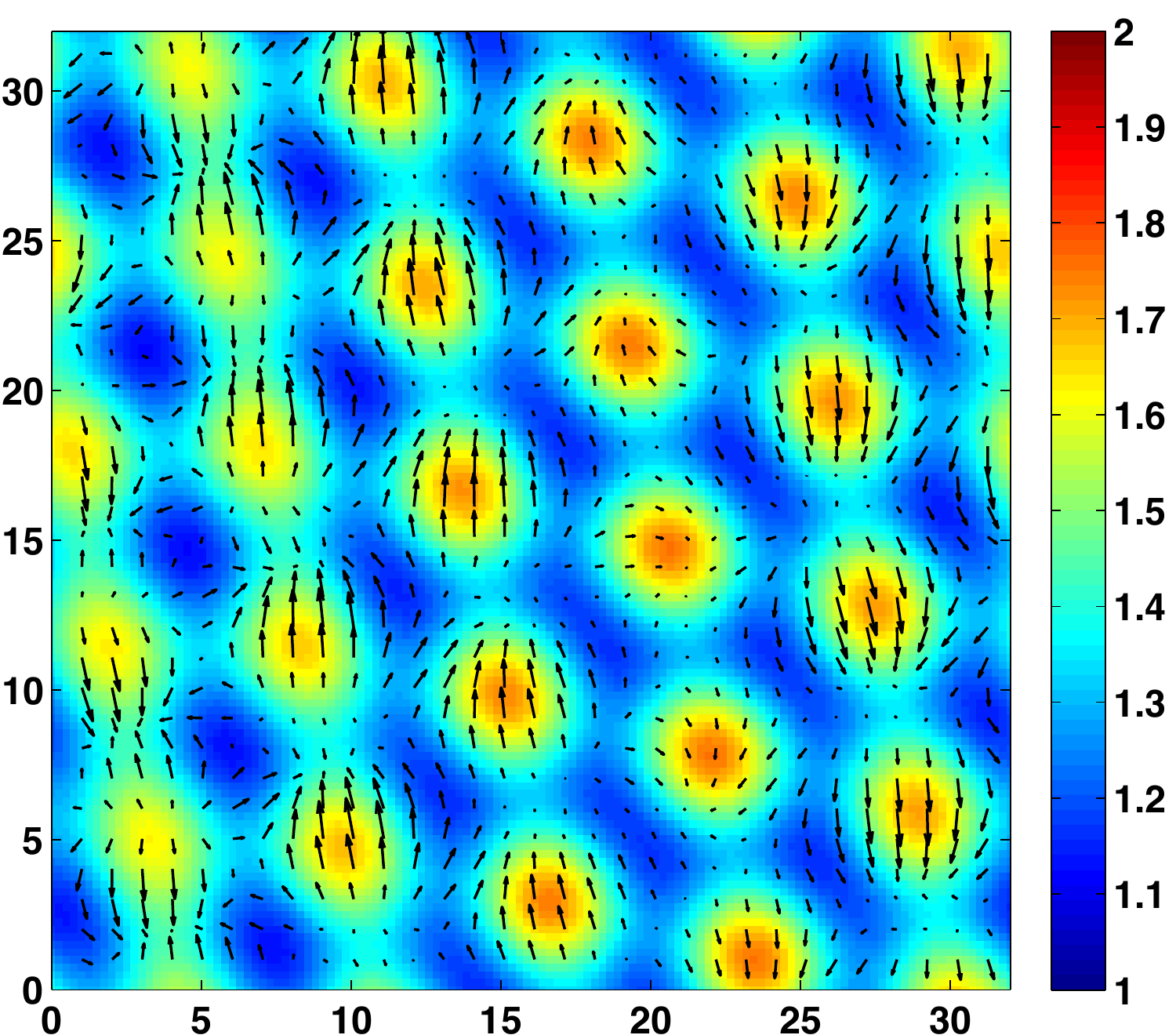}}
\put(170,320){\includegraphics[scale=0.39]{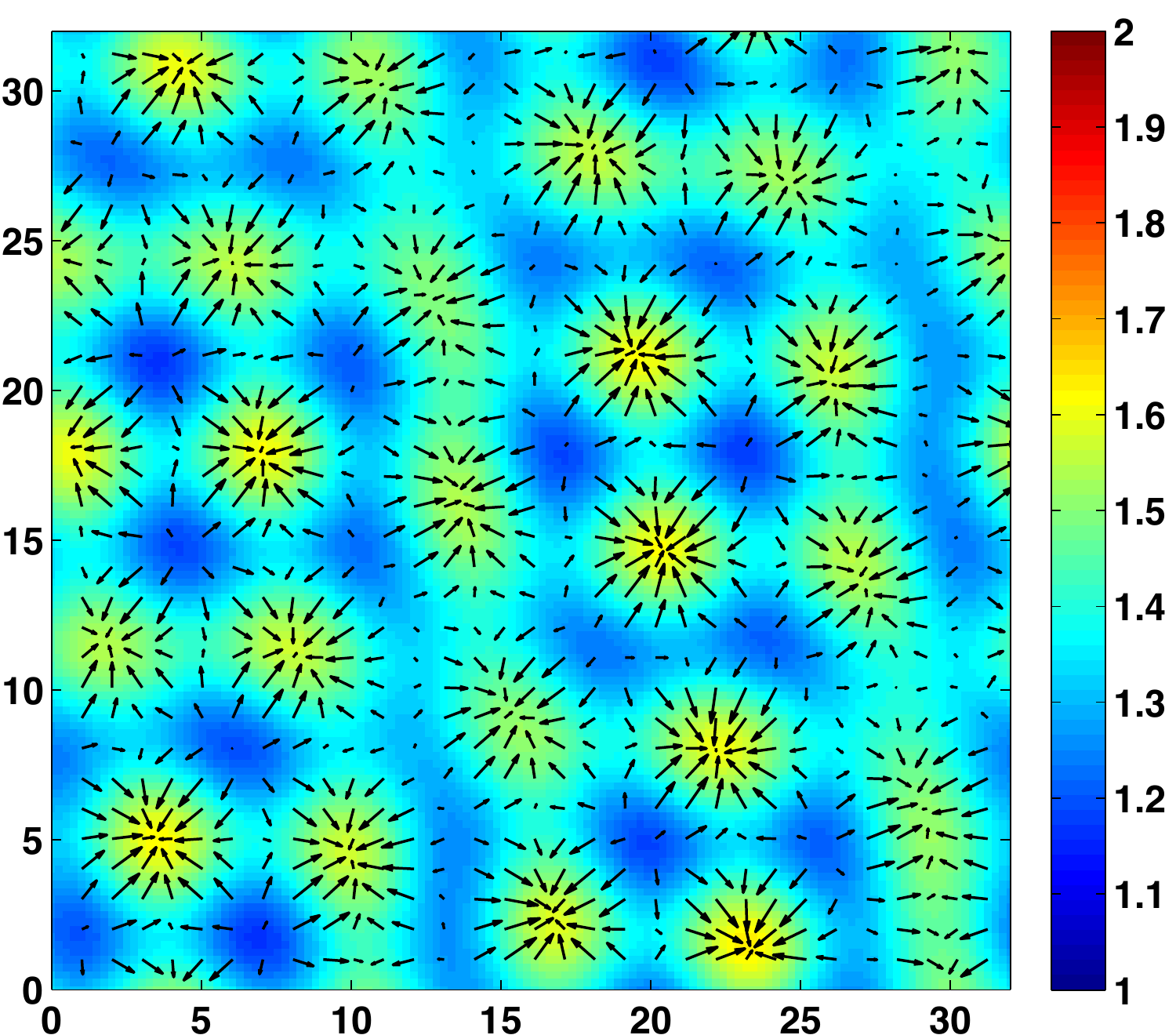}}
\put(270,-10){\text{\bf t =12000.0}}
\put(270,150){\text{\bf t =1600.0}}
\put(270,310){\text{\bf t =1000.0}}
\end{picture}
\end{center}
\caption{The time evolution of the density field. The figure shows the liquid to solid phase transition
in the hydrodynamic PFC model discussed in Section  \ref{freezing}. }
\label{fig_PFC_t2b}
\end{figure}

\begin{figure}[!ht]
\begin{center}
\begin{picture}(270,500)
\put(0,-5){\includegraphics[scale=0.6]{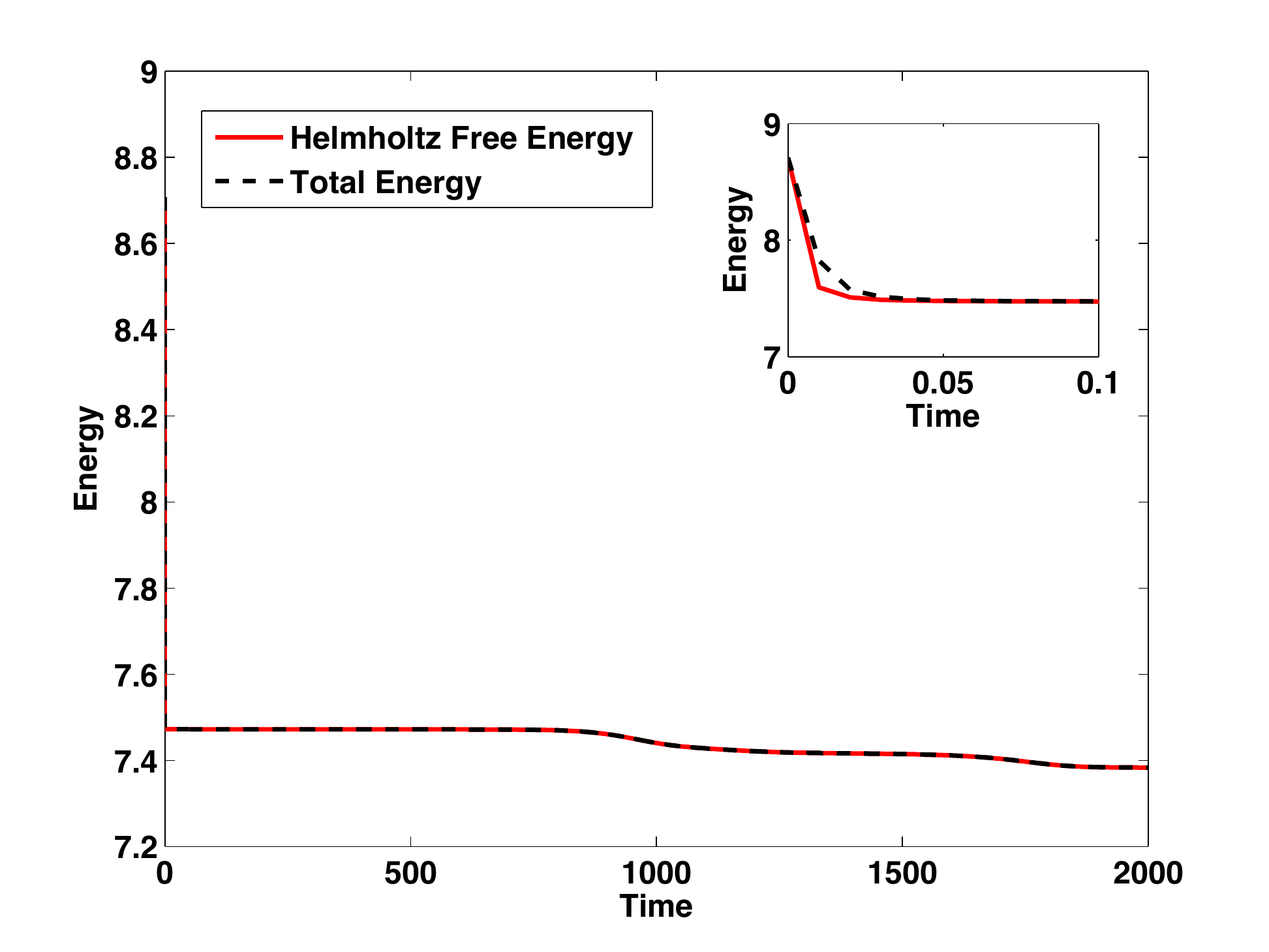}}
\put(0, 240){\includegraphics[scale=0.6]{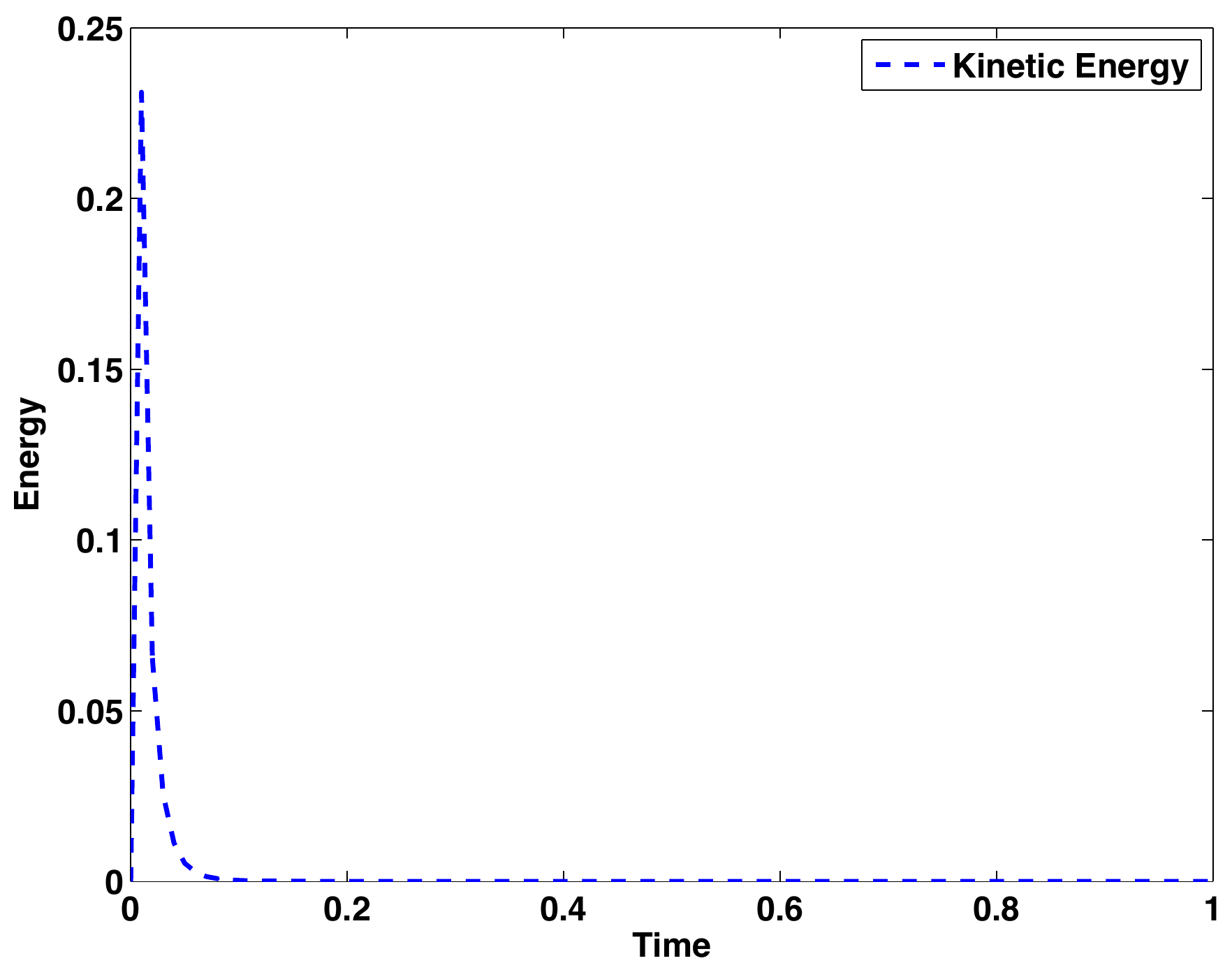}}
\end{picture}
\end{center}
\caption{The evolution of kinetic energy, Helmholtz free energy $\mathcal{F}[\rho]$ and total energy $\mathcal{E}[\rho, \bu]$, as labeled,  for the 
simulation of freezing of the hydrodynamic PFC model described in Section  \ref{freezing} and shown in Figure \ref{fig_PFC_t2b}. }
\label{fig_PFC_t2a}
\end{figure}

\begin{figure}[!ht]
\begin{center}
\begin{picture}(350,140)
\put(-30,5){\includegraphics[scale=0.39]{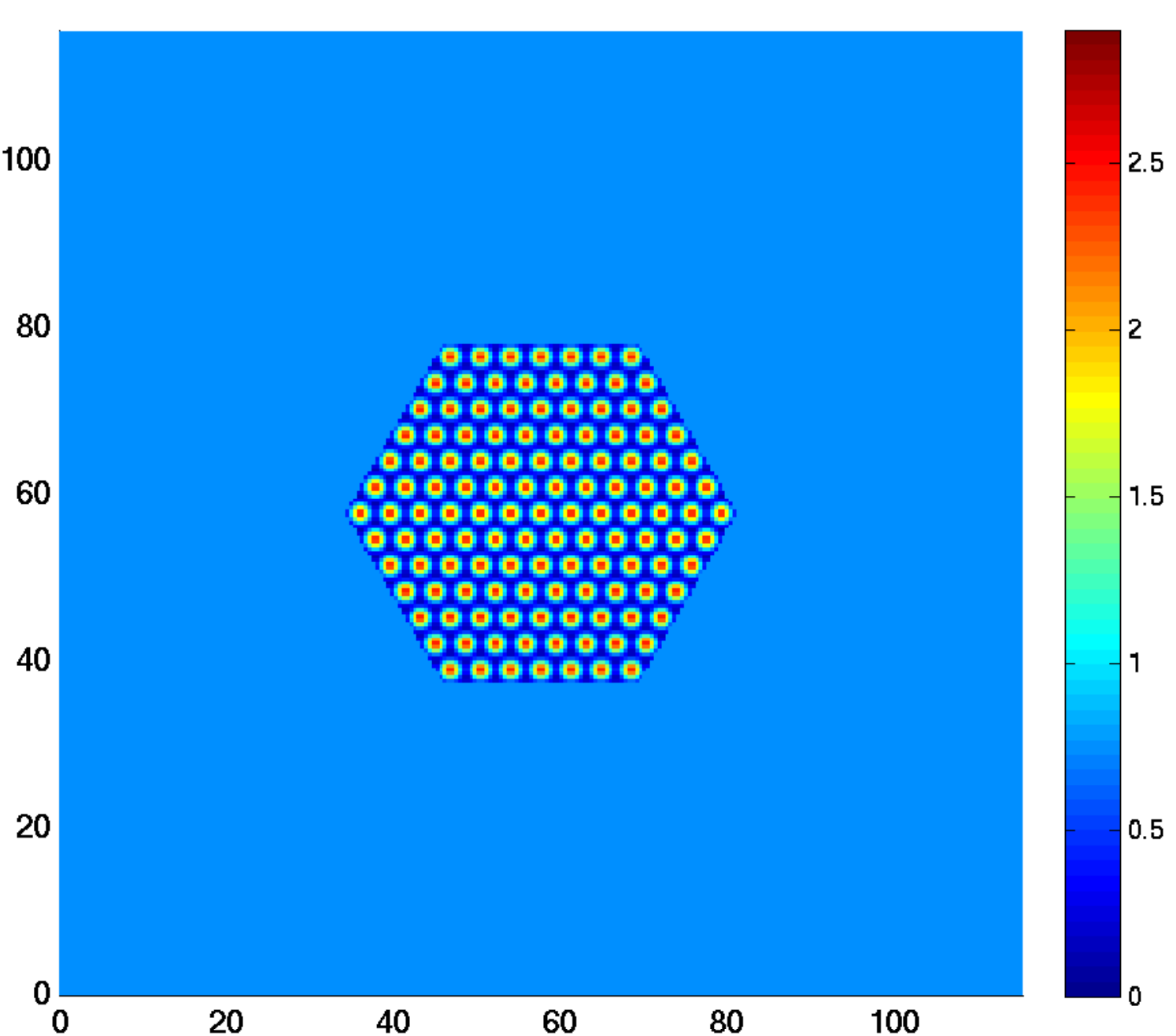}}
\put(175,5){\includegraphics[scale=0.39]{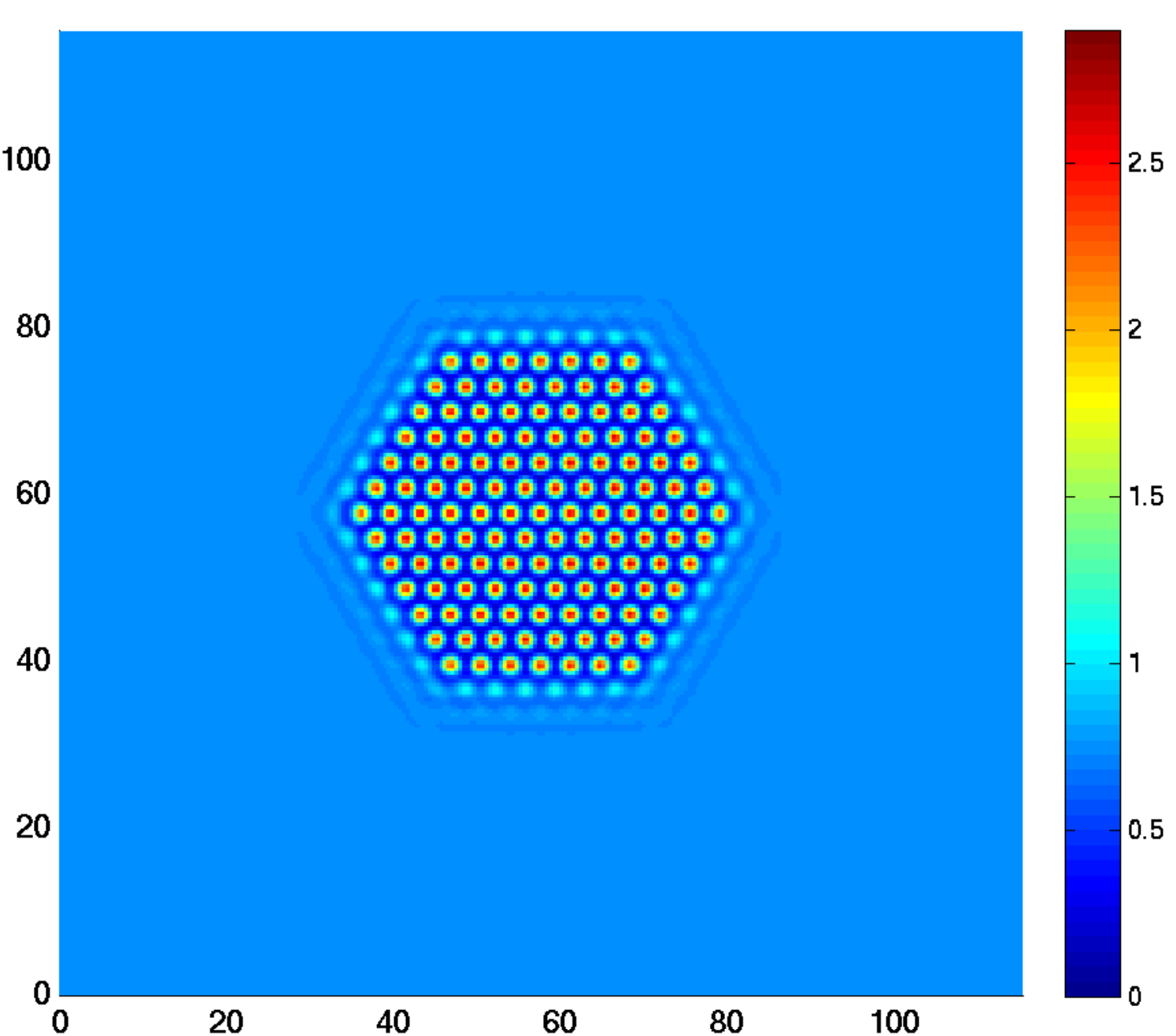}}
\put(0,-5){\text{\bf Initial Density Field (t=0)}}
\put(175,-5){\text{\bf Final Density Field (t=20000)}}
\end{picture}
\end{center}
\caption{The initial and final configuration during the annealing process of a nano-crystal placed in a channel with density of solid and liquid phases 
chosen in the co-existence regime at equilibrium (see Section  \ref{flow_PFC}). }
\label{fig_nucleate}

\end{figure}

\begin{figure}[!ht]
\begin{center}
\begin{picture}(350,490)
\put(0,390){\includegraphics[scale=0.34]{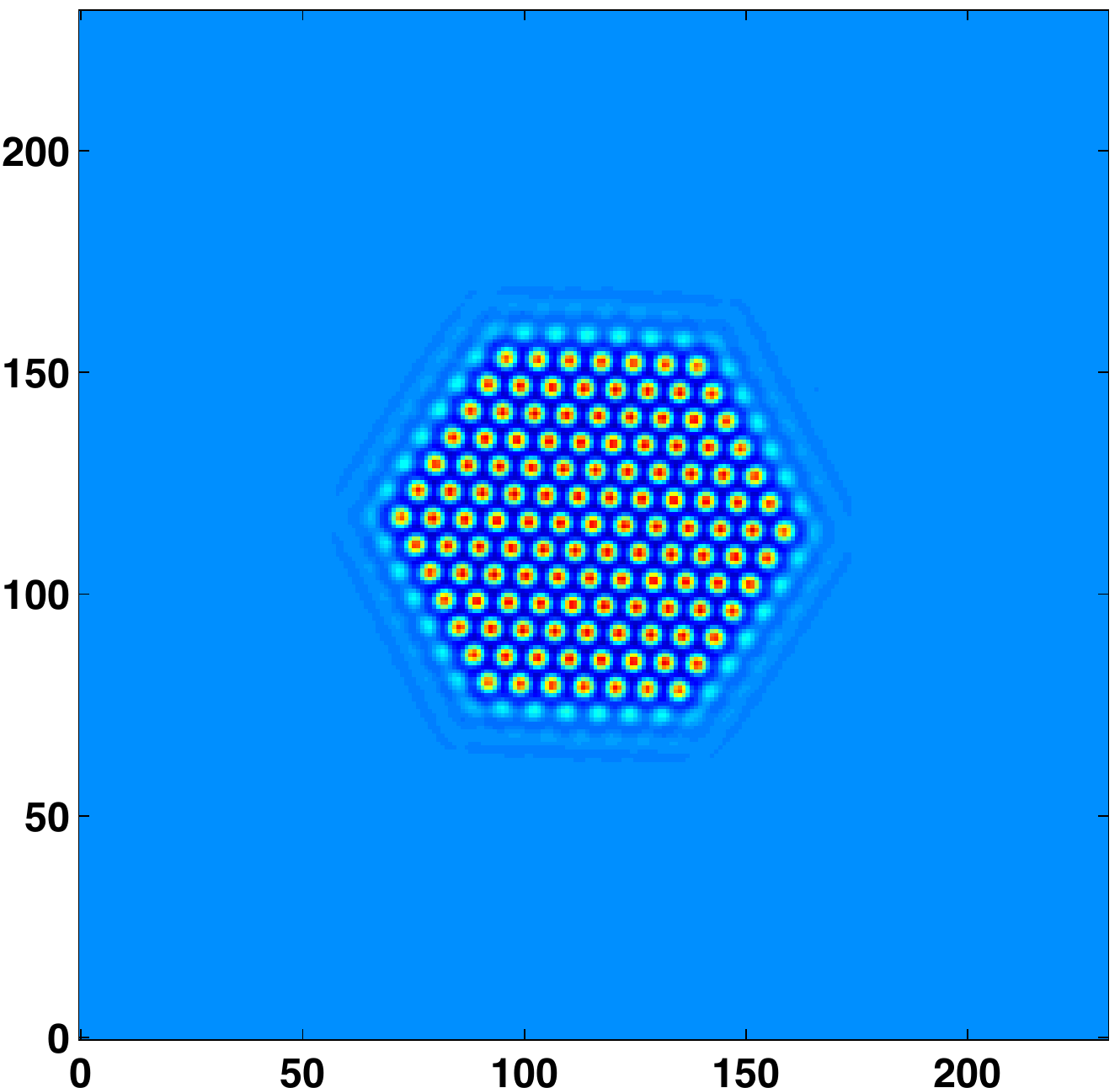}}
\put(130,390){\includegraphics[scale=0.34]{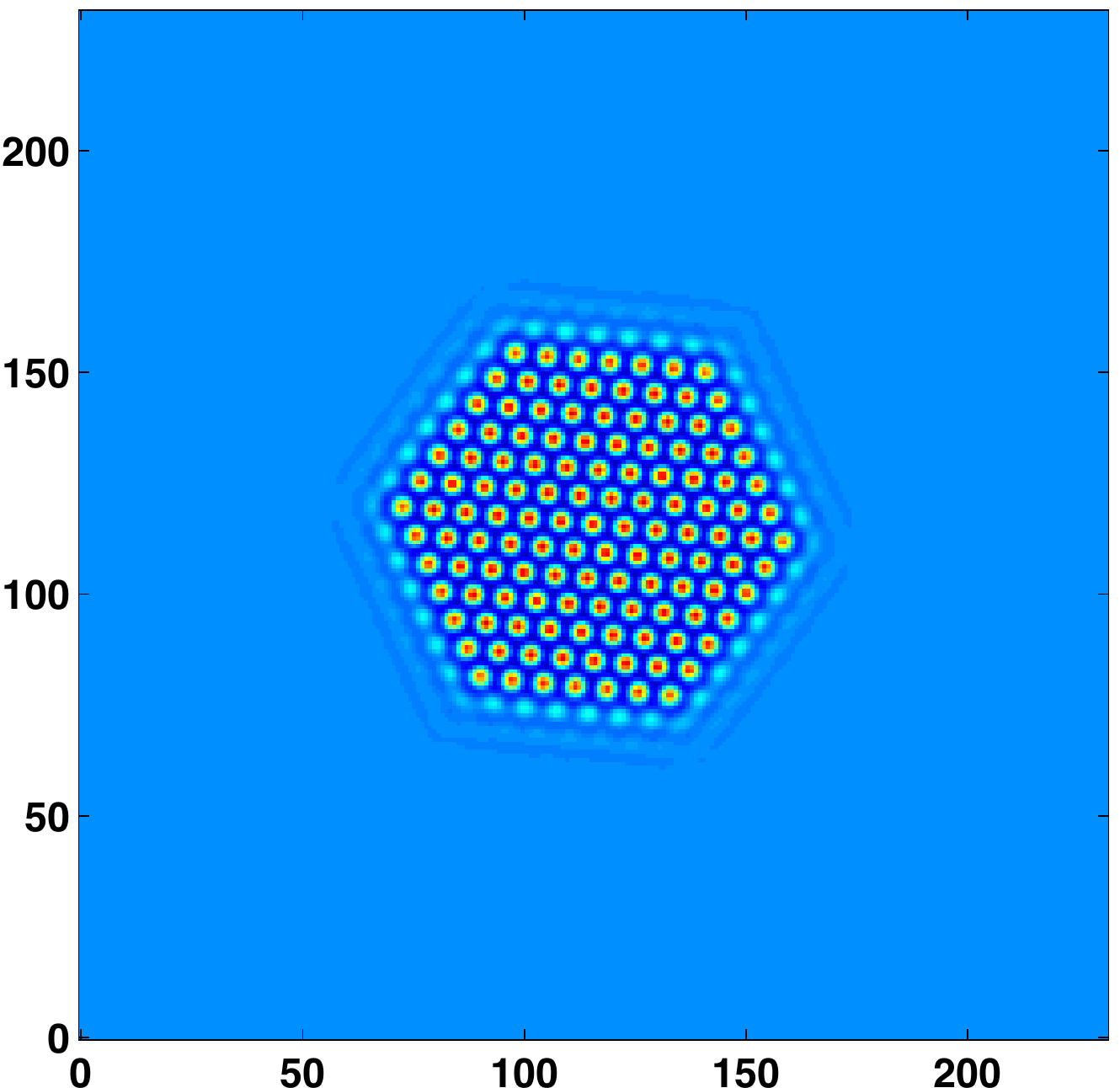}}
\put(260,390){\includegraphics[scale=0.34]{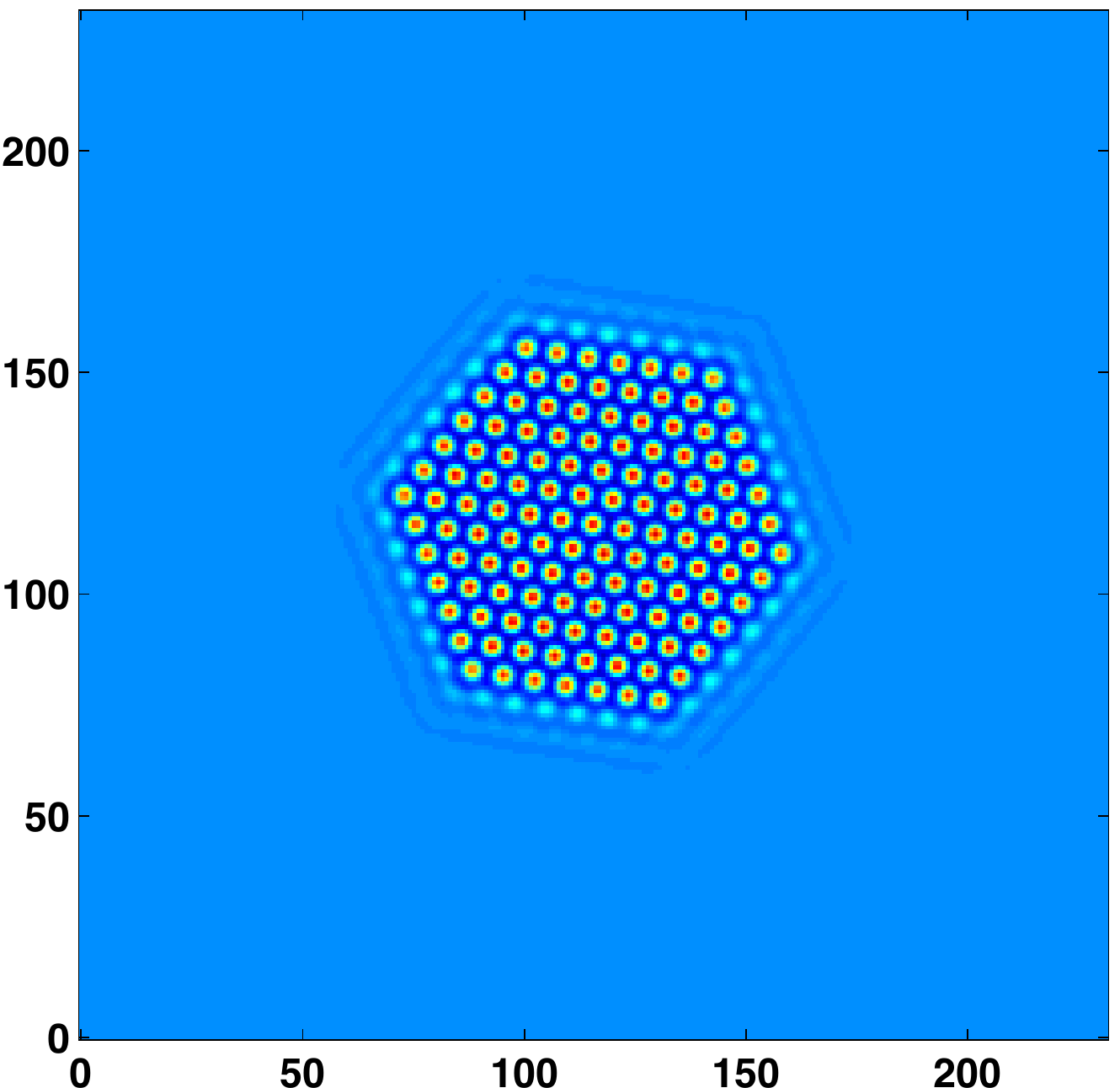}}
\put(0,260){\includegraphics[scale=0.34]{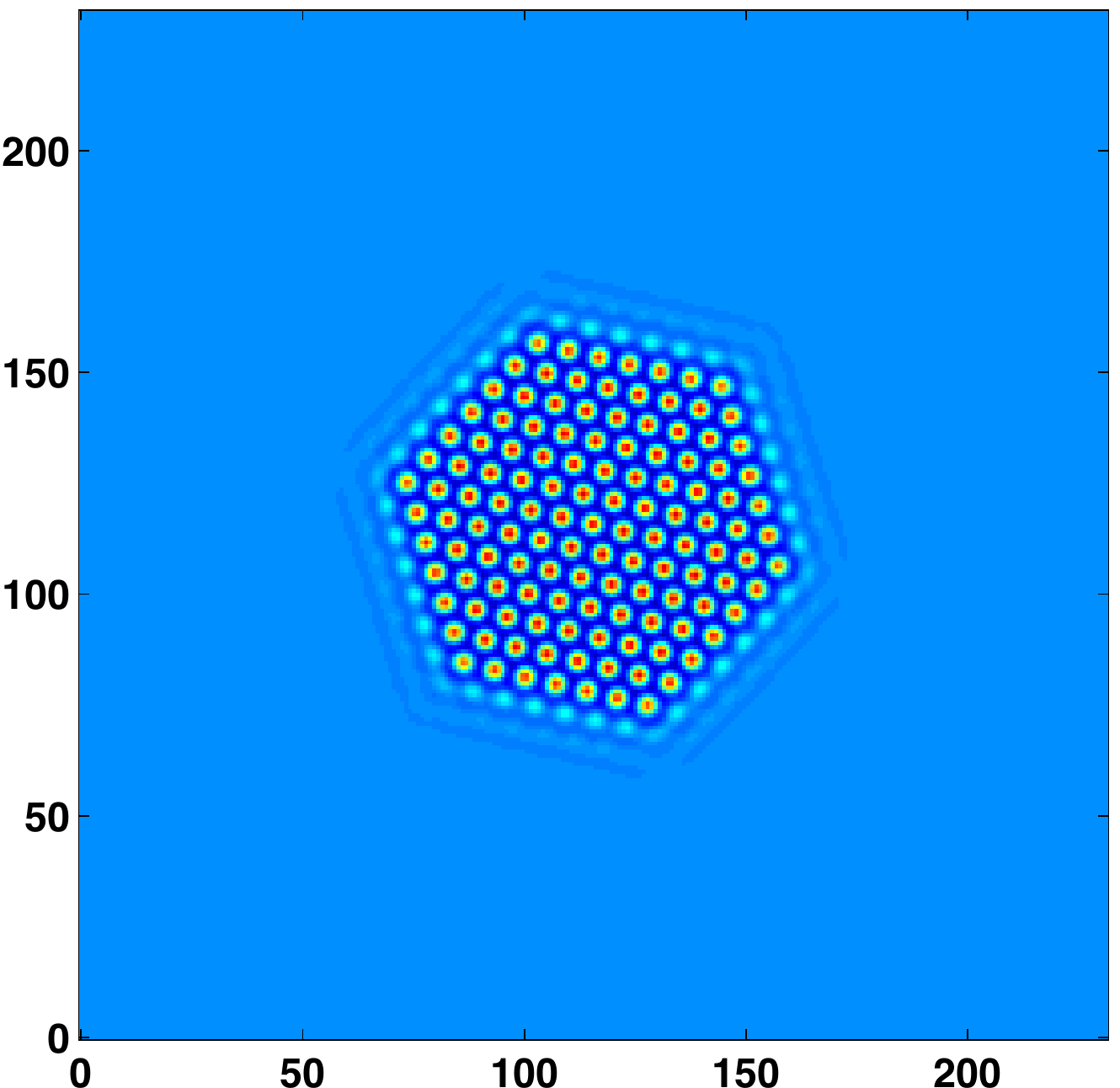}}
\put(130,260){\includegraphics[scale=0.34]{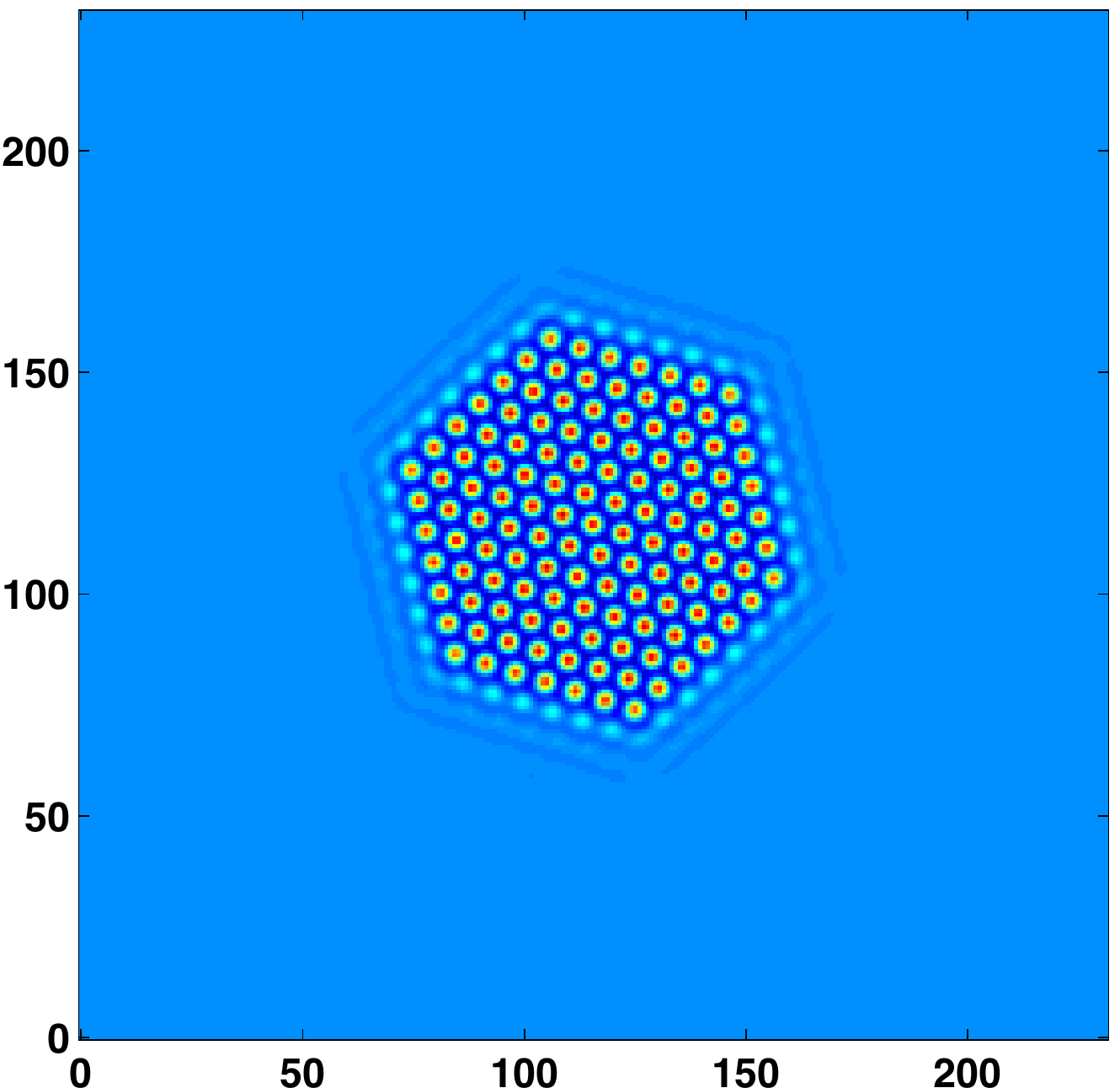}}
\put(260,260){\includegraphics[scale=0.34]{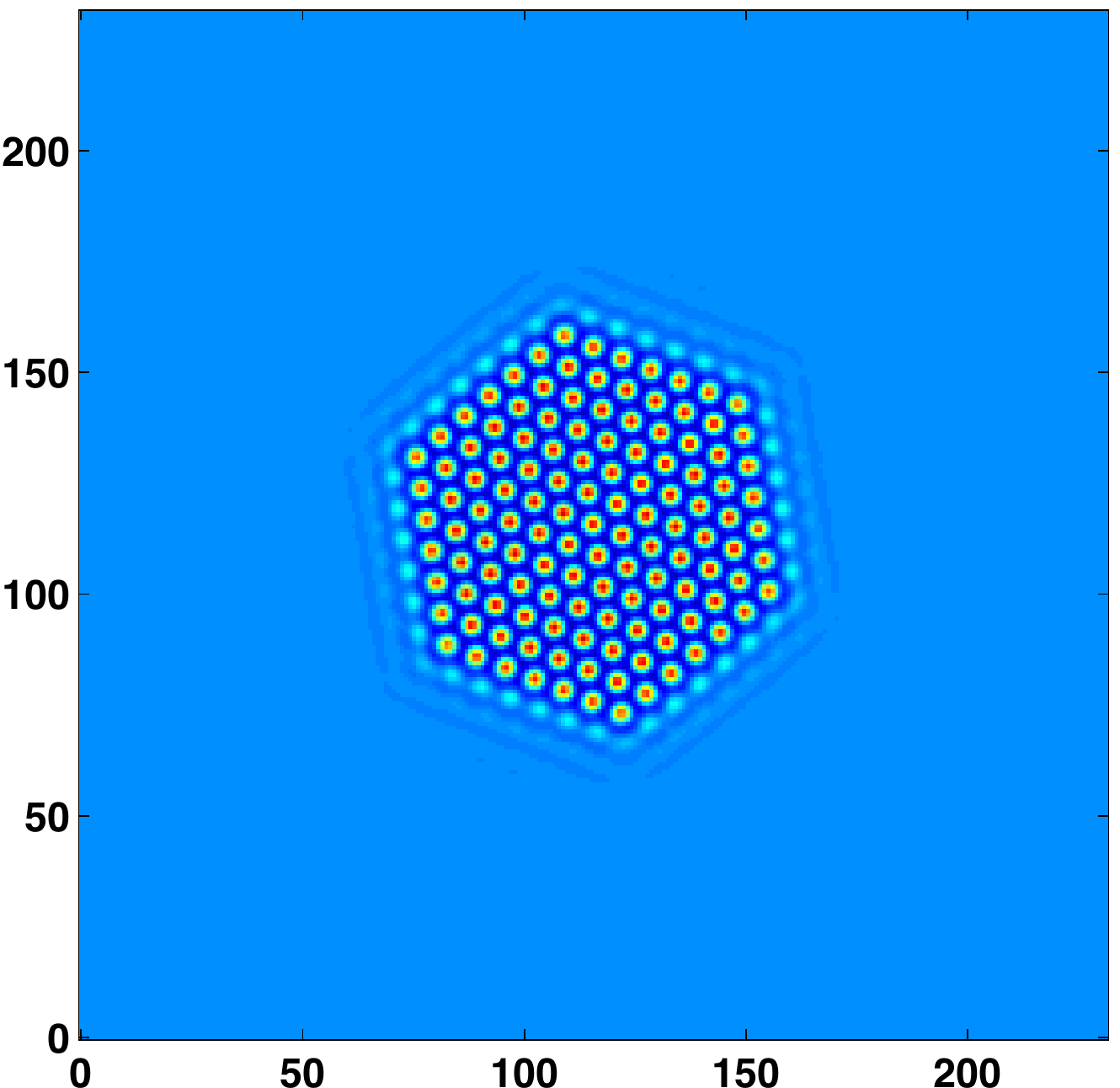}}
\put(0,130){\includegraphics[scale=0.34]{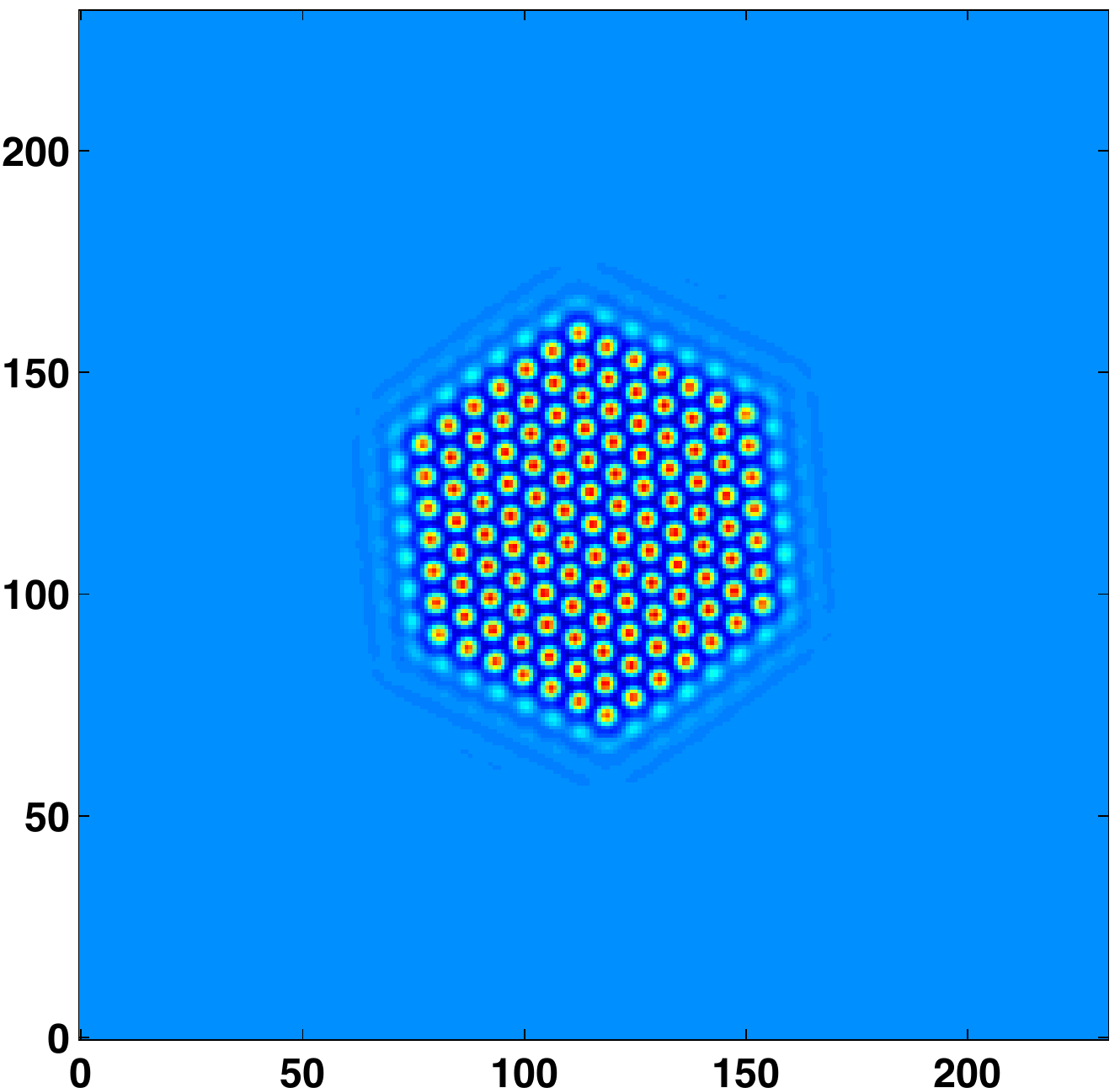}}
\put(130,130){\includegraphics[scale=0.34]{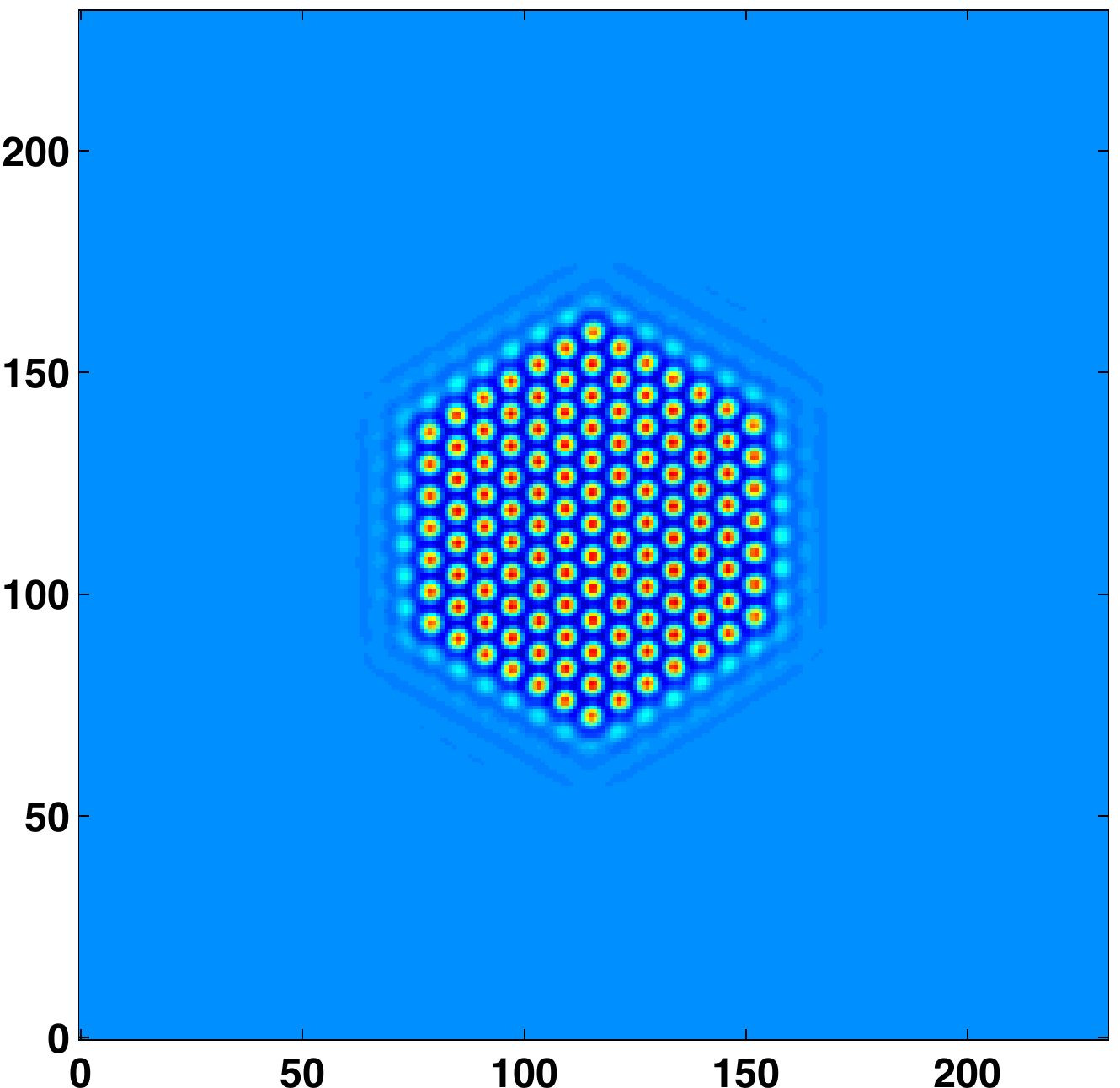}}
\put(260,130){\includegraphics[scale=0.34]{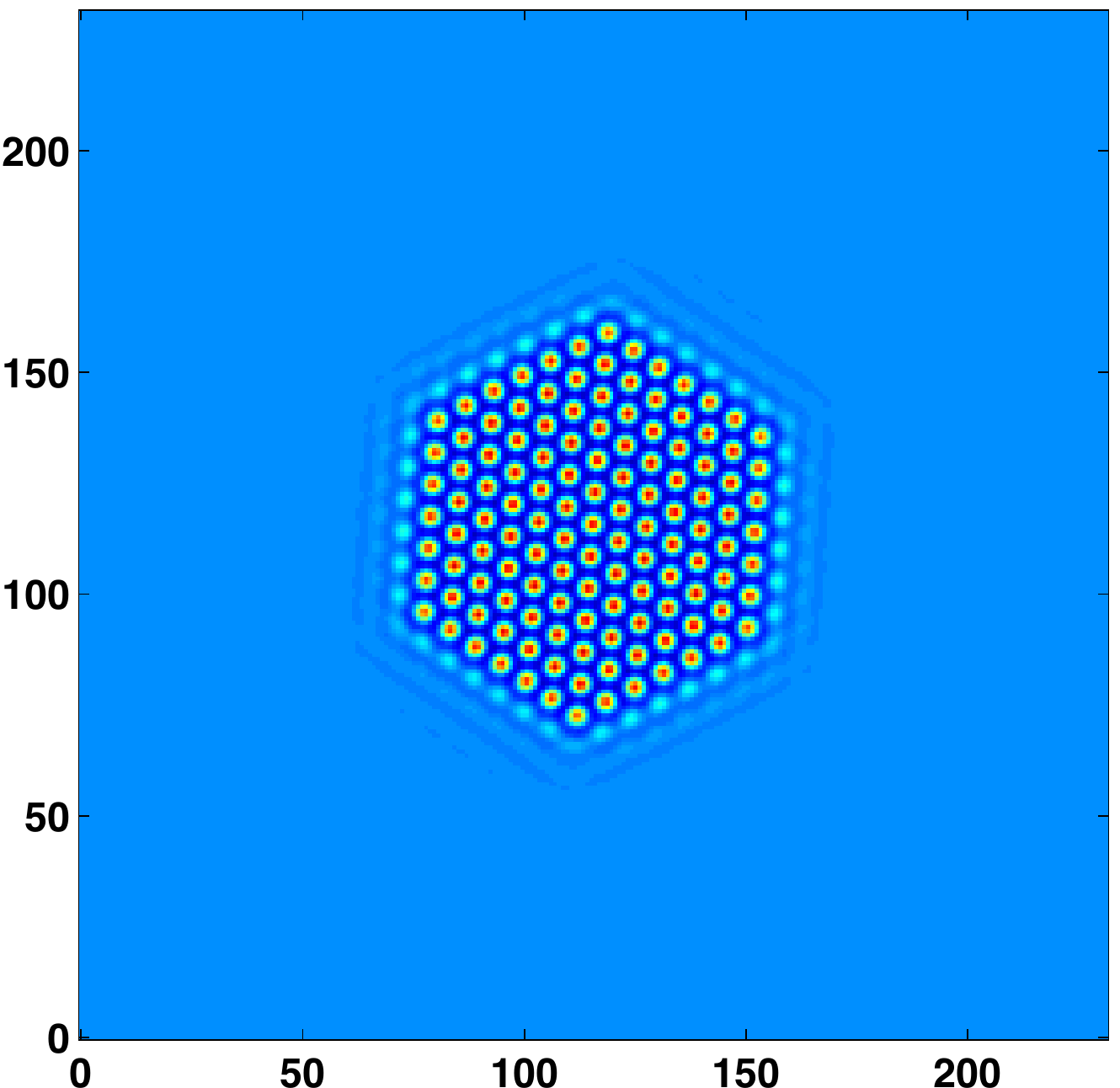}}
\put(0,0){\includegraphics[scale=0.34]{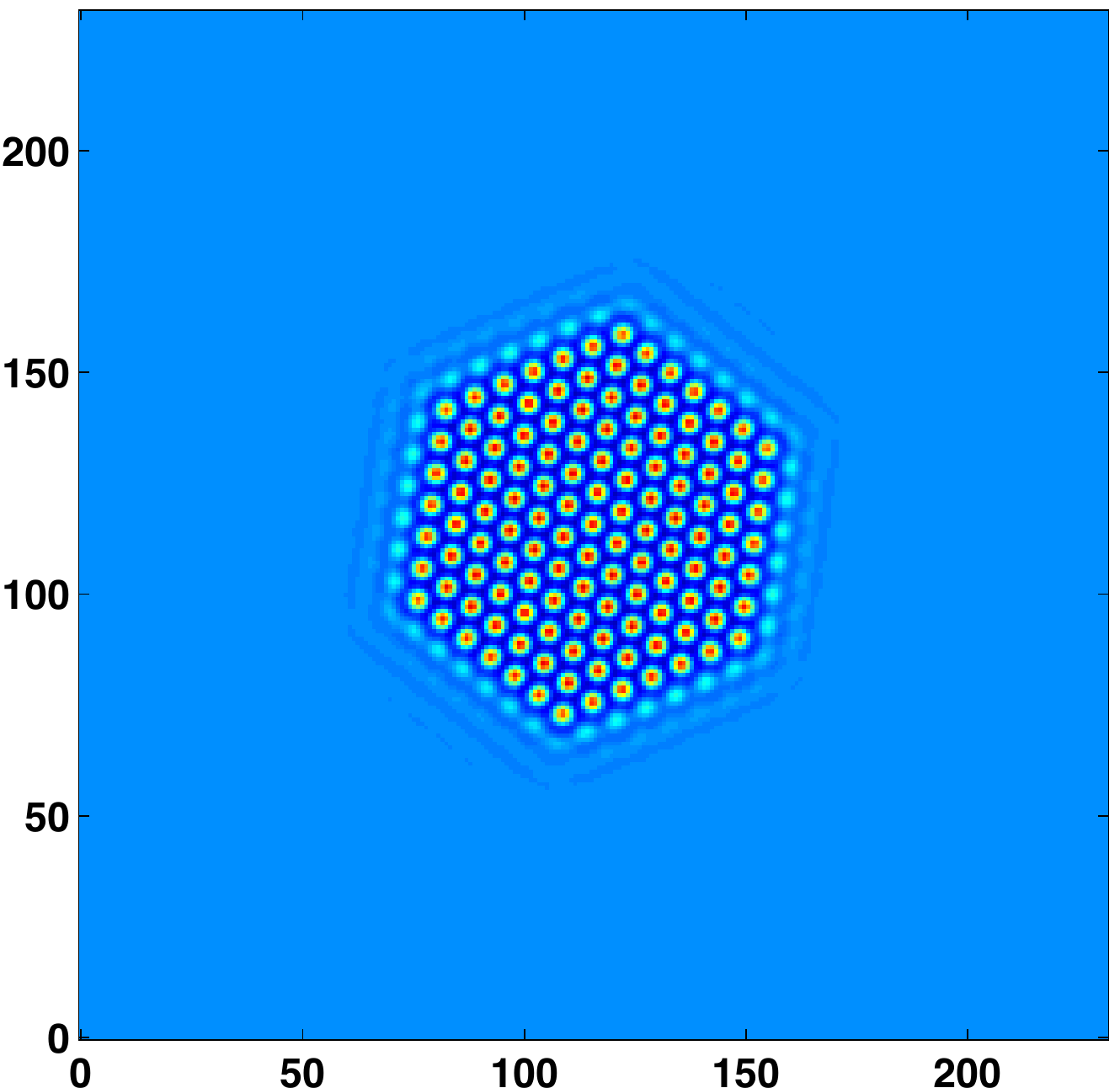}}
\put(130,0){\includegraphics[scale=0.34]{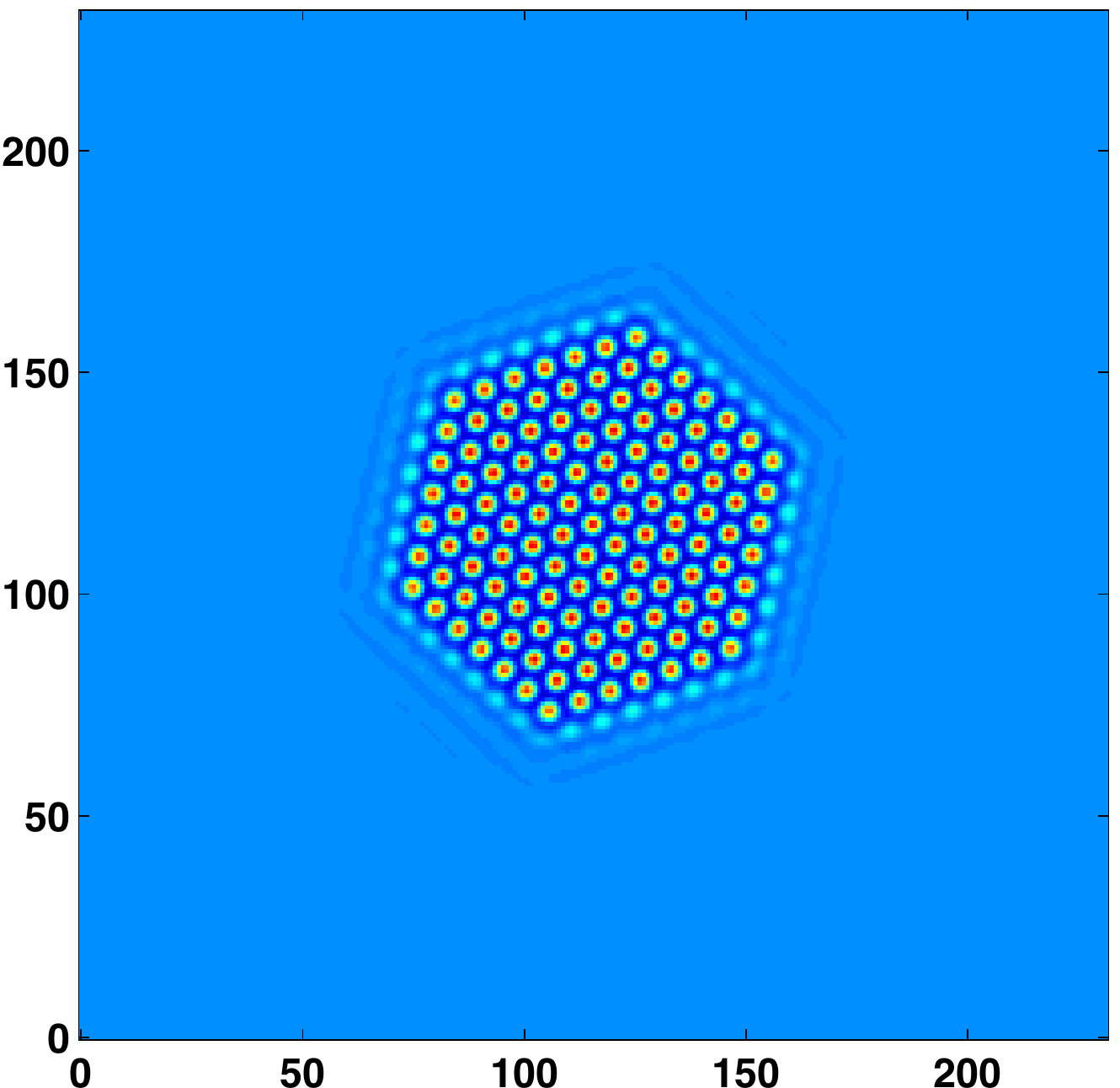}}
\put(260,0){\includegraphics[scale=0.34]{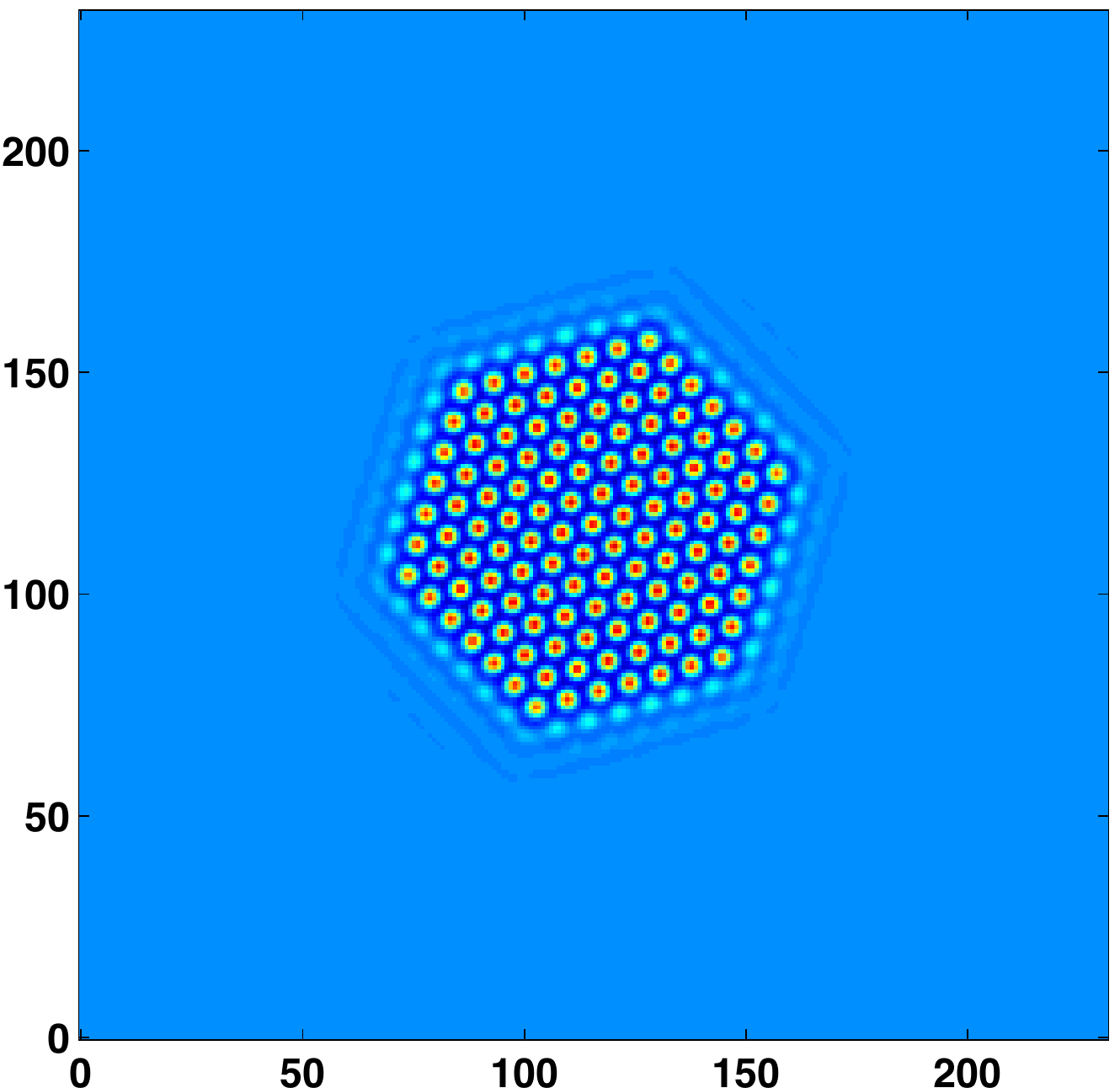}}

\put(15,400){\text{\bf t =800.0}}
\put(145,400){\text{\bf t =1200.0}}
\put(275,400){\text{\bf t =1600.0}}
\put(15,270){\text{\bf t =2000.0}}
\put(145,270){\text{\bf t =2400.0}}
\put(275,270){\text{\bf t =2800.0}}
\put(15,140){\text{\bf t =3200.0}}
\put(145,140){\text{\bf t =3600.0}}
\put(275,140){\text{\bf t =4000.0}}
\put(15,10){\text{\bf t =4400.0}}
\put(145,10){\text{\bf t =48000.0}}
\put(275,10){\text{\bf t =5200.0}}

\end{picture}
\end{center}
\caption{Snapshots of the density field at different times are shown as a nanocrystal is sheared in a driven channel with wall speed $u_{wall,n/0} = \pm 0.1$
as described in Section \ref{flow_PFC}.
The initial conditions correspond to a system at equilibrium with $\bu =0$ and $u_{wall,0/n} =0 $ (Fig. \ref{fig_nucleate}).
The figure shows that the crustal tumbles in the channel while holding its shape fixed. 
The colors (available on-line) correspond to a linear RGB scheme from 0 to 2.9 as shown in Figure \ref{fig_nucleate}.}
\label{fig_shear1}
\end{figure}

\begin{figure}[!ht]
\begin{center}
\begin{picture}(350,140)
\put(0,5){\includegraphics[scale=0.34]{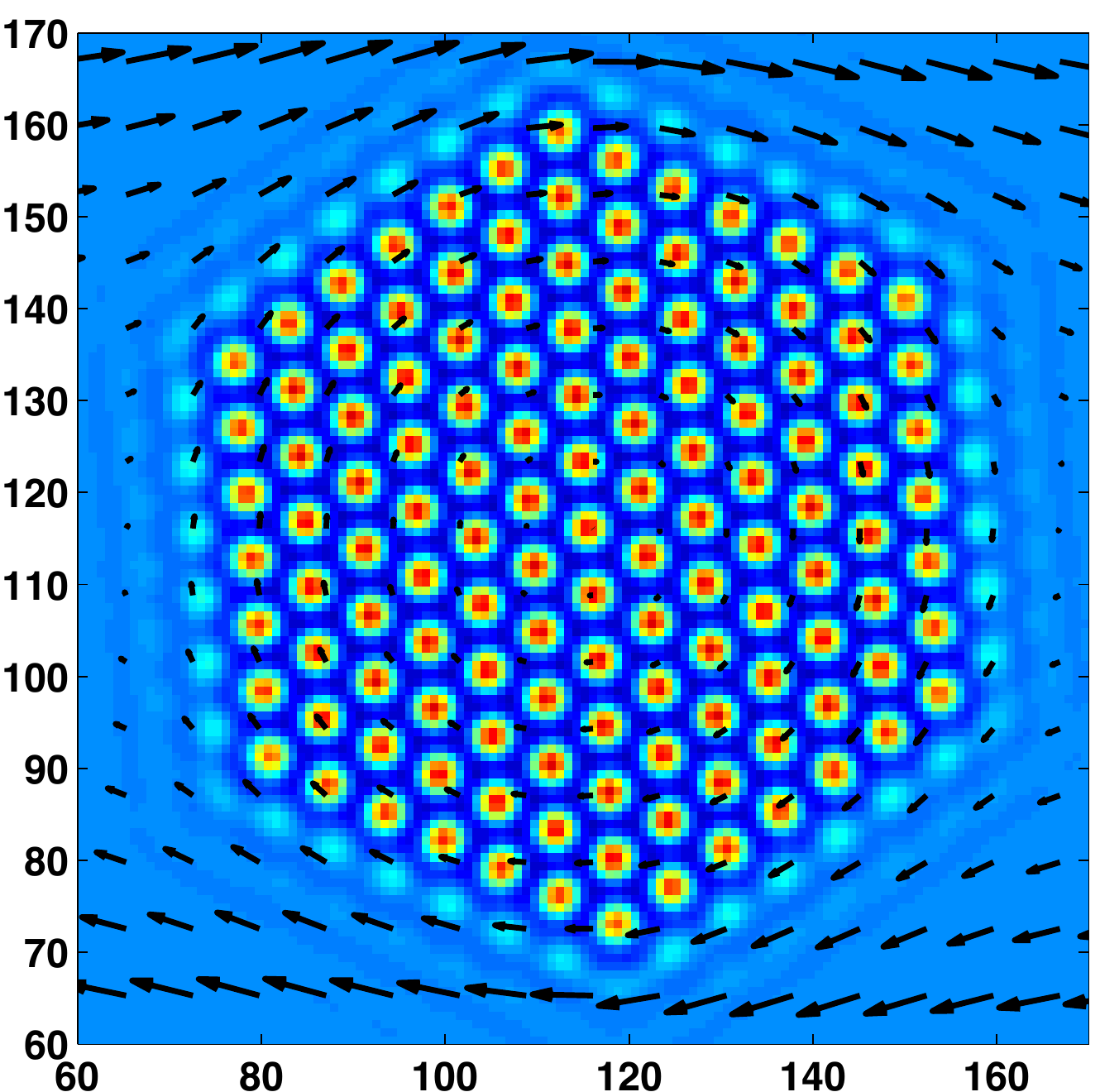}}
\put(135,5){\includegraphics[scale=0.34]{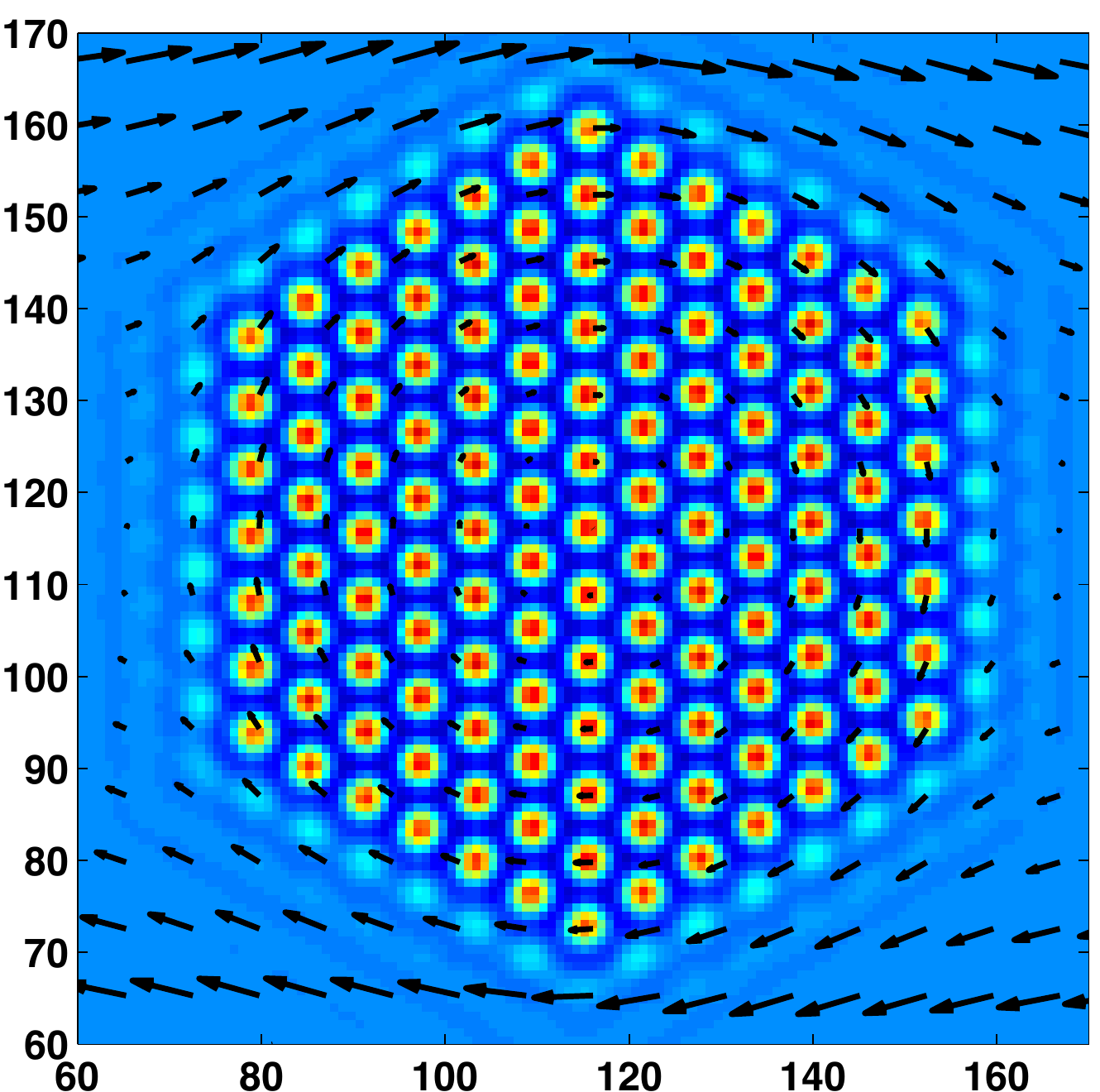}}
\put(270,5){\includegraphics[scale=0.34]{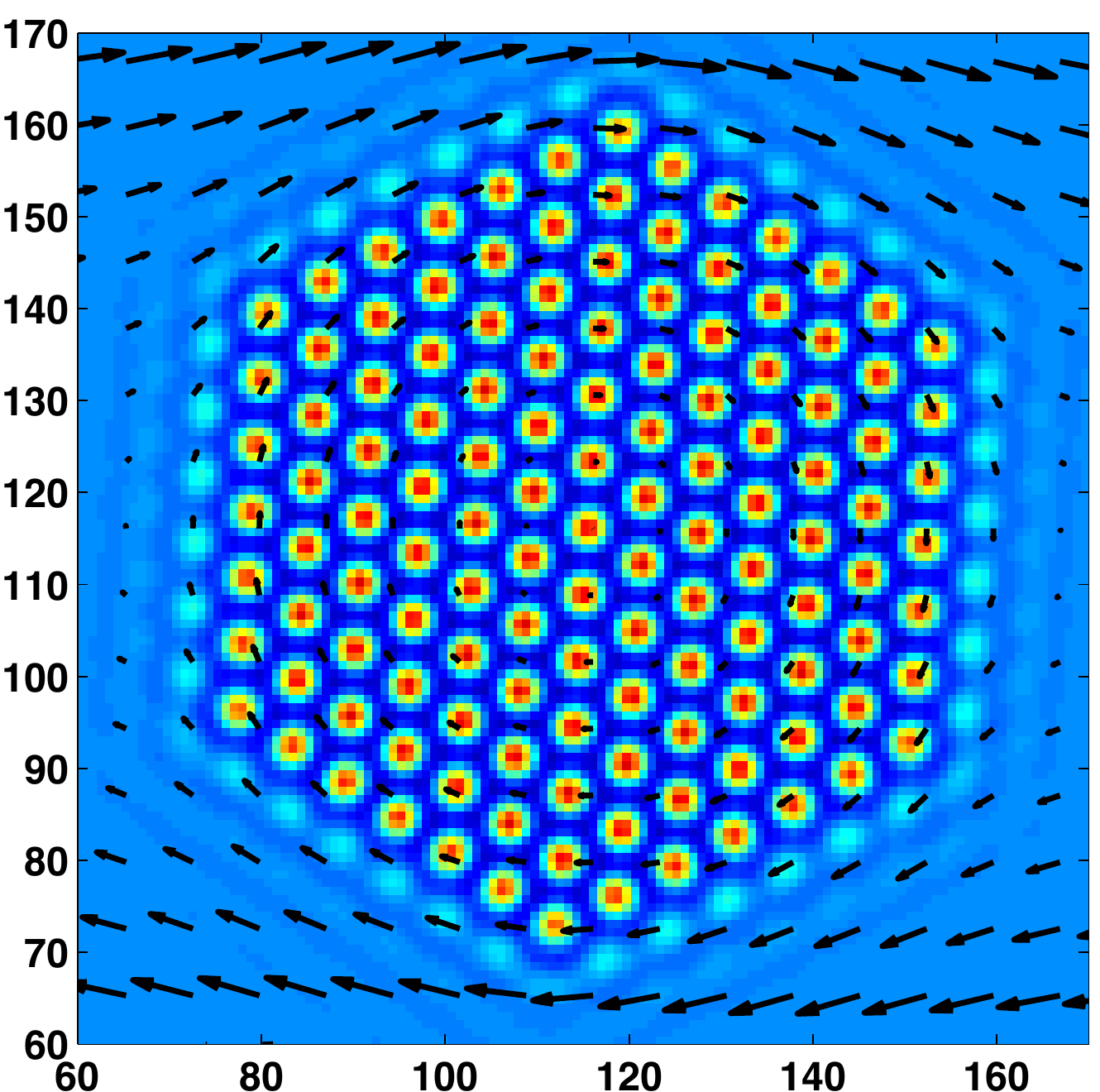}}
\put(10,-5){\text{\bf t =3200.0}}
\put(145,-5){\text{\bf t =3600.0}}
\put(280,-5){\text{\bf t =4000.0}}
\end{picture}
\end{center}
\caption{The flow field superimposed on the density field over a zoomed-in region of the domain at times $t=3200.0$, $3600.0$ and $t=4000$ from the 
simulation presented in Figure \ref{fig_shear1}.
 The figure shows the velocity field does not correspond to that of a plane Couette flow but changes to accommodate 
the tumbling nanocrystal. The colors (available on-line) correspond to a linear RGB scheme from 0 to 2.9 as shown in Figure \ref{fig_nucleate}. }
\label{fig_shear1_v}
\end{figure}

\begin{figure}[!ht]
\begin{center}
\begin{picture}(350,490)
\put(0,390){\includegraphics[scale=0.34]{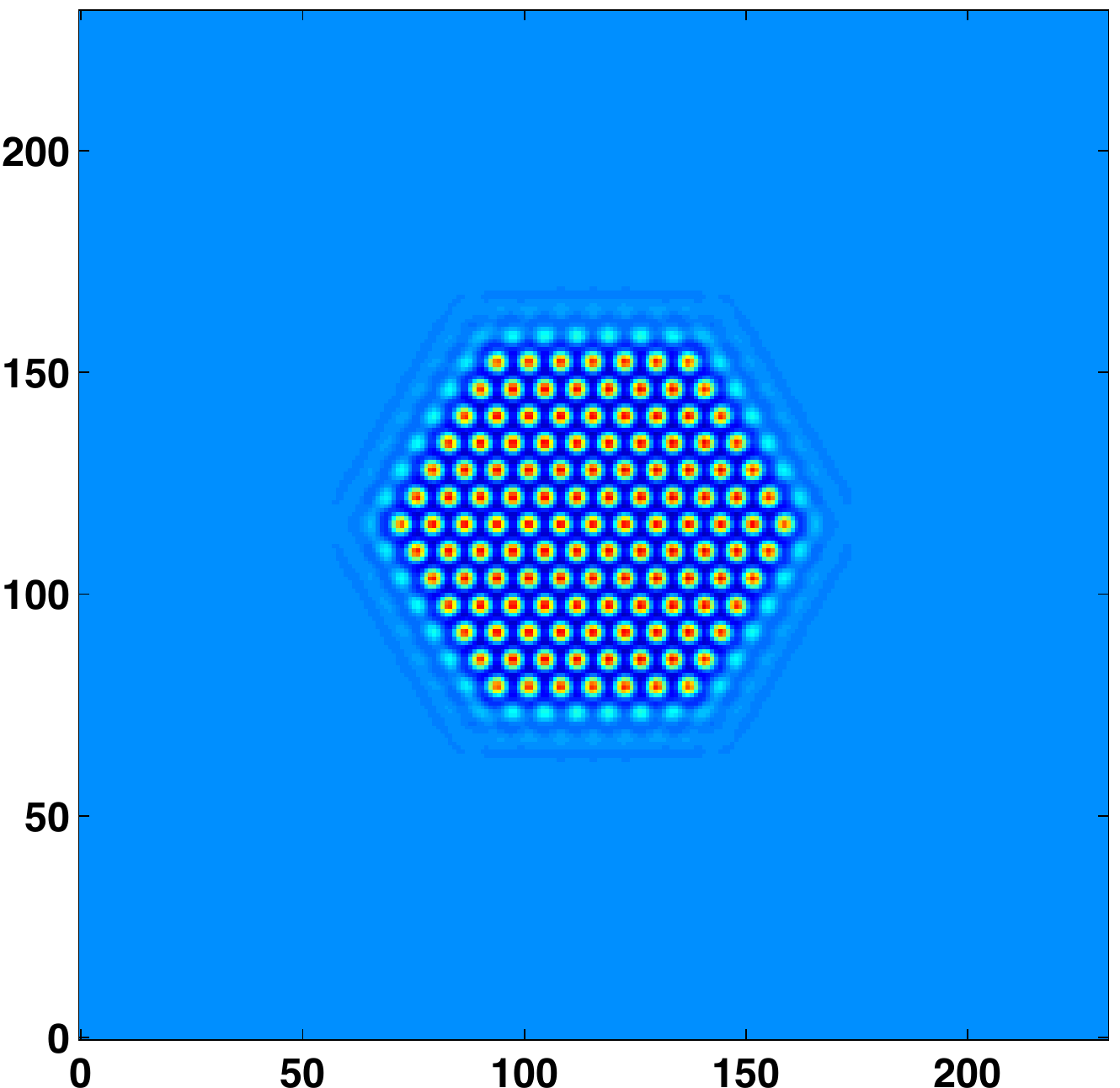}}
\put(130,390){\includegraphics[scale=0.34]{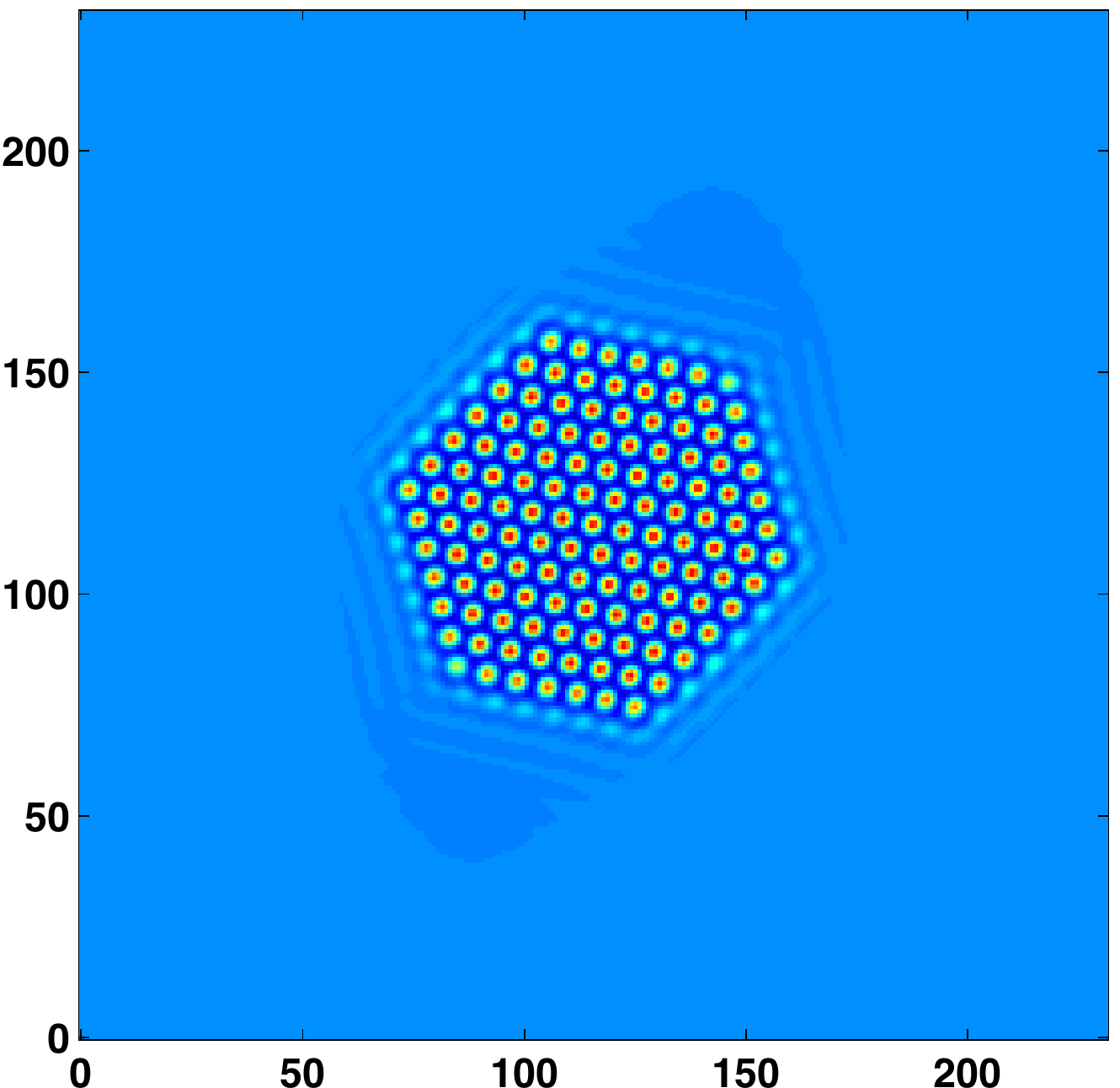}}
\put(260,390){\includegraphics[scale=0.34]{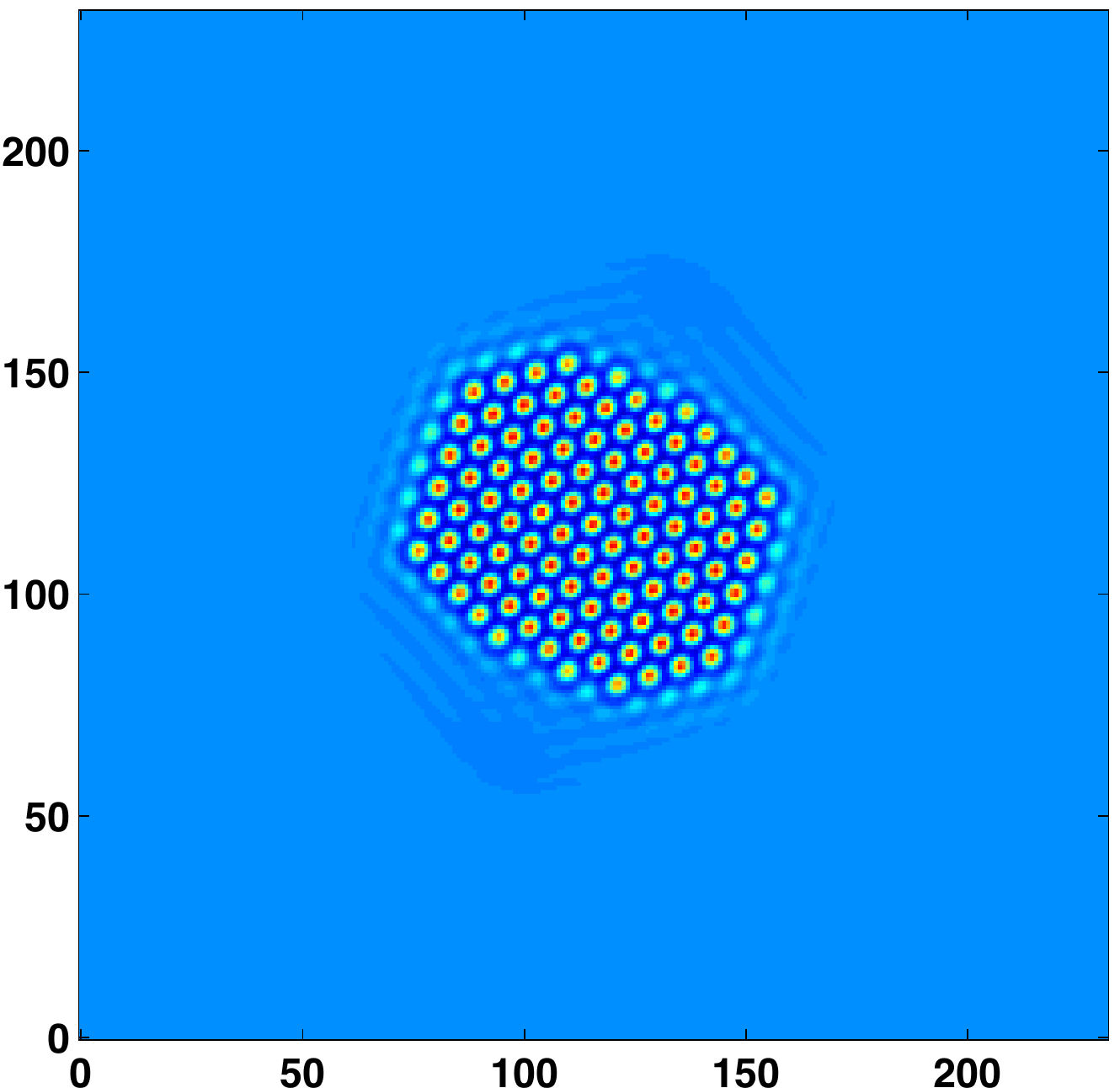}}
\put(0,260){\includegraphics[scale=0.34]{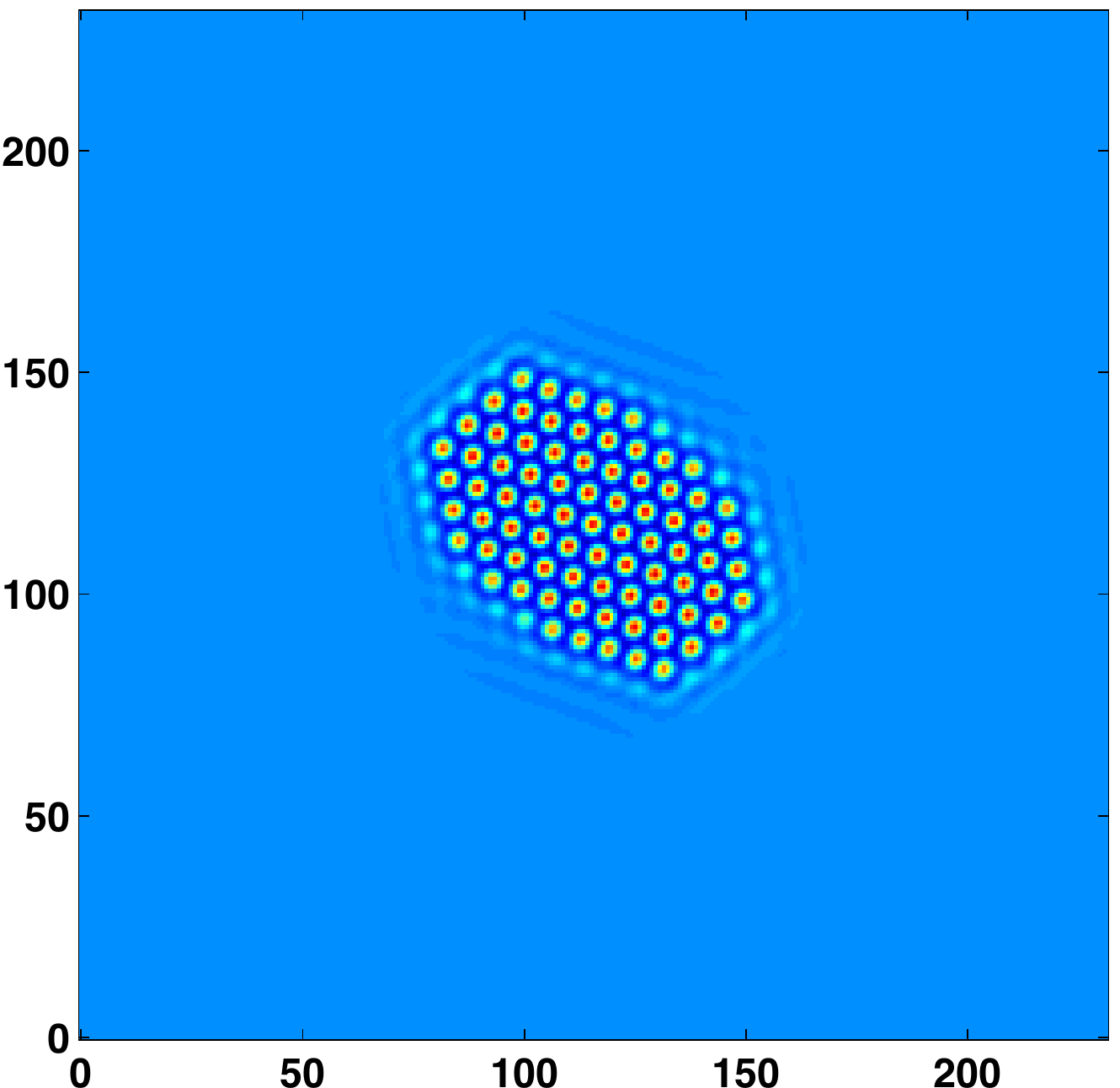}}
\put(130,260){\includegraphics[scale=0.34]{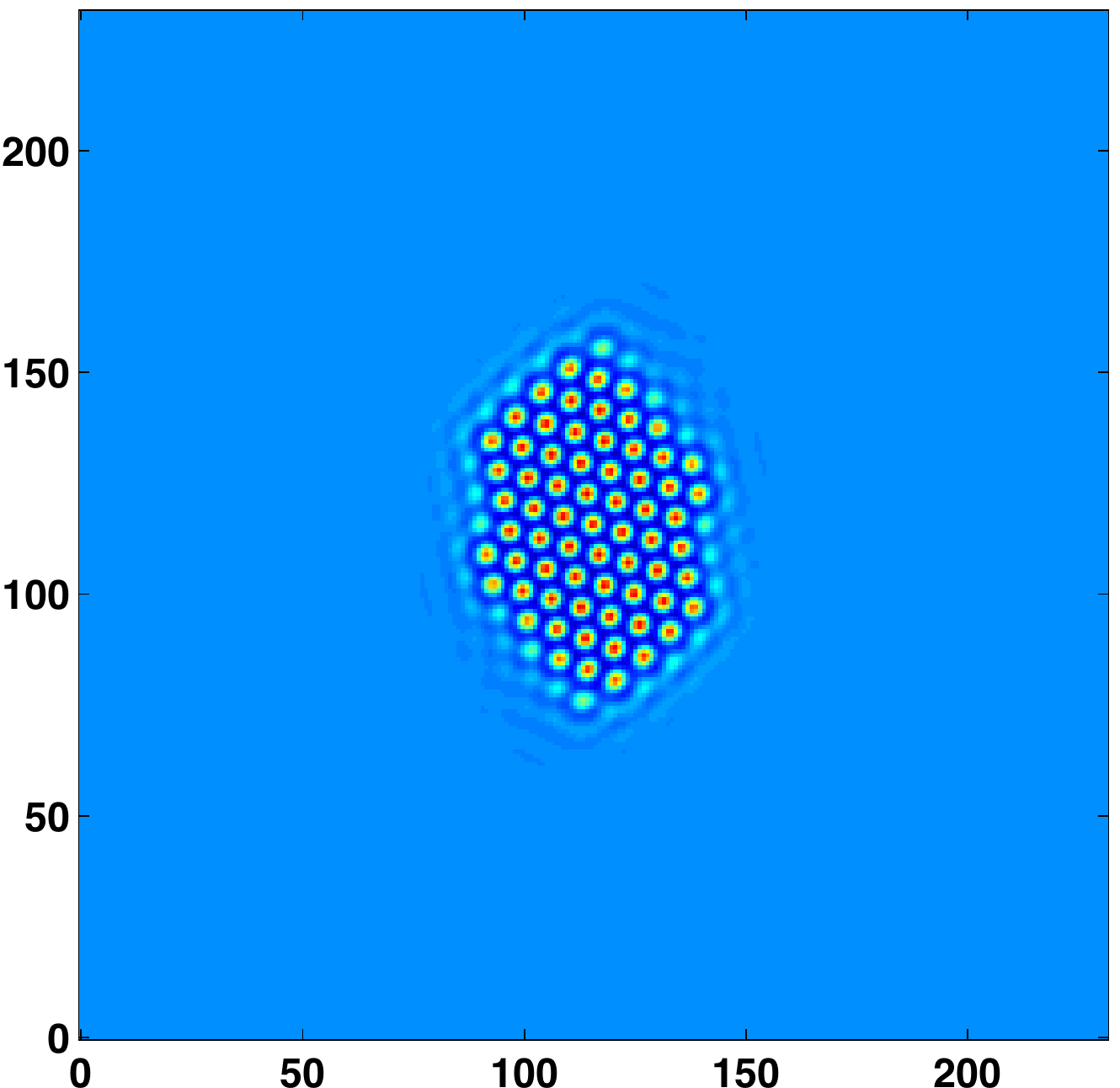}}
\put(260,260){\includegraphics[scale=0.34]{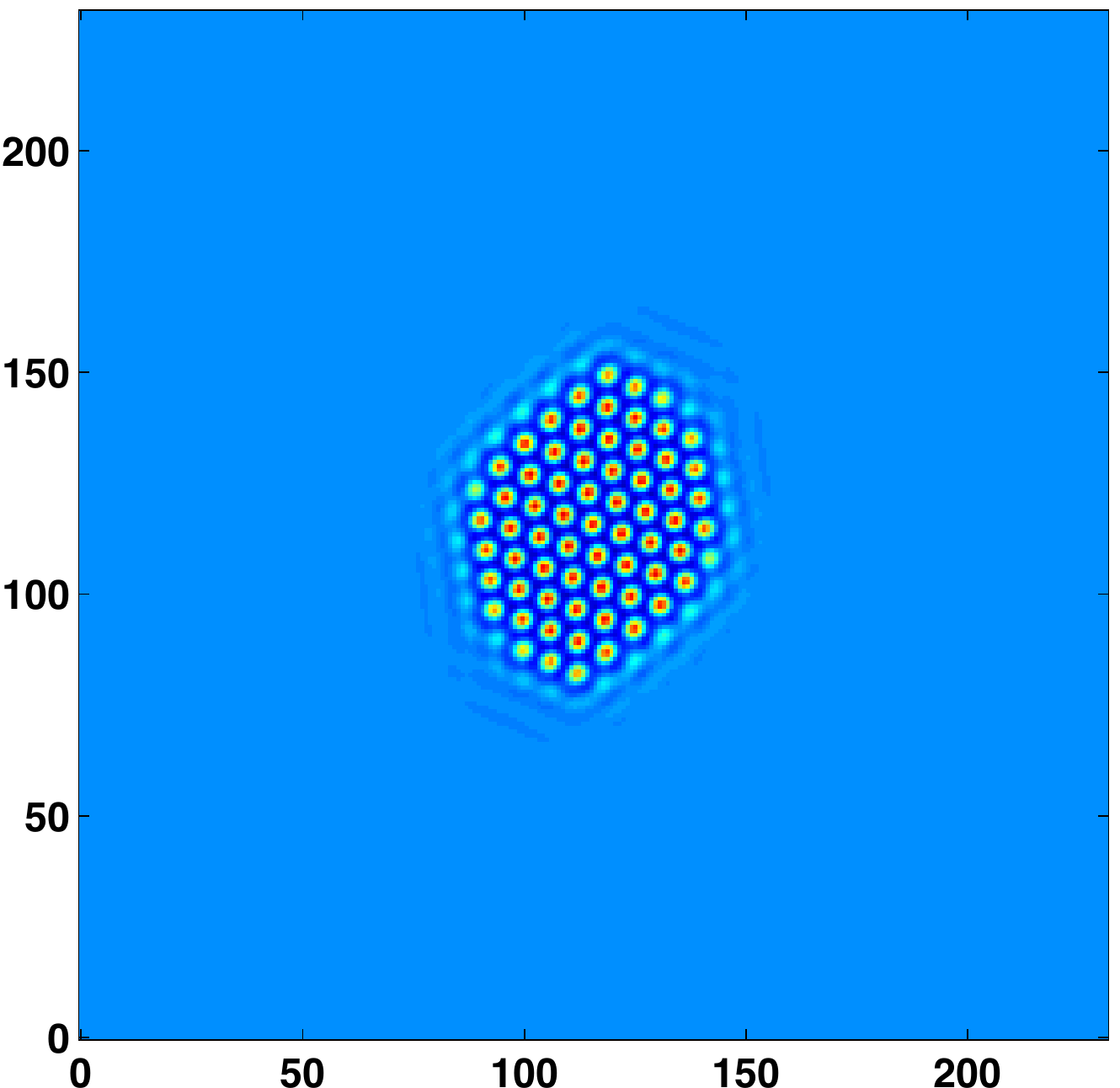}}
\put(0,130){\includegraphics[scale=0.34]{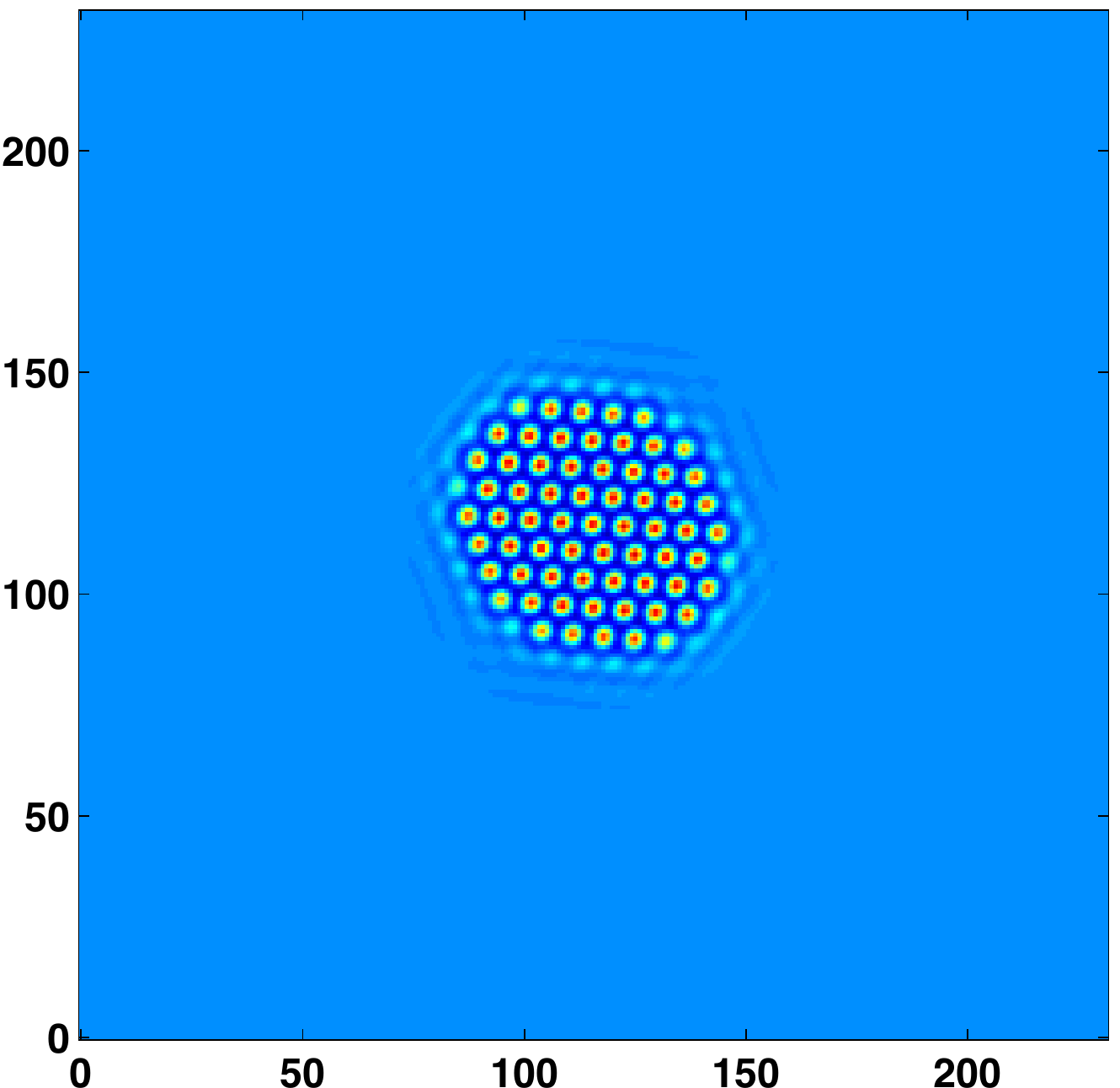}}
\put(130,130){\includegraphics[scale=0.34]{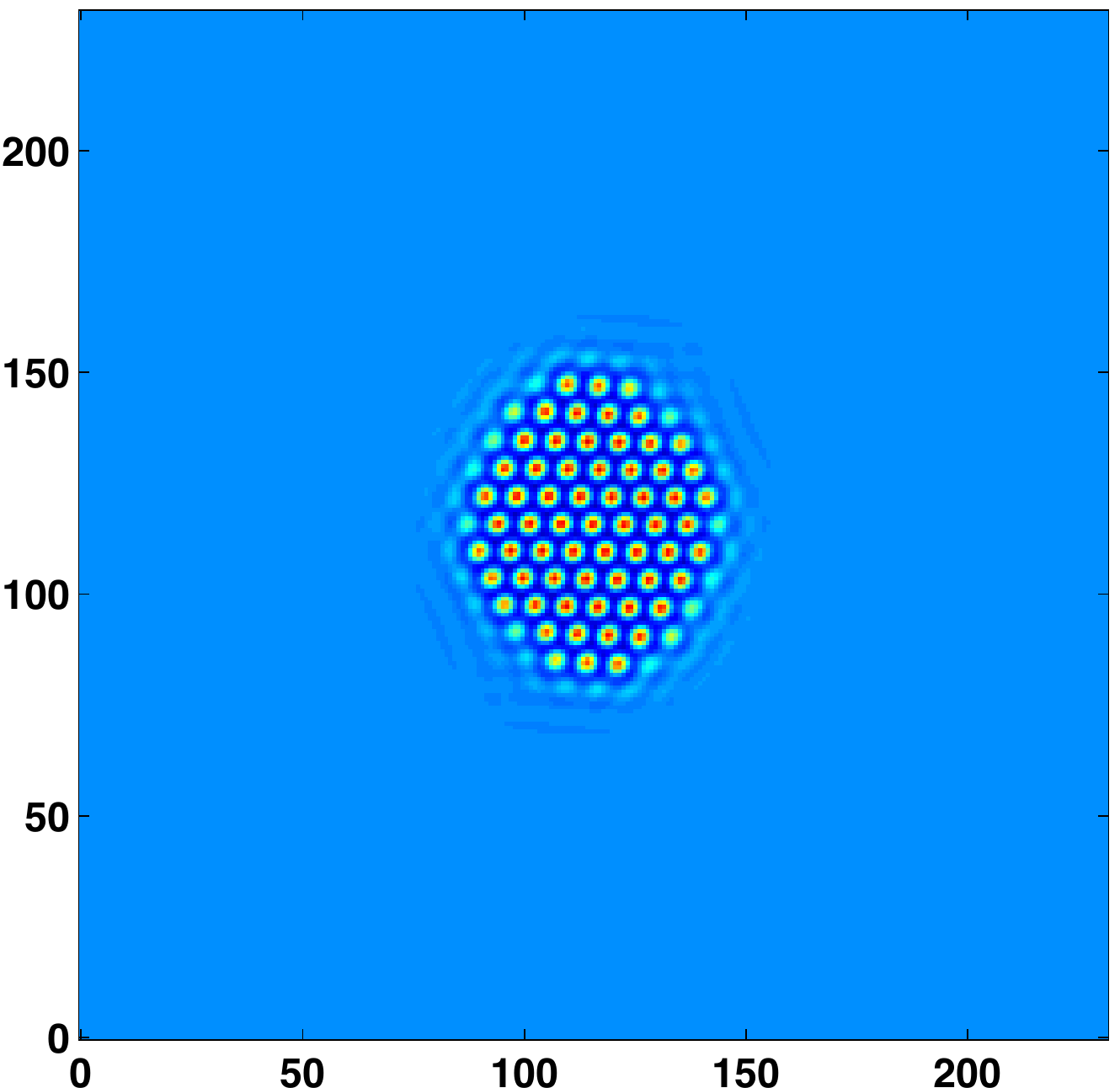}}
\put(260,130){\includegraphics[scale=0.34]{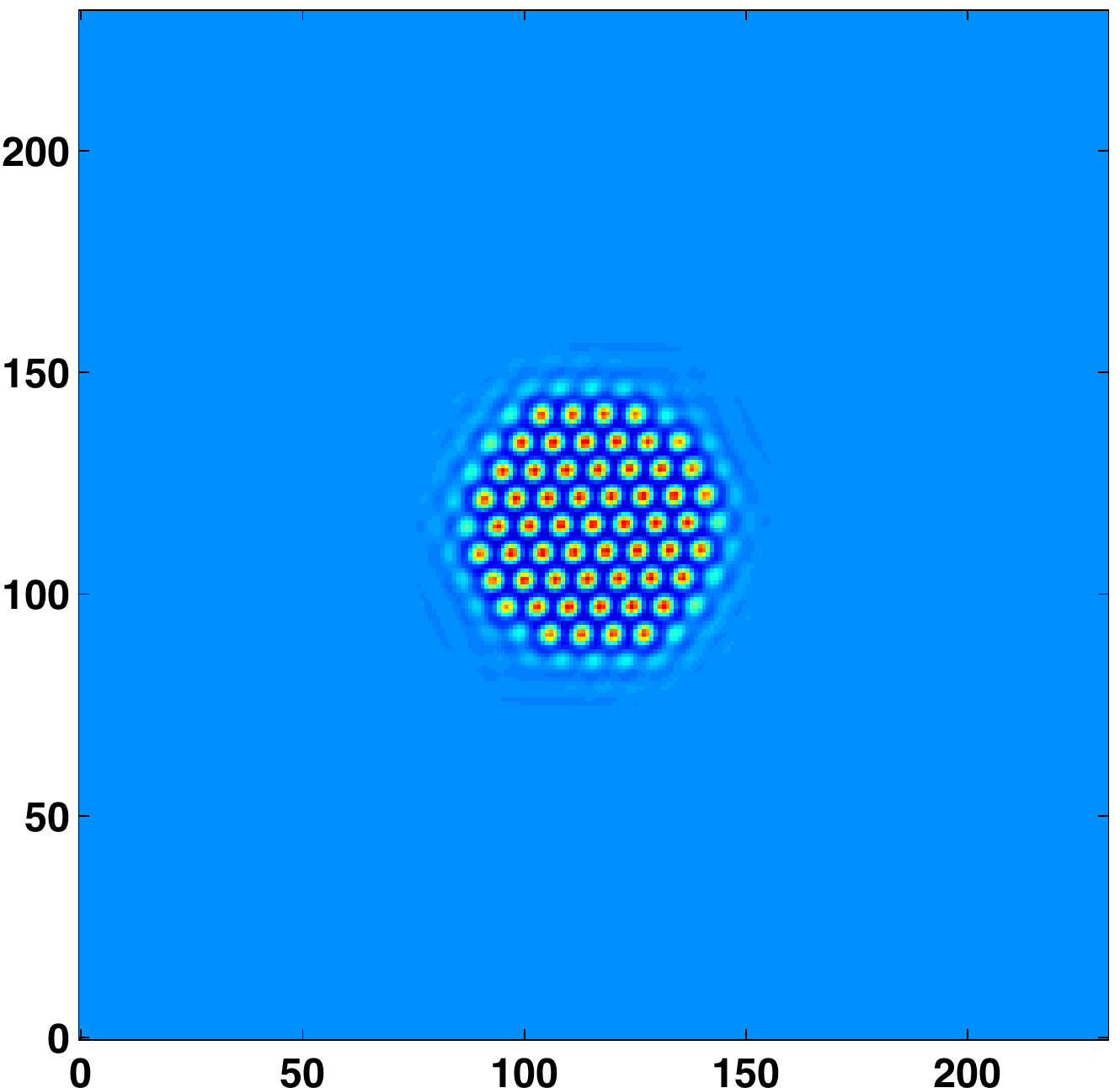}}
\put(0,0){\includegraphics[scale=0.34]{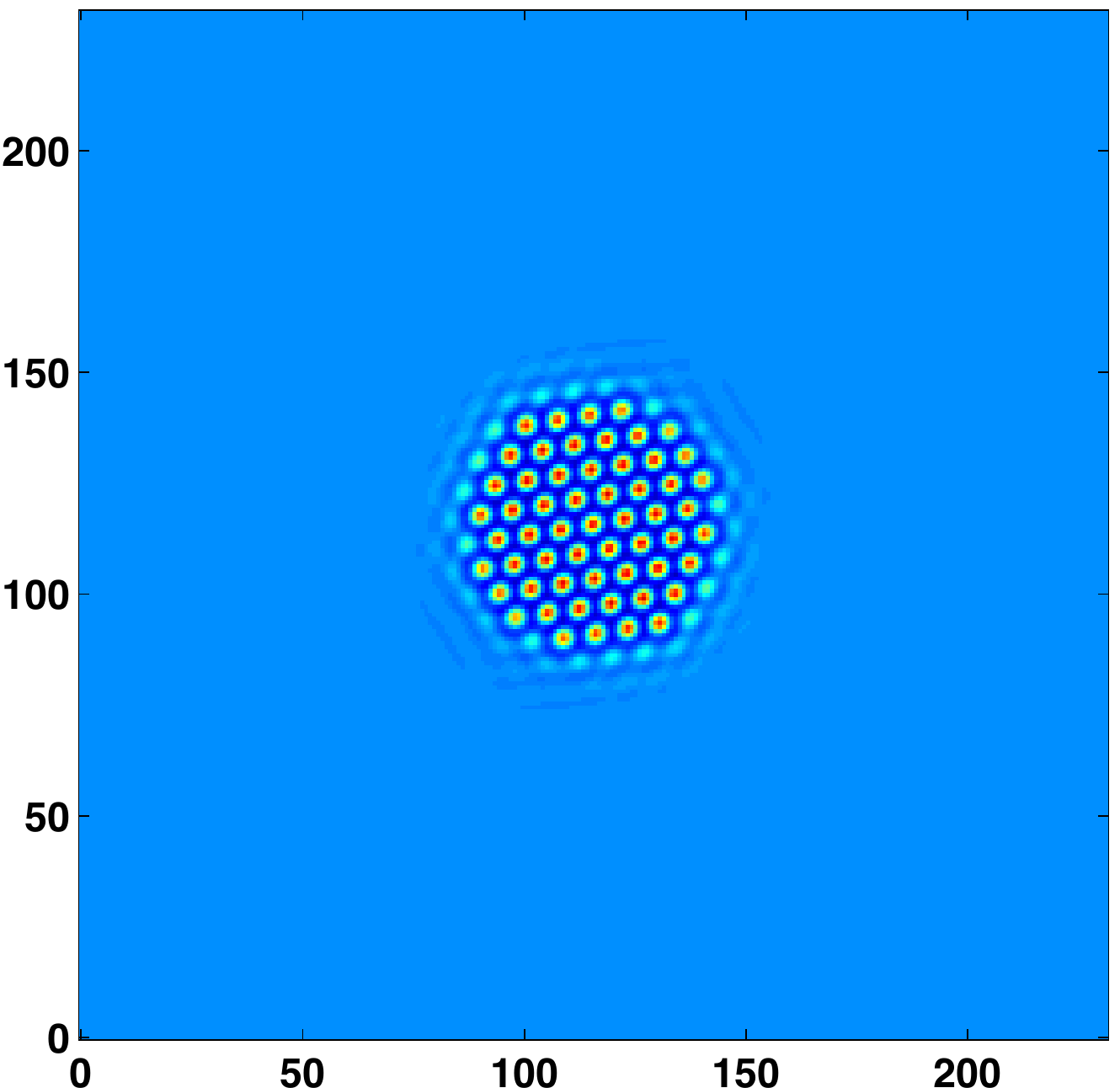}}
\put(130,0){\includegraphics[scale=0.34]{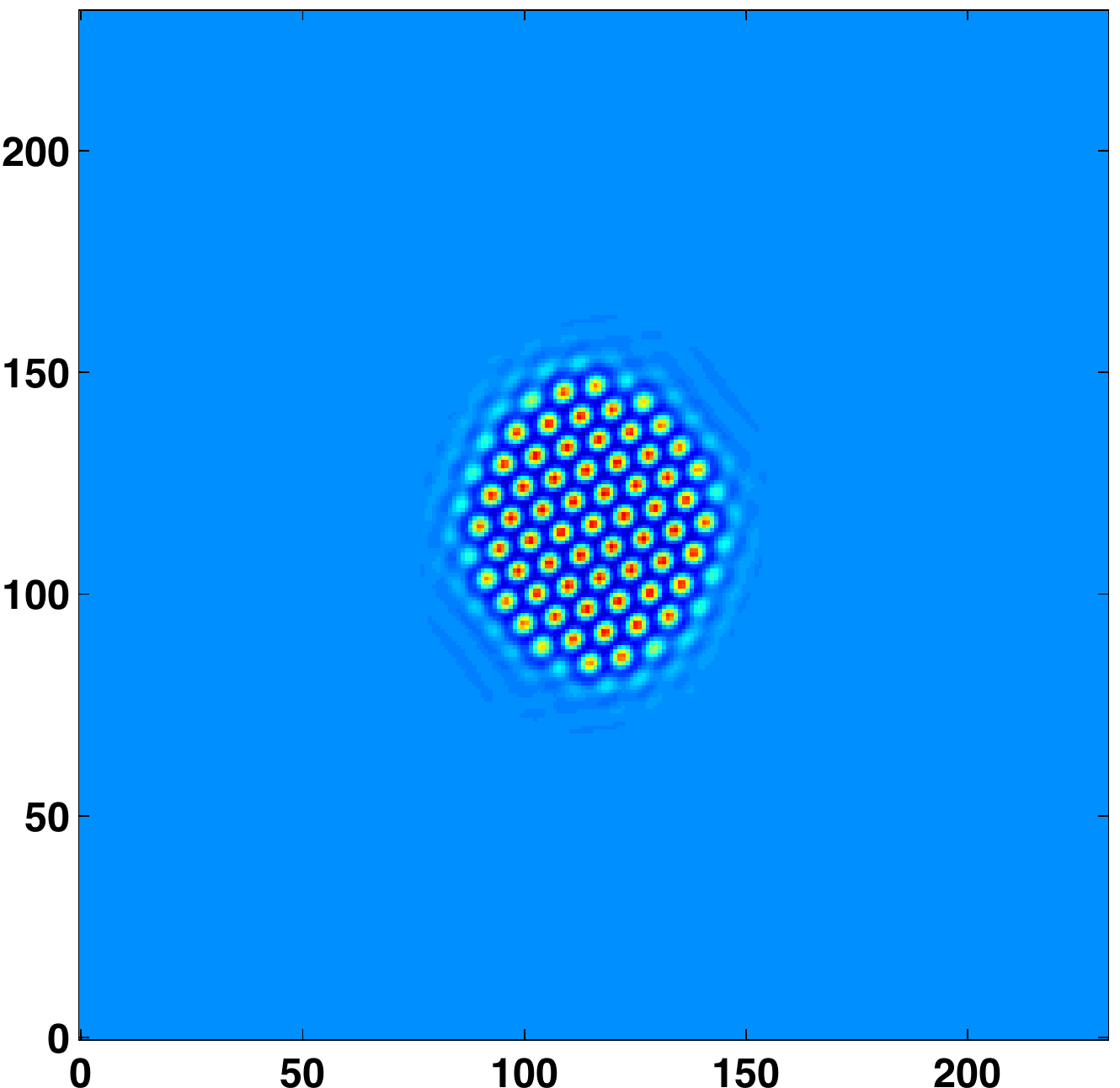}}
\put(260,0){\includegraphics[scale=0.34]{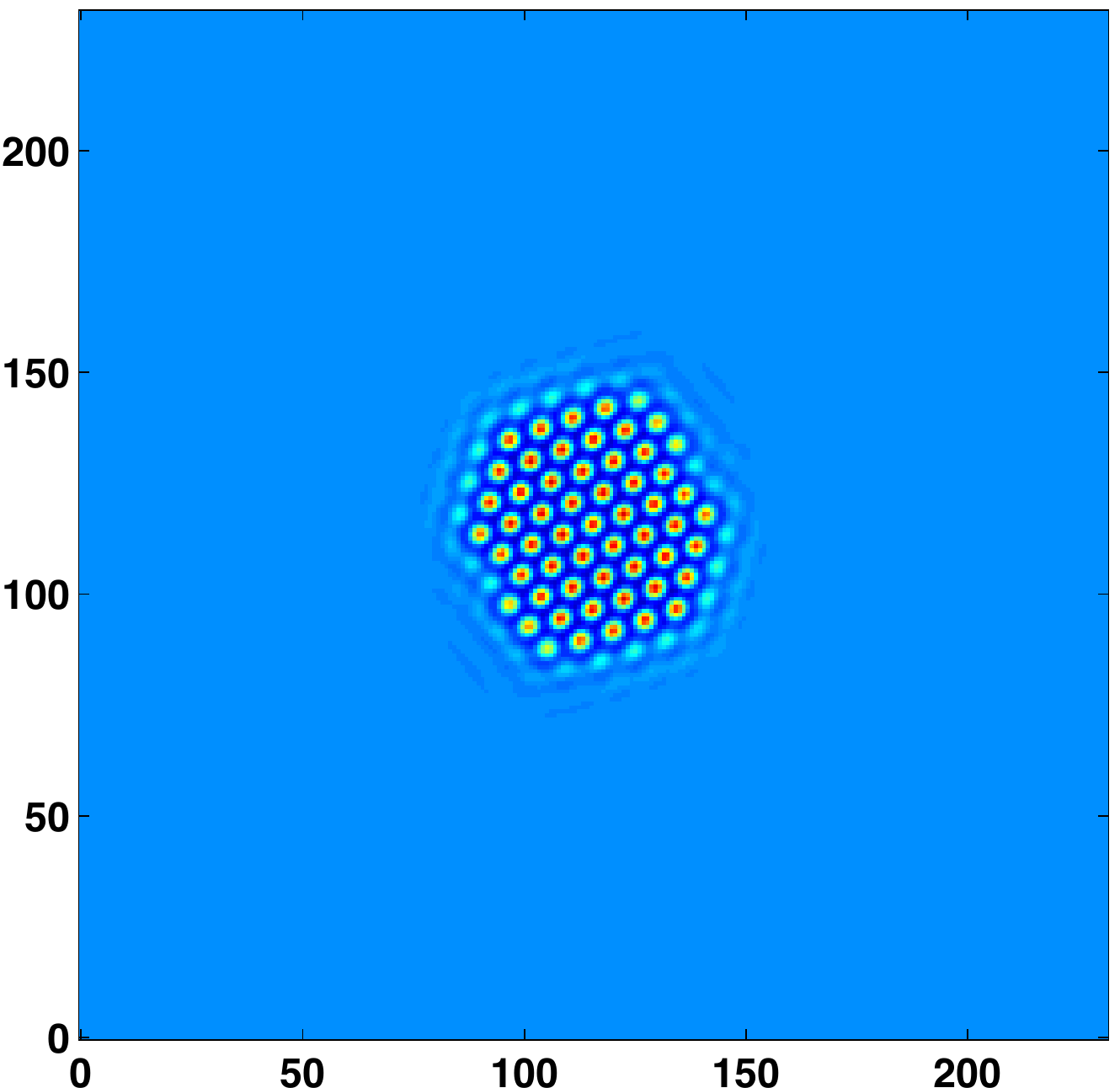}}

\put(15,400){\text{\bf t =0.0}}
\put(145,400){\text{\bf t =800.0}}
\put(275,400){\text{\bf t =1600.0}}
\put(15,270){\text{\bf t =2400.0}}
\put(145,270){\text{\bf t =3200.0}}
\put(275,270){\text{\bf t =4000.0}}
\put(15,140){\text{\bf t =4800.0}}
\put(145,140){\text{\bf t =5600.0}}
\put(275,140){\text{\bf t =6400.0}}
\put(15,10){\text{\bf t =7200.0}}
\put(145,10){\text{\bf t =8000.0}}
\put(275,10){\text{\bf t =8800.0}}

\end{picture}
\end{center}
\caption{Snapshots of the density field at different times are shown as a nanocrystal is sheared in a driven channel with wall speed $u_{wall,n/0} =\pm0.5$
as described in Section  \ref{flow_PFC}.
The initial conditions correspond to a system at equilibrium with $\bu =0$ and $u_{wall,0/n} =0$ (Fig. \ref{fig_nucleate}).
The figure shows that the crystal initially shrinks to a smaller size and then stabilizes in size as it tumbles in the channel.
 The colors (available on-line) correspond to a linear RGB scheme from 0 to 2.9 as shown in Figure \ref{fig_nucleate}.}
\label{fig_shear2}
\end{figure}

\end{document}